\def\lra{\rightarrow}
\def\lra{\longrightarrow}
\def\lmapsto{\longmapsto}
 \def\sS{\mathscr{S}}
\def\bbC{\mathbb{C}}
\def\bbF{\mathbb{F}}
\def\bbQ{\mathbb{Q}}\def\bbR{\mathbb{R}}
\def\bbZ{\mathbb{Z}}
\def\cC{\mathcal{C}}
\def\cK{\mathcal{K}}\def\cL{\mathcal{L}}
\def\cO{\mathcal{O}}\def\cP{\mathcal{P}}
\def\cS{\mathcal{S}}
\def\bfB{\mathbf{B}}\def\bfC{\mathbf{C}}
\def\bfG{\mathbf{G}}
\def\bfK{\mathbf{K}}
\def\bfM{\mathbf{M}}\def\bfN{\mathbf{N}}\def\bfP{\mathbf{P}}
\def\bfT{\mathbf{T}}
\def\bfU{\mathbf{U}}
\def\bfZ{\mathbf{Z}}
\def\ff{\mathfrak{f}}
\def\fn{\mathfrak{n}}
\def\ft{\mathfrak{t}}
\def\fN{\mathfrak{N}}
\newcommand{\nD}{{\textnormal{D}}}
\newcommand{\nG}{{\textnormal{G}}}
\newcommand{\nL}{{\textnormal{L}}}
\newcommand{\nR}{{\textnormal{R}}}
\newcommand{\nS}{{\textnormal{S}}}
\DeclareMathOperator{\charr}{char}
\DeclareMathOperator{\cind}{c-ind}
\DeclareMathOperator{\cd}{cd}
\DeclareMathOperator{\End}{End}
\DeclareMathOperator{\Ext}{Ext}
\DeclareMathOperator{\Fil}{Fil}
\DeclareMathOperator{\fin}{finite}
\DeclareMathOperator{\Gal}{Gal}
\DeclareMathOperator{\gr}{gr}
\DeclareMathOperator{\Hom}{Hom}
\DeclareMathOperator{\hgt}{\textnormal{ht}}
\DeclareMathOperator{\Id}{Id}
\DeclareMathOperator{\Ind}{Ind}
\DeclareMathOperator{\ind}{ind}
\DeclareMathOperator{\Rep}{Rep}
\DeclareMathOperator{\Res}{Res}
\DeclareMathOperator{\RHom}{RHom}
\DeclareMathOperator{\uRHom}{\underline{RHom}}
\def\lan{\langle}
\def\lran{\rangle}
\newtheorem{counter}[subsection]{$\!\!$}
\newtheorem{counter*}[subsubsection]{$\!\!$}
\newenvironment{Def*}{\begin{counter*} {\bf Definition.}}{\end{counter*}}
\newenvironment{Not*}{\begin{counter*} \rm {\bf Notation.}}{\end{counter*}}
\newenvironment{Notss*}{\begin{counter*} \rm {\bf Notations.}}{\end{counter*}}
\newenvironment{DefNot*}{\begin{counter*} \rm {\bf Definition-Notation.}}{\end{counter*}}
\newenvironment{Nots*}{\begin{counter*} \rm {\bf Notations.}}{\end{counter*}}
\newenvironment{Prop*}{\begin{counter*} {\bf Proposition.}}{\end{counter*}}
\newenvironment{Lem*}{\begin{counter*} {\bf Lemma.}}{\end{counter*}}
\newenvironment{Cor*}{\begin{counter*} {\bf Corollary.}}{\end{counter*}}
\newenvironment{Th*}{\begin{counter*} {\bf Theorem.}}{\end{counter*}}
\newenvironment{Rem*}{\begin{counter*} \rm {\bf Remark.}}{\end{counter*}}
\newenvironment{Ex*}{\begin{counter*} \rm {\bf Example.}}{\end{counter*}}
\newenvironment{Exs*}{\begin{counter*} \rm {\bf Examples.}}{\end{counter*}}
\newenvironment{Pt*}{\begin{counter*} \rm}{\end{counter*}}
\newenvironment{Q*}{\begin{counter*} \rm {\bf Question.}}{\end{counter*}}
\newenvironment{Qs*}{\begin{counter*} \rm {\bf Questions.}}{\end{counter*}}
\newenvironment{Assn*}{\begin{counter*} {\bf Assumption.}}{\end{counter*}}
\newenvironment{Const*}{\begin{counter*} \rm {\bf Construction.}}{\end{counter*}}
\newtheorem*{Claim}{Claim}
\newcommand*{\longhookrightarrow}{\ensuremath{\lhook\joinrel\relbar\joinrel\rightarrow}}
\newcommand*{\longtwoheadrightarrow}{\ensuremath{\relbar\joinrel\twoheadrightarrow}}
\title{\textbf{Derived Satake morphisms for $p$-small weights in characteristic $p$}}
\author{Karol Kozio{\l} and C\'{e}dric P\'{e}pin}
\date{\today}
\begin{document}

\maketitle


\abstract{Let $F$ be a finite unramified extension of $\mathbb{Q}_p$ with ring of integers $\mathcal{O}_F$, and let $\mathbf{G}$ denote a split, connected reductive group over $\mathcal{O}_F$.  We fix a Borel subgroup $\mathbf{B} = \mathbf{T}\mathbf{U}$ with maximal torus $\mathbf{T}$ and unipotent radical $\mathbf{U}$, and let $L(\lambda)$ denote an irreducible representation of $\mathbf{G}(\mathcal{O}_F)$ with coefficients in a sufficiently large field of characteristic $p$.  

Under the assumption that $\lambda$ is a $p$-small and sufficiently regular character and that $p$ is greater than 1 plus the Coxeter number of $\mathbf{G}$, we show that the complex $L(\mathbf{U}(F),\textrm{c-ind}_{\mathbf{G}(\mathcal{O}_F)}^{\mathbf{G}(F)}(L(\lambda)))$ splits as the orthogonal direct sum of its cohomology objects in the derived category of smooth $\mathbf{T}(F)$-representations in characteristic $p$.  (Here $L(\mathbf{U}(F), -)$ denotes Heyer's left adjoint of parabolic induction, from the derived category of smooth $\mathbf{G}(F)$-representations to the derived category of smooth $\mathbf{T}(F)$-representations.)  Consequently, this gives rise to a collection of morphisms of graded spherical Hecke algebras
\begin{flushleft}
$\displaystyle{\bigoplus_{i \in \mathbb{Z}}\textrm{Ext}_{\mathbf{G}(F)}^{i}\left(\textrm{c-ind}_{\mathbf{G}(\mathcal{O}_F)}^{\mathbf{G}(F)}(L(\lambda)),~\textrm{c-ind}_{\mathbf{G}(\mathcal{O}_F)}^{\mathbf{G}(F)}(L(\lambda))\right)}$
\end{flushleft}
\begin{flushright}
$\displaystyle{\longrightarrow \bigoplus_{i \in \mathbb{Z}}\textrm{Ext}_{\mathbf{T}(F)}^{i}\left(\textrm{c-ind}_{\mathbf{T}(\mathcal{O}_F)}^{\mathbf{T}(F)}(L^n(\mathbf{U}(\mathcal{O}_F),L(\lambda))),~\textrm{c-ind}_{\mathbf{T}(\mathcal{O}_F)}^{\mathbf{T}(F)}(L^n(\mathbf{U}(\mathcal{O}_F),L(\lambda)))\right)}$
\end{flushright}
indexed by $n=-[F:\mathbb{Q}_p]\dim(\mathbf{U}), \ldots, 0$, which we refer to as derived Satake morphisms. For $\lambda=0$ and $n=0$, this recovers the graded mod $p$ Satake homomorphism constructed by Ronchetti.

We also give some partial results for general standard parabolic subgroups $\mathbf{P} = \mathbf{M}\mathbf{N} \subset \mathbf{G}$.
 }

\tableofcontents

\section{Introduction}

\subsection{Setting and results}

\begin{Pt*}
This article is motivated by the mod $p$ Local Langlands Program.  Specifically, we study the mod $p$ representation theory of $p$-adic reductive groups, through the viewpoint of derived Satake morphisms.  To describe the setting, let $F$ be a locally compact nonarchimedean field of residual characteristic $p$, with ring of integers $\cO_F$. We let $\bfG$ be a split connected reductive group over $\cO_F$, write $G:=\bfG(F)$ for the locally profinite group of its $F$-points, and $G_0\subset G$ for the compact open subgroup $\bfG(\cO_F)\subset \bfG(F)$. Further, we fix a Borel subgroup $\bfB\subset \bfG$, with unipotent radical $\bfU$, and maximal torus $\bfT$ satisfying $\bfB = \bfT \bfU$; write  $B=TU$ and $B_0=T_0U_0$ for the corresponding groups of rational and integral points.

The classical Satake morphism is the map
\begin{eqnarray*}
\mathbb{C}[G_0\backslash G/ G_0] & \lra & \mathbb{C}[T/T_0] \\
\kappa & \lmapsto & \bigg(t\longmapsto \delta_B(t)^{\frac{1}{2}}\sum_{u\in U/U_0} \kappa(tu)\bigg)
\end{eqnarray*}
where $\delta_B:B\longrightarrow \mathbb{R}_+^{\times}$ is the modulus character. The source and the target are the $\mathbb{C}$-vector spaces freely generated by the displayed cosets; they are $\mathbb{C}$-algebras with respect to the convolution product, and the Satake map is a morphism of $\mathbb{C}$-algebras.  This morphism is fundamental in constructing \textit{unramified} instances of the Local Langlands Correspondence (see \cite[\S 4.2]{C79}).

For a coefficient field $k$ other than $\mathbb{C}$, the above formula still makes sense after omitting the character $\delta_B^{\frac{1}{2}}$; in fact, the map
\begin{eqnarray*}
\mathbb{Z}[G_0\backslash G/ G_0] & \lra & \mathbb{Z}[T/T_0] \\
\kappa & \lmapsto & \bigg(t\longmapsto \sum_{u\in U/U_0} \kappa(tu)\bigg)
\end{eqnarray*}
is a morphism of rings. This was first observed by Herzig \cite{herzig:satake}, who studied the morphism of $\overline{\bbF}_p$-algebras resulting from the scalar extension $\bbZ\longrightarrow\overline{\bbF}_p$, in the case $\charr(F)=0$, and called it the \emph{mod $p$ Satake morphism}. The morphism for an arbitrary coefficient field $k$ of characteristic $p$, and with $\charr(F)=p$ allowed, was studied by Henniart--Vignéras \cite{HV15}. 
\end{Pt*}

\begin{Pt*}
Given a coefficient field $k$, the interest in the convolution algebra $k[G_0\backslash G/ G_0]$ is that it realizes all the endomorphisms of the smooth representation $\cind_{G_0}^G(1_{G_0})$ of $G$, compactly induced from the trivial representation $1_{G_0}$ of $G_0$ over $k$.  Namely, evaluation on the characteristic function of $G_0$ defines an isomorphism $(\End_G(\cind_{G_0}^G(1_{G_0})),\circ) \stackrel{\sim}{\longrightarrow} (k[G_0\backslash G/ G_0],\star)$.  On the other hand, if $k$ has characteristic $p$, then the $G$-representation $\cind_{G_0}^G(1_{G_0})$ admits \emph{higher} endomorphisms: if $\charr(F)=0$ and $G_0$ is $p$-torsion-free, the graded $k$-algebra
$$
\Ext_G^{\bullet}\left(\cind_{G_0}^G(1_{G_0}),~\cind_{G_0}^G(1_{G_0})\right) := \bigoplus_{i\in\bbZ}\Ext_G^i\left(\cind_{G_0}^G(1_{G_0}),~\cind_{G_0}^G(1_{G_0})\right),
$$
equipped with the Yoneda product, is concentrated in degrees $[0,[F:\bbQ_p]\dim(\bfG)]$ and nonzero in degree $[F:\bbQ_p]\dim(\bfG)$. 
\end{Pt*}

\begin{Pt*}
\emph{From now on, we fix a coefficient field $k$ of characteristic $p$}.

In the case $\bfG=\bfT$ is a torus, the graded $k$-algebra above is completely understood, namely
$$
\Ext_T^{\bullet}\left(\cind_{T_0}^T(1_{T_0}),~\cind_{T_0}^T(1_{T_0})\right) \cong k[T/T_0]\otimes_k \sideset{}{^{\bullet}_k}{\bigwedge}\Hom_{\textnormal{cts}}(T_1,k)
$$
where $T_1$ is the pro-$p$-Sylow subgroup of $T_0$. Consequently, for general $G$, it is desirable to relate the graded algebra for $G$ to the one for $T$, that is, to extend the mod $p$ Satake morphism  
$$
\Ext_G^0\left(\cind_{G_0}^G(1_{G_0}),~\cind_{G_0}^G(1_{G_0})\right)\lra \Ext_T^0\left(\cind_{T_0}^T(1_{T_0}),~\cind_{T_0}^T(1_{T_0})\right)
$$
in degree $0$ to a morphism between the full graded algebras.

By definition, the map in degree $0$
\begin{eqnarray*}
k[G_0\backslash G/ G_0] & \lra & k[T/T_0] \\
\kappa & \lmapsto & \bigg(t\longmapsto \sum_{u\in U/U_0} \kappa(tu)\bigg)
\end{eqnarray*}
is summing the $G_0$-bi-invariant \emph{compactly supported kernels} on $G$ along the $U_0$-orbits in $B=TU$ relative to $T$. Hence, in terms of representations, it involves the \emph{homology} functor of the locally profinite group $U$. Now, the relevant left derivatives of this right exact functor have been constructed by Heyer \cite{H22}, when $\charr(F)=0$. Namely, denote by $\nD(H)$ the (triangulated) unbounded derived category of smooth $k$-representations of a locally profinite group $H$. Then:

\begin{Th*} \textbf{\emph{(Heyer, \cite{H22})}}
Let $F/\bbQ_p$ be a finite extension. The $t$-exact parabolic induction functor $\Ind_B^G:\nD(T)\longrightarrow \nD(G)$ admits a left adjoint 
$$
L(U,-):\nD(G) \lra \nD(T).
$$
Moreover, there is a natural isomorphism $h^0(L(U,\cind_{G_0}^G(1_{G_0})))\cong \cind_{T_0}^T(1_{T_0})$, and the composed morphism of 
$k$-algebras
$$
\End_G\left(\cind_{G_0}^G(1_{G_0})\right) \xrightarrow{L(U,-)} \End_{\nD(T)}\left(L(U,\cind_{G_0}^G(1_{G_0}))\right)  \xrightarrow{h^0(-)} \End_T\left(\cind_{T_0}^T(1_{T_0})\right)
$$
coincides with the mod $p$ Satake morphism of Herzig \cite{herzig:satake}.
\end{Th*}
\end{Pt*}

By applying the functors $h^n(-):\nD(T)\longrightarrow \nD(T)^{\heartsuit}$ for all $n\in\bbZ$ instead of only $n=0$, one can even get a collection of morphisms 
$$
\End_G\left(\cind_{G_0}^G(1_{G_0})\right) \lra \End_T\left(\cind_{T_0}^T\big(h^n(L(U_0,1_{G_0}))\big)\right),
$$
where $L(U_0,-):\nD(G_0) \lra \nD(T_0)$ is the left adjoint of $\Ind_{B_0}^{G_0}$, the parabolic induction at compact level. However, the source of these morphisms is still the classical degree $0$ endomorphism algebra (and the target is still a degree $0$ algebra).

\begin{Pt*}
Set $L^n(U,-) := h^n\circ L(U,-):\nD(G)\longrightarrow \nD(T)^{\heartsuit}$ for any $n\in\bbZ$. Our basic result is the following.  (In the body of the article, we prove more general versions with non-trivial weights, and with larger Levi subgroups; see Subsections \ref{intro:weight} and \ref{intro:Levi} below.)

\begin{Th*}\label{basic}
Assume:
    \begin{itemize}[$\diamond$]
    \item $F$ is unramified over $\bbQ_p$, and
    \item $p > h + 1$, where $h$ denotes the maximum of the Coxeter numbers of the irreducible components of the root system of $\bfG$.
    \end{itemize}
Then the following orthogonality relations hold:  for all $m,n\in \mathbb{Z}$, $m\neq n$, we have
$$
\RHom_T\left(L^m(U,\cind_{G_0}^G(1_{G_0})),~L^n(U,\cind_{G_0}^G(1_{G_0}))\right)=0.
$$
Consequently, $L(U,\cind_{G_0}^G(1_{G_0}))\in \nD(T)$ decomposes as
$$
L\big(U,\cind_{G_0}^G(1_{G_0})\big) \cong \bigoplus_{n=-[F:\bbQ_p]\dim(\bfU)}^0L^n\big(U,\cind_{G_0}^G(1_{G_0})\big)[-n],
$$
and
\begin{flushleft}
$\displaystyle{\RHom_{T}\left(L(U,\cind_{G_0}^G(1_{G_0})),~L(U,\cind_{G_0}^G(1_{G_0}))\right)}$
\end{flushleft} 
\begin{flushright}
$\displaystyle{\cong \bigoplus_{n=-[F:\bbQ_p]\dim(\bfU)}^0\RHom_{T}\left(L^n(U,\cind_{G_0}^G(1_{G_0})),~L^n(U,\cind_{G_0}^G(1_{G_0}))\right).}$
\end{flushright}
\end{Th*}

In particular, under the assumptions of the Theorem, there is a canonical projection 
$$
\RHom_{T}\left(L(U,\cind_{G_0}^G(1_{G_0})),~L(U,\cind_{G_0}^G(1_{G_0}))\right)\lra \RHom_{T}\left(L^n(U,\cind_{G_0}^G(1_{G_0})),~L^n(U,\cind_{G_0}^G(1_{G_0}))\right)
$$
for each $n$. Precomposing with the map 
$$
\RHom_{G}\left(\cind_{G_0}^G(1_{G_0}),~\cind_{G_0}^G(1_{G_0})\right)\lra \RHom_{T}\left(L(U,\cind_{G_0}^G(1_{G_0})),~L(U,\cind_{G_0}^G(1_{G_0}))\right)
$$
induced by the functor $L(U,-):\nD(G)\longrightarrow\nD(T)$, one gets maps
$$
\RHom_{G}\left(\cind_{G_0}^G(1_{G_0}),~\cind_{G_0}^G(1_{G_0})\right)\lra \RHom_{T}\left(L^n(U,\cind_{G_0}^G(1_{G_0})),~L^n(U,\cind_{G_0}^GL(1_{G_0}))\right).
$$
Finally, again one can rewrite $L^n(U,\cind_{G_0}^G(1_{G_0}))$ as $\cind_{T_0}^T(L^n(U_0,1_{G_0}))$, and $L^0(U_0,1_{G_0})=1_{T_0}$. Whence the following definition.

\begin{Def*}\label{basic:def}
Suppose assumptions of Theorem \ref{basic} hold. 

The morphism of graded $k$-algebras
$$
\Ext_{G}^{\bullet}\left(\cind_{G_0}^G(1_{G_0}),~\cind_{G_0}^G(1_{G_0})\right)\lra \Ext_{T}^{\bullet}\left(\cind_{T_0}^T(1_{T_0}),~\cind_{T_0}^T(1_{T_0})\right)
$$
induced by $L(U,-):\nD(G)\longrightarrow\nD(T)$ is called the \emph{graded mod $p$ Satake morphism}.

More generally, for any $n\in [-[F:\bbQ_p]\dim(\bfU),0]$, the morphism of graded $k$-algebras
$$
\Ext_{G}^{\bullet}\left(\cind_{G_0}^G(1_{G_0}),~\cind_{G_0}^G(1_{G_0})\right)\lra \Ext_{T}^{\bullet}\left(\cind_{T_0}^T(L^n(U_0,1_{G_0})),~\cind_{T_0}^T(L^n(U_0,1_{G_0}))\right)
$$
induced by $L(U,-):\nD(G)\longrightarrow\nD(T)$ is called the \emph{$n^{\textrm{th}}$ graded mod $p$ Satake morphism}.
\end{Def*}

In the case $n=0$, a similar morphism has been constructed by Ronchetti \cite{R19b}, and the morphism of the above definition coincides with that of \textit{op. cit.} (see Subsection \ref{link-Ronchetti}). 
\end{Pt*}

\begin{Pt*}
The study of the complex  $L(U,\cind_{G_0}^G(1_{G_0}))\in \nD(T)$ reduces to the compact level $U_0\subset U$, thanks to the quasi-isomorphism $L(U,\cind_{G_0}^G(1_{G_0}))\cong\cind_{T_0}^T(L(U_0,1_{G_0}))$. Then the functor $L(U_0,-):\nD(G_0)\longrightarrow \nD(T_0)$ can be computed using \emph{co}homology of the profinte group $U_0$; more precisely, we have:
$$
L(U_0,-) \cong \textnormal{R}H^0(U_0,-)\big[[F:\bbQ_p]\dim(\bfU)\big] \otimes_k \delta_{B_0}.
$$
The character $\delta_{B_0}:B_0\longrightarrow \bbF_p^\times\subset k^{\times}$ is the \emph{mod $p$ modulus character}, given by $N_{k_F/\bbF_p}(\overline{2\rho})$ where $k_F/\bbF_p$ is the residue field extension of $F/\bbQ_p$ and $\overline{2\rho}$ denotes the mod $p$ reduction of the algebraic character given by the sum of the positive roots relative to $\mathbf{B}$ (see Subsection \ref{formula-spherical}).

The key decomposition property of the complex  $L(U,\cind_{G_0}^G(1_{G_0}))\in \nD(T)$ already holds for the complex  
$\textnormal{R}H^0(U_0,k)\in \nD(T_0)$. To prove the latter, we actually prove that the $T_0$-representation 
$\bigoplus_{n\in\bbZ}H^n(U_0,k)$ is a multiplicity-free sum of characters. By contrast, we note that for a \emph{ramified} finite extension $F/\bbQ_p$, the $T_0$-representation $H^1(U_0,k)$ may be a non-trivial extension of two copies of the same character (see Remark \ref{ramified}). 
\end{Pt*}

\begin{Pt*}
The profinite group $U_0$ is pro-$p$; more precisely, any choice of a total order on the set $\Phi^+$ of positive roots determines a homeomorphism
$$
\prod_{\alpha \in \Phi^+}\bfU_\alpha(\cO_F) \stackrel{\sim}{\longrightarrow} U_0,
$$
induced by the morphisms $u_\alpha: \bfG_a \stackrel{\sim}{\longrightarrow} \bfU_\alpha$. By choosing such an order which is compatible with the height function $\hgt:\Phi^+ \longrightarrow \bbZ_{\geq 0}$, the group $U_0$ can be equipped with a $p$-valuation $\omega:U_0\longrightarrow \bbR_{> 0} \cup \{\infty\}$ in the sense of Lazard \cite{lazard}. Then $(U_0,\omega)$ is $p$-saturated, though in general it is \emph{not equi-$p$-valued}: even in the case $F=\bbQ_p$ (so that $\bfU_\alpha(\cO_F)\cong\bbZ_p$ for each $\alpha$), the basis elements $u_{\alpha}(1)\in U_0$, $\alpha\in\Phi^+$, may have distinct $p$-valuations. In particular, Lazard's general calculation of the mod $p$ cohomology of equi-$p$-valuable groups does not apply, and indeed $H^{\bullet}(U_0,k)$ is different from the exterior algebra of the dual of the mod $p$ Lie algebra associated to $(U_0,\omega)$. 

The mod $p$ cohomology and Lie algebra cohomology of $(U_0,\omega)$ are still related, however, which for general $p$-valued groups has been formalized by Sorensen \cite{sorensen:hochschild} into a spectral sequence. In the case of $(U_0,\omega)$, the sequence is $T_0$-equivariant, and we prove that it degenerates. Further, we determine the $\Res_{k_F/\bbF_p}(\bfT_{k_F})$-representations $H^n(\textnormal{Lie}(\Res_{\cO_F/\bbZ_p}(\bfU))\otimes_{\bbZ_p}k_F,k_F)$, generalizing a strategy of Polo--Tilouine \cite{polotilouine}: at least under the assumption $p > h + 1$, these $k_F$-representations of the algebraic torus $\Res_{\cO_F/\bbZ_p}(\bfT)$ have the same structure as their generic fiber $H^n(\textnormal{Lie}(\Res_{\cO_F/\bbZ_p}(\bfU))\otimes_{\bbZ_p}F,F)$, which was determined by Kostant. At this point, the proof of Theorem \ref{basic} is complete.
\end{Pt*}

\begin{Pt*}\label{intro:weight}
The computation of Polo--Tilouine holds for more general coefficients than the trivial one. Let us review it in the case $F=\bbQ_p$ for simplicity (note, however, that our results below are valid for an unramified extension $F/\bbQ_p$). Let $\lambda$ be any character of $\bfT$ which is \emph{dominant and $p$-small}, i.e. which satisfies
$$\langle \rho, \alpha^\vee \rangle\leq \langle \lambda + \rho, \alpha^\vee \rangle \leq p \quad \textnormal{for all} \quad \alpha \in \Phi^+.$$
Then the irreducible algebraic representation of $\bfG_{\overline{\bbF}_p}$ of highest weight $\lambda$ is defined over $\bbF_p$ and lifts to $\bbZ_p$, and we denote this representation by $L(\lambda)$.  We have
$$
H^n(\textnormal{Lie}(\bfU),L(\lambda)\otimes_{\bbZ_p}R)\cong \bigoplus_{\substack{w\in W\\ \ell(w)=n}}R(w\cdot\lambda)
$$
both for $R=\bbF_p$ and $R=\bbQ_p$, where $W$ is the Weyl group of $(\bfG,\bfT)$ and $\cdot$ is the dot action on the character lattice $X^*(\bfT)$. From this, we deduce a similar description of the $T_0$-representation $H^n(U_0,L(\lambda)_{k})$, namely as the sum of the smooth characters $w\cdot\lambda:T_0\longrightarrow \bbF_p^{\times}\subset k^{\times}$ obtained from the algebraic ones by restriction to $\bbZ_p$-points and reduction mod $p$. In general, while the algebraic characters $w\cdot\lambda$, $w\in W$, are always pairwise distinct, some of the resulting mod $p$ characters of $T_0$ may coincide, and the same character may occur in different degrees of the $T_0$-representation $\bigoplus_{n\in\bbZ}H^n(U_0,L(\lambda)_{k})$. However, this phenomenon occurs only for a few $\lambda$ (which can be detected by the root system when the center of $\bfG$ is connected, see Lemma \ref{split:M=T}).  For all the other $\lambda$, the complex $L(U,\cind_{G_0}^G(L(\lambda)_{k}))\in \nD(T)$ is the orthogonal direct sum of its shifted cohomology objects, as in Theorem \ref{basic} (which is the case $\lambda=0$), giving rise to $\dim(\bfU)+1$ morphisms of graded $k$-algebras as in Definition \ref{basic:def}.
\end{Pt*}

\begin{Pt*}\label{intro:Levi}
Up to here, we have discussed Satake morphisms related to a Borel subgroup $\bfB\subset\bfG$. However, the classical Satake morphism over $\mathbb{C}$ and its mod $p$ variant are naturally defined when starting more generally with a standard \emph{parabolic} subgroup with a Levi decomposition $\bfP=\bfM\bfN$. Correspondingly, in the derived mod $p$ context, Heyer has constructed the left adjoint $L(N,-)$ of the parabolic induction functor $\Ind_P^G: \nD(M)\longrightarrow\nD(G)$. In particular, we can study the complexes $L(N,\cind_{G_0}^G(L(\lambda)_{k}))\in\nD(M)$ (for $k$ equipped with a fixed embedding $k_F\longhookrightarrow k$).

The strategy explained above to compute the $T$-representations $L^n(U,\cind_{G_0}^G(L(\lambda)_{k}))$ still applies to compute the $M$-representations $L^n(N,\cind_{G_0}^G(L(\lambda)_{k}))$. That is, for $\lambda$ dominant and $p$-small, Sorensen's spectral sequence degenerates at the first page, so that the $M_0$-representations $H^n\left(N_0,L(\lambda)_{k}\right)$ can be described as the restriction to $M_0$ of the corresponding $\Res_{\cO_F/\bbZ_p}(\bfM)$-representations on Lie algebra cohomology (at least after semisimplication), which in turn are known from (the generalization of) Polo--Tilouine's computation; see Theorem \ref{groupcoh} for the precise statement. Here again the only assumptions are that $F/\bbQ_p$ is unramified and $p>h+1$.

The question of orthogonality between $L^m(N,\cind_{G_0}^G(L(\lambda)_{k}))$ and $L^n(N,\cind_{G_0}^G(L(\lambda)_{k}))$ for  $m\neq n$ is more delicate, already in the case $\lambda=0$.  In Assumption \ref{assn-centralchar}, we give a criterion for when this orthogonality occurs, in terms of the restriction of the characters $w\cdot \lambda$ to $C_{M,0}$, the integral points of the connected center of $\bfM$.  Under stricter bounds on $p$, we prove in Lemmas \ref{split:bruhat} and \ref{split:abelian} that Assumption \ref{assn-centralchar} holds for $\lambda = 0$ and several classes of Levi subgroups $\bfM$.  In particular, for all these examples, the complex $L(N,\cind_{G_0}^G(1_{G_0}))\in\nD(M)$ splits as the orthogonal direct sum of its shifted cohomology objects, giving rise to $[F:\bbQ_p]\dim(\bfN)+1$ Satake morphisms of graded $k$-algebras
$$
\Ext_{G}^{\bullet}\left(\cind_{G_0}^G(1_{G_0}),~\cind_{G_0}^G(1_{G_0})\right)\lra \Ext_{M}^{\bullet}\left(\cind_{M_0}^M(L^n(N_0,1_{G_0})),~\cind_{M_0}^M(L^n(N_0,1_{G_0}))\right).
$$
\end{Pt*}

\begin{Pt*}
The article is organized as follows. Section \ref{section-coh-unipotent} is devoted to calculating cohomology.  In Subsection \ref{p-val}, we review the elements of Lazard's theory of $p$-valued groups that we will use, and develop how the pro-$p$ group $N_0$ enters into this theory. In Subsection \ref{Alg-and-Lie}, we study in detail the $p$-small weights of the algebraic group 
$\Res_{k_F/\bbF_p}(\bfM_{k_F})$ and the corresponding adjoint representations.  In Subsections \ref{coh-n} and \ref{coh-N_0} we compute the Jordan--Hölder constitutents of the $M_0$-representations $H^n(N_0,L(\lambda)_k)$. Then, in Subsection \ref{split:subsection}, we extract sufficient conditions for the complex 
$
\textnormal{R}H^0(N_0,L(\lambda)_k)\in \nD(M_0)
$
to satisfy the orthogonality relations $\RHom_{M_0}(H^m(N_0,L(\lambda)_k),H^n(N_0,L(\lambda)_k))=0$ for all $m\neq n$; we do this by examining its image under the restriction functor $\nD(M_0)\longrightarrow \nD(C_{M,0})$.

Section \ref{LN_0:section} is devoted to the study of the relation between the two functors $\textnormal{R}H^0(N_0,-)$ and $L(N_0,-)$ from 
$\nD(G_0)$ to $\nD(M_0)$, which in particular gives orthogonal properties for the latter. Note that here we have to work with the profinite group $G_0$ which is $p$-torsion-free but not pro-$p$; as a consequence, we use the fact that  $G_0$ is a \emph{Poincaré group at $p$} in the sense of \cite[Ch. III]{NSW}.

In Section \ref{Satake:section}, we construct the $[F:\bbQ_p]\dim(\bfN)+1$ graded Satake morphisms 
$$
\Ext_{G}^{\bullet}\left(\cind_{G_0}^G(X_0),~\cind_{G_0}^G(X_0)\right)\lra \Ext_{M}^{\bullet}\left(\cind_{M_0}^M(L^n(N_0,X_0)),~\cind_{M_0}^M(L^n(N_0,X_0))\right)
$$
for any $X_0\in\nD(G_0)^{\heartsuit}$ such that $\RHom_M(L^m(N,\cind_{G_0}^G(X_0)),L^n(N,\cind_{G_0}^G(X_0)))$ vanishes for all $m\neq n$. In Remark \ref{link-Ronchetti}, we check that for $\bfM=\bfT$ and $X_0=1_{G_0}$, the graded Satake morphism for $n=0$
coincides with the one of \cite{R19b} constructed using universal unramified principal series. (Regarding principal series representations, we also include in Subsection \ref{psr:subsection} the computation of the $G_0$-cohomology with coefficients in the principal series of any mod $p$ smooth character of $T$.)

\end{Pt*}

\begin{Pt*} \textbf{Acknowledgements.}
This project began during a visit of KK to Universit\'e Sorbonne Paris Nord, and he would like to thank Stefano Morra for the invitation.  Both authors would like to thank Claudius Heyer and Stefano Morra for several useful comments.

During the preparation of this article, KK was supported by NSF grant DMS-2101836 and a PSC-CUNY Trad B award, and CP was supported by the ANR project COLOSS ANR-19-CE40-0015.
\end{Pt*}

\subsection{Notation}
We set some notation which will be in force throughout the article.  As above, we let $F$ denote a finite extension of $\bbQ_p$ with ring of integers $\cO_F$, and residue field $k_F$.  Let $\bfG$ denote a split, connected, reductive group over $F$.  Since $\bfG$ is split, we can choose an $\cO_F$-model for $\bfG$ (\cite[Thm. 1.2]{conrad:nonsplitZ}), and we use the same letter to denote this $\cO_F$-model.  

Let 
$$\bfG \supset \bfB \supset \bfU$$
denote a fixed choice of Borel subgroup and its unipotent radical, respectively, and fix a maximal torus $\bfT$ satisfying $\bfB = \bfT \bfU$.  Analogously, let $\bfP = \bfM\bfN$ denote a fixed standard parabolic subgroup of $\bfG$, so that $\bfP$ contains $\bfB$, $\bfM$ contains $\bfT$, and $\bfN$ is the unipotent radical of $\bfP$. We use the same letters to denote the $\cO_F$-models of all these subgroups in the $\cO_F$-group scheme $\bfG$.  

We denote by italicized Roman letters the groups of $F$-points of these groups, so that $G = \bfG(F), B = \bfB(F)$, etc., and by a subscript ``$0$'' the groups of $\cO_F$-points, so that $G_0 = \bfG(\cO_F)$, $B_0 = \bfB(\cO_F)$, etc.  In particular $G_0$ is a hyperspecial compact open subgroup of $G$.

We let $\Phi\subset X^*(\bfT)$ denote the set of roots of $\bfT$ in $\textnormal{Lie}(\bfG)$, and let $\Phi \supset \Phi^+ \supset \Delta$ denote the subsets of positive and simple roots determined by $\bfB$.  We also let $\hgt:\Phi \longrightarrow \bbZ$ denote the height function relative to $\Delta$: if we write $\beta \in \Phi$ as $\beta = \sum_{\alpha \in \Delta} n_\alpha \alpha$, then $\hgt(\beta) = \sum_{\alpha \in \Delta} n_\alpha$.  Given $\alpha \in \Phi$, we let $\bfU_\alpha \subset \bfU$ denote the associated root subgroup, and fix a root isomorphism $u_\alpha:\bfG_{a/\cO_F} \stackrel{\sim}{\longrightarrow} \bfU_\alpha$.  In particular, for $t \in \bfT$ and $x \in \bfG_a$, we have $tu_\alpha(x)t^{-1} = u_\alpha(\alpha(t)x)$.  The parabolic subgroup $\bfP$ corresponds to a subset $J \subset \Delta$, and we let $\Phi_J$ denote the sub-root system generated by $J$, and let $\Phi_J^+ := \Phi_J \cap \Phi^+$.

We fix once and for all a total order on $\Phi$ compatible with the height function $\hgt$.  The morphisms $u_\alpha$ induce an isomorphism of $\cO_F$-schemes (see \cite[Thm. 5.1.16]{conrad:sga3})
\begin{equation}
\label{unip-isom}
\prod_{\alpha \in \Phi^+ - \Phi_J^+}\bfU_\alpha \stackrel{\sim}{\longrightarrow} \bfN,
\end{equation}
the product on the left-hand side being given by the fixed total order.  This isomorphism induces a homeomorphism
\begin{equation}
\label{unip-homeo}
\prod_{\alpha \in \Phi^+ - \Phi^+_J}\bfU_\alpha(\cO_F) \stackrel{\sim}{\longrightarrow} N_0.    
\end{equation}

We let $k$ denote a field of characteristic $p$.  If $H$ is $p$-adic Lie group, we use both the symbols $k$ and $1_H$ to denote the trivial, one-dimensional $H$-representation over $k$.  We let $\textnormal{Rep}(H)$ denote the Grothendieck abelian category of smooth $H$-representations on $k$-vector spaces, and let $\nD(H)$ denote the unbounded derived category of $\textnormal{Rep}(H)$.  The category $\nD(H)$ is a closed symmetric monoidal category (see \cite[Cor. 3.3]{SS22}), such that the functors $-\otimes_k X$ are triangulated for any $X$ in $\nD(H)$.  We let $\underline{\RHom}_H$ denote the internal Hom functor on $\nD(H)$.  Further, $\nD(H)$ has a natural t-structure, and we let $\nD(H)^\heartsuit$ denote its heart.

\section{Mod $p$ cohomology of unipotent subgroups}\label{section-coh-unipotent}

The goal of Section \ref{section-coh-unipotent} is to understand the continuous group cohomology $H^i(N_0,V)$, and the action of $M_0$ on it.  Various versions of this calculation have already been considered in the literature (see \cite[\S 7]{grosseklonne:spherical}, \cite{polotilouine}, \cite{R19a}, \cite[Ch. 2]{kongsgaard:thesis}), but for the sake of completeness we include the arguments below.

\subsection{$p$-valuations and Lie algebras} \label{p-val}
In order to carry out our calculation, we will use Lazard's theory of $p$-valued groups.  We make the following assumptions from this point onwards.

\begin{Assn*}\hfill
    \label{assn1}
    \begin{itemize}[$\diamond$]
    \item $F$ is unramified over $\bbQ_p$, and
    \item $p > h + 1$, where $h$ denotes the maximum of the Coxeter numbers of the irreducible components of the root system of $\bfG$.
    \end{itemize}
\end{Assn*}


\begin{Pt*}
Using the homeomorphism \eqref{unip-homeo}, we define a function $\omega': N_0 \longrightarrow \bbR_{> 0} \cup \{\infty\}$ by the formula
\begin{equation}
\label{omega'-def}
\omega'\left(\prod_{\alpha \in \Phi^+ - \Phi_J^+} u_\alpha(x_\alpha)\right) := \min_{\alpha \in \Phi^+ - \Phi_J^+}\left\{\textnormal{val}_p(x_\alpha) + \frac{\hgt(\alpha)}{h(\alpha)}\right\},
\end{equation}
where each $x_\alpha \in \cO_F$, where $\textnormal{val}_p$ denotes the valuation on $F$ which satisfies $\textnormal{val}_p(p) = 1$, and where $h(\alpha)$ denotes the Coxeter number of the irreducible root system to which $\alpha$ belongs.  We note that $\omega'$ is valued in $\frac{1}{h'}\bbZ_{\geq 1} \cup \{\infty\}$, where $h'$ denotes the least common multiple of the $h(\alpha)$.  By \cite[Prop. 3.5]{lahirisorensen}, Assumption \ref{assn1} guarantees that the function $\omega'$ defines a $p$-valuation on $N_0$.  (For details about $p$-valuations and the constructions which follow, see \cite[\S\S II.1, III.2]{lazard} or \cite[\S 23]{schneider:padicliegroups}.)

We define a modified function $\omega:N_0 \longrightarrow \bbR_{> 0} \cup \{\infty\}$ by
\begin{equation}
\label{omega-def}
\omega(n) := \inf_{m \in M_0}\left\{\omega'(mnm^{-1})\right\}.
\end{equation}
Then $\omega$ is also a $p$-valuation on $N_0$ (\cite[\S III.2.1.2]{lazard}), which moreover satisfies $\omega(mnm^{-1}) = \omega(n)$ for all $m \in M_0, n \in N_0$.  

We have the following:
\begin{Lem*}
\label{psatN0}
The $p$-valued group $(N_0, \omega)$ is $p$-saturated (see \cite[Defs. III.2.1.5, III.2.1.6]{lazard}).
\end{Lem*}

\begin{proof}
Let us choose a basis $\{x_i\}_{i = 1}^{[F:\bbQ_p]}$ for $\cO_F$ over $\bbZ_p$, so that the mod $p$ reductions $\{\overline{x_i}\}_{i = 1}^{[F:\bbQ_p]}$ give a basis for $k_F$ over $\bbF_p$.  The homeomorphism \eqref{unip-homeo} implies that the set 
\begin{equation}
\label{N0-ordered-basis}
\left\{u_{\alpha}(x_i)\right\}_{\alpha \in \Phi^+ - \Phi_J^+, 1 \leq i \leq [F:\bbQ_p]}
\end{equation}
gives an ordered basis for $N_0$, in the sense of \cite[Def. III.2.2.4]{lazard} (for the ordering determined by \eqref{unip-homeo}).  We have
$$\frac{1}{p - 1} < \omega(u_{\alpha}(x_i)) \leq \omega'(u_\alpha(x_i)) = \frac{\hgt(\alpha)}{h(\alpha)} < 1 < \frac{p}{p - 1}.$$
By \cite[Prop. III.2.2.7]{lazard}, we conclude that $(N_0,\omega)$ is $p$-saturated.
\end{proof}
\end{Pt*}

\begin{Pt*}
We now consider Lie algebras.  The $p$-valuation $\omega$ induces a filtration on $N_0$, and we let $\textnormal{gr}(N_0)$ denote the associated graded group.  Recall that it is defined as
$$\textnormal{gr}(N_0) := \bigoplus_{\nu \in \bbR_{>0}} \textnormal{gr}_\nu(N_0) := \bigoplus_{\nu \in \bbR_{>0}} N_{0,\nu}/N_{0,\nu+},$$
where 
$$N_{0,\nu} := \{n \in N_0: \omega(n) \geq \nu\}, \qquad N_{0,\nu+} := \{n \in N_0: \omega(n) > \nu\}.$$
Note that each $\nu$ for which $\textnormal{gr}_\nu(N_0) \neq 1$ satisfies $\nu \in \frac{1}{h'}\bbZ_{\geq 1}$.  The graded group $\textnormal{gr}(N_0)$ has the structure of a graded Lie algebra over $\bbF_p$, and moreover is a Lie algebra over the ring $\bbF_p[\pi]$: the Lie bracket is induced by the commutator, and $\pi$ denotes the $\bbF_p$-linear map defined by
\begin{eqnarray*}
\pi: \textnormal{gr}_\nu(N_0) & \longrightarrow & \textnormal{gr}_{\nu + 1}(N_0) \\
nN_{0,\nu+} & \longmapsto & n^pN_{0,(\nu + 1)+}.
\end{eqnarray*}
Finally, we define
$$\fn_\omega := \textnormal{gr}(N_0) \otimes_{\bbF_p[\pi]} \bbF_p.$$
Thus $\fn_\omega$ is a graded Lie algebra over $\bbF_p$ equipped with a conjugation action of $M_0$, which factors through $\bfM(k_F)$.

On the other hand, one can construct $\textnormal{Lie}(\bfN)$, the Lie algebra functor of the $\cO_F$-group scheme $\bfN$; we set
$$\fn := \textnormal{Lie}(\bfN)(k_F).$$
Thus $\fn$ is a Lie algebra over $k_F$ equipped with a conjugation action of $\bfM(k_F)$.  
\end{Pt*}

The comparison between the two Lie algebras is given by the following.

\begin{Lem*}
    \label{liealgcompn}
    There exists an isomorphism of $\bbF_p$-Lie algebras
    $$\fn_\omega \cong \fn,$$
    after forgetting about the grading on $\fn_\omega$ and the $k_F$-linear structure on $\fn$.  In particular, this implies that the $\bbF_p$-vector space 
    $\fn_\omega$ has a $k_F$-linear structure, and that the Lie bracket on $\fn_\omega$ is $k_F$-bilinear.  Moreover, this isomorphism is $M_0$-equivariant.
\end{Lem*}

\begin{proof}
    We prove that $\textnormal{gr}(N_0) \cong \fn\otimes_{\bbF_p} \bbF_p[\pi]$ as Lie algebras over $\bbF_p[\pi]$.  Applying the base change $-\otimes_{\bbF_p[\pi]} \bbF_p$ (which maps $\pi \longmapsto 0$) will then give the result.
    
    Let us choose a basis $\{x_i\}_{i = 1}^{[F:\bbQ_p]}$ for $\cO_F$ over $\bbZ_p$ as in \ref{psatN0}.
    The set 
    $$\left\{\overline{u_\alpha(p^jx_i)}\right\}_{\alpha \in \Phi^+ - \Phi^+_J,1 \leq i \leq [F:\bbQ_p], j \geq 0}$$
    is then a $\bbF_p$-basis for $\textnormal{gr}(N_0)$, and 
    \begin{equation}\label{n-omega-basis}
    \left\{\overline{u_\alpha(x_i)}\right\}_{\alpha \in \Phi^+ - \Phi^+_J,1 \leq i \leq [F:\bbQ_p]}
    \end{equation}
    is an $\bbF_p[\pi]$ basis for $\textnormal{gr}(N_0)$.  (For $n\in N_{0,\nu}$, we denote by $\overline{n}$ the image of $n$ in $\textnormal{gr}_\nu(N_0) \subset \textnormal{gr}(N_0)$.)

    On the other hand, the Lie algebra functor $\textnormal{Lie}(\bfN)$ on $\cO_F$-algebras $R$ is defined by
    $$\textnormal{Lie}(\bfN)(R) := \ker\Big(\bfN(R[\varepsilon]/(\varepsilon^2)) \longrightarrow \bfN(R)\Big).$$
    In particular
    $$\fn = \ker\Big(\bfN(k_F[\varepsilon]/(\varepsilon^2)) \longrightarrow \bfN(k_F)\Big).$$
    Thus, the map 
    \begin{eqnarray}
        \ff: \fn\otimes_{\bbF_p}\bbF_p[\pi] & \stackrel{\sim}{\longrightarrow} &\textnormal{gr}(N_0) \label{def-of-ff}\\
        u_\alpha(\overline{x_i}\varepsilon)\otimes f(\pi) & \longmapsto & f(\pi)\cdot \overline{u_\alpha(x_i)} \notag
    \end{eqnarray}
    gives an $\bbF_p[\pi]$-linear bijection between the two Lie algebras, which is moreover equivariant for the conjugation action of $M_0$.  It only remains to verify that this isomorphism preserves the Lie bracket.

    By \cite[Prop. 5.1.14]{conrad:sga3}, given distinct roots $\alpha,\beta \in \Phi^+ - \Phi_J^+$, the root morphisms $u_\alpha, u_\beta$ satisfy 
    \begin{equation}
    \label{commutator}
    u_\alpha(x)u_\beta(y)u_\alpha(x)^{-1}u_\beta(y)^{-1} = \prod_{\substack{i,j> 0 \\ i\alpha + j\beta \in \Phi^+ - \Phi_J^+}}u_{i\alpha + j\beta}(c_{\alpha,\beta; i,j}x^iy^j),
    \end{equation}
    for some fixed $c_{\alpha,\beta;i,j} \in \bbZ_p$.  In particular, if we let $\varepsilon'$ denote another indeterminate satisfying $(\varepsilon')^2 = 0$, we have (assuming $\alpha + \beta \in \Phi^+$)
    $$u_\alpha(x\varepsilon)u_\beta(y\varepsilon')u_\alpha(x\varepsilon)^{-1}u_\beta(y\varepsilon')^{-1} = u_{\alpha + \beta}(c_{\alpha,\beta; 1,1}xy\varepsilon\varepsilon').$$
    Hence, by \cite[Lem. A.7.4, Prop. A.7.5]{CGP}, the Lie bracket on $\fn$ is given by
    \begin{equation}
    \label{comm1}
    [u_\alpha(x\varepsilon), u_\beta(y\varepsilon)] = u_{\alpha + \beta}(c_{\alpha,\beta;1,1}xy\varepsilon).
    \end{equation}
    On the other hand, by \cite[\S II.1.1.7]{lazard} the Lie bracket on $\textnormal{gr}(N_0)$ is induced by the commutator \eqref{commutator}.  Thus, in the graded group $\textnormal{gr}(N_0)$, we have
    \begin{equation}
    \label{comm2}
    [\overline{u_\alpha(x)},\overline{u_\beta(y)}] = \overline{u_\alpha(x)u_\beta(y)u_\alpha(x)^{-1}u_\beta(y)^{-1}} = \overline{u_{\alpha + \beta}(c_{\alpha,\beta; 1,1}xy)},    
    \end{equation}
    since any other root $i\alpha + j\beta$ appearing in \eqref{commutator} will be of strictly greater height (and thus $u_{i\alpha + j\beta}(c_{\alpha,\beta;i,j}x^iy^j)$ will have strictly greater $p$-valuation).  
    Comparing \eqref{comm1} and \eqref{comm2}, we see that the isomorphism $\ff$ preserves the Lie bracket.
\end{proof}

\begin{Pt*}
We recall one more construction.  Let $\cO_F\llbracket N_0 \rrbracket$ denote the completed group algebra of $N_0$ over $\cO_F$, and let $w: \cO_F\llbracket N_0 \rrbracket \longrightarrow \bbR_{>0} \cup \{\infty\}$ be the valuation associated to $\omega$.  (For the definition of the latter, see \cite[Def. III.2.3.1.2]{lazard}; note that the ``filtration'' $w$ on $\cO_F[N_0]$ defined in \emph{loc. cit.} is in fact a valuation (as defined in \cite[Def. I.2.2.1]{lazard}), and $\cO_F\llbracket N_0 \rrbracket$ is the completion of $\cO_F[N_0]$ for $w$ \cite[Thm. III.2.3.3, Cor. III.2.3.4]{lazard}.  For an explicit description of $w$, see \cite[\S III.2.3.8]{lazard}.  We note also that the constructions of \emph{loc. cit.} carry over readily from the case of $\bbZ_p$-coefficients to the case of $\cO_F$-coefficients.)  We let $\textnormal{gr}(\cO_F\llbracket N_0 \rrbracket)$ denote the associated graded ring. Recall that it is defined as 
$$\textnormal{gr}(\cO_F\llbracket N_0 \rrbracket) := \bigoplus_{\nu \in \bbR_{\geq 0}} \textnormal{gr}_\nu(\cO_F\llbracket N_0 \rrbracket) := \bigoplus_{\nu \in \bbR_{\geq 0}}\cO_F\llbracket N_0 \rrbracket_{\nu}/\cO_F\llbracket N_0 \rrbracket_{\nu+},$$
where 
$$\cO_F\llbracket N_0 \rrbracket_\nu := \{\xi \in \cO_F\llbracket N_0 \rrbracket: w(\xi) \geq \nu\},\qquad \cO_F\llbracket N_0 \rrbracket_{\nu+} := \{\xi \in \cO_F\llbracket N_0 \rrbracket: w(\xi) > \nu\}.$$
The graded ring $\textnormal{gr}(\cO_F\llbracket N_0 \rrbracket)$ has the structure of a graded algebra over $k_F[\pi]$, where $\pi$ acts by 
\begin{eqnarray*}
\pi: \textnormal{gr}_\nu(\cO_F\llbracket N_0 \rrbracket) & \longrightarrow & \textnormal{gr}_{\nu + 1}(\cO_F\llbracket N_0 \rrbracket) \\
\xi + \cO_F\llbracket N_0 \rrbracket_{\nu + } & \longmapsto & p\xi + \cO_F\llbracket N_0 \rrbracket_{(\nu + 1)+}.
\end{eqnarray*}
The maps (defined in \cite[II.1.1.9]{lazard})
\begin{eqnarray}
\textnormal{gr}_\nu(N_0) & \longrightarrow & \textnormal{gr}_\nu(\cO_F\llbracket N_0 \rrbracket)  \label{lie-alg-to-gr}\\
nN_{0,\nu+} & \longmapsto & (n - 1) + \cO_F\llbracket N_0 \rrbracket_{\nu+} \notag
\end{eqnarray}
assemble to a map $\gr(N_0) \longrightarrow \gr(\cO_F\llbracket N_0 \rrbracket)$.  
\end{Pt*}

We have the following result on the structure of $\gr(\cO_F\llbracket N_0 \rrbracket)$.

\begin{Th*} \emph{\textbf{\cite[Thm. III.2.3.3]{lazard}}}
The maps \eqref{lie-alg-to-gr} induce an isomorphism of graded $k_F[\pi]$-modules
$$U_{\bbF_p[\pi]}(\gr(N_0)) \otimes_{\bbF_p[\pi]}  k_F[\pi]\stackrel{\sim}{\longrightarrow} \gr(\cO_F\llbracket N_0 \rrbracket),$$
where $U_{\bbF_p[\pi]}$ denotes the universal enveloping algebra over $\bbF_p[\pi]$.
\end{Th*}

\begin{Pt*}
Finally let $k_F\llbracket N_0 \rrbracket$ denote the completed group algebra of $N_0$ over $k_F$, and note that $\cO_F\llbracket N_0 \rrbracket \otimes_{\cO_F} k_F \cong k_F\llbracket N_0 \rrbracket$.  We define $k_F\llbracket N_0 \rrbracket_{\nu}$ (resp., $k_F\llbracket N_0 \rrbracket_{\nu+}$) as the image of $\cO_F\llbracket N_0 \rrbracket_{\nu}$ (resp., $\cO_F\llbracket N_0 \rrbracket_{\nu+}$) in $k_F\llbracket N_0 \rrbracket$. 
\end{Pt*}

Applying the base change $-\otimes_{k_F[\pi]}k_F$ (mapping $\pi \longmapsto 0$) to the previous result gives the following.

\begin{Cor*}\label{lazardcor}
We have an isomorphism of graded $k_F$-algebras
$$U_{k_F}(\fn_\omega\otimes_{\bbF_p}k_F) = U_{\bbF_p[\pi]}(\gr(N_0))\otimes_{\bbF_p[\pi]}k_F \stackrel{\sim}{\longrightarrow} \gr(k_F\llbracket N_0 \rrbracket).$$
\end{Cor*}

\subsection{Algebraic representations and Lie algebra actions}\label{Alg-and-Lie}

We now discuss the actions of the Lie algebras $\fn_\omega$ and $\fn$ on various algebraic representations.  

\begin{Pt*}
To allow for more flexibility, we put ourselves in the following setting.  We let $\underline{\bfG}$ denote the Weil restriction from $k_F$ to $\bbF_p$ of the special fiber of the $\cO_F$-group scheme $\bfG$:
$$\underline{\bfG} := \textnormal{Res}_{k_F/\bbF_p}(\bfG_{k_F}).$$
We use the analogous notation $\underline{\bfB}, \underline{\bfT}$, etc., for the other group schemes introduced above.  We have
\begin{equation}
\label{resGdecomp}
\underline{\bfG}_{k_F} := \underline{\bfG}\times_{\bbF_p} k_F \cong \prod_{\varsigma:k_F \rightarrow k_F} \bfG_{k_F}\times_{k_F,\varsigma}k_F,
\end{equation}
where $\varsigma$ runs over all the field homomorphisms $k_F \longrightarrow k_F$ over $\bbF_p$.  We have similar decompositions and notations for the other underlined groups, as well as an isomorphism of character groups
\begin{equation}
\label{resTdecomp}
X^*(\underline{\bfT}_{k_F}) \cong \bigoplus_{\varsigma:k_F \rightarrow k_F}X^*(\bfT_{k_F} \times_{k_F, \varsigma} k_F).
\end{equation}
To make notation lighter, we set $X^*(\underline{\bfT}):=X^*(\underline{\bfT}_{k_F})$\footnote{In this paper, we will never consider 
$X^*(\underline{\bfT})$ as a $\Gal(k_F/\bbF_p)$-module, i.e., we only regard it as an abstract abelian group.}.
Accordingly, we denote the set of roots of $\underline{\bfT}_{k_F}$ in $\textnormal{Lie}(\underline{\bfG}_{k_F})$ by $\underline{\Phi}$; by \eqref{resTdecomp} and the decomposition
$$
\textnormal{Lie}(\underline{\bfG}_{k_F})(k_F) \cong \bigoplus_{\varsigma:k_F \rightarrow k_F} \textnormal{Lie}(\bfG)(k_F) \otimes_{k_F,\varsigma} k_F,
$$
we can identify $\underline{\Phi}$ with $[k_F:\bbF_p]$ copies of $\Phi$ indexed by the field homomorphisms $\varsigma: k_F \longrightarrow k_F$ over $\bbF_p$.  We use analogous notation $\underline{\Phi}^+, \underline{\Delta}$, etc., for the other root-theoretic data.  In particular, the subset $J \subset \Delta$ (corresponding to $\bfP = \bfM\bfN$) gives a subset $\underline{J} \subset \underline{\Delta}$ (corresponding to $\underline{\bfP} = \underline{\bfM}\underline{\bfN}$).  Finally, we let $W = W(\underline{\bfG},\underline{\bfT})$ denote the Weyl group of $\underline{\bfG}_{k_F}$ relative to $\underline{\bfT}_{k_F}$.  
\end{Pt*}

\begin{Pt*}
We let 
$$X^*(\underline{\bfT})_+ := \{\lambda \in X^*(\underline{\bfT}): \langle \lambda, \alpha^\vee \rangle \geq 0 \quad \textnormal{for all}\quad \alpha \in \underline{\Phi}^+\}$$
denote the set of dominant characters of $\underline{\bfT}$ (with respect to $\underline{\Phi}^+$).  For future reference, we also define
$$\rho := \frac{1}{2}\sum_{\alpha \in \underline{\Phi}^+} \alpha \in X^*(\underline{\bfT}) \otimes_{\bbZ}\bbQ,$$
and recall that $2\rho \in \bbZ\underline{\Phi} \subset X^*(\underline{\bfT})$, and that $\langle \rho, \alpha^\vee \rangle = 1$ for all $\alpha \in \underline{\Delta}$.

Given $\lambda \in X^*(\underline{\bfT})_+$, we let $L(\lambda)$ denote the irreducible representation of the split connected reductive group 
$\underline{\bfG}_{k_F}$ of highest weight $\lambda$ (see \cite[Part II, \S 2.4]{jantzen}).  We will often also use the notation $L(\lambda)$ to denote the $k_F$-points of this representation, which we view as a representation of the finite group $\bfG(k_F)$ over $k_F$ via the following sequence of morphisms:
$$\bfG(k_F) \cong \underline{\bfG}(\bbF_p) \longhookrightarrow  \underline{\bfG}(k_F) \curvearrowright  L(\lambda).$$
Then $L(\lambda)$ defines a representation of $G_0=\bfG(\cO_F)$ via inflation.  

Given a subset $J \subset \Delta$ (corresponding to $\bfM$) and a character $\lambda \in X^*(\underline{\bfT})$ which is dominant with respect to $\underline{\Phi}_J^+$, we use the notation $L_J(\lambda)$ to denote the irreducible representation of $\underline{\bfM}_{k_F}$ of highest weight 
$\lambda$.  
\end{Pt*}

\begin{Def*}
We say $\lambda \in X^*(\underline{\bfT})_+$ is \emph{$p$-small} (with respect to $\underline{\Phi}^+$) if 
$$\langle \lambda + \rho, \alpha^\vee \rangle \leq p \quad \textnormal{for all} \quad \alpha \in \underline{\Phi}^+.$$
\end{Def*}

We shall need several representation-theoretic results.  We begin with some combinatorial lemmas.

\begin{Lem*}
\label{p-small-J}
Let $\lambda \in X^*(\underline{\bfT})_+$ be $p$-small.  Fix $J \subset \Delta$, and assume $\mu\in X^*(\underline{\bfT})$ is a weight of $L(\lambda)$ which is dominant with respect to $\underline{\Phi}_J^+$.  Then $\mu$ is $p$-small with respect to $\underline{\Phi}_J^+$.
\end{Lem*}

\begin{proof}
Define
$$C' := \{\eta \in X^*(\underline{\bfT})\otimes_{\bbZ} \bbR: \langle\eta + \rho, \alpha^\vee\rangle \leq p \quad \textnormal{for all} \quad \alpha \in \underline{\Phi} \}.$$
The set $C'$ is convex (being the intersection of half-spaces), and contains $\lambda$ by assumption.  

\begin{Claim}
The set $C'$ contains the $W$-orbit of $\lambda$.
\end{Claim}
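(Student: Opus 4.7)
The plan is to verify directly that, for every $w \in W$ and every $\alpha \in \underline{\Phi}$, one has $\langle w\lambda + \rho, \alpha^\vee \rangle \leq p$. The key observation that makes this uniform across all $\alpha$ is that both $\lambda$ and $\rho$ are dominant, so each factor of the pairing can be bounded simultaneously by its value at a single distinguished positive root attached to the Weyl orbit $W\alpha$.

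First I would record the following root-theoretic fact, applied componentwise to the (possibly reducible) root system $\underline{\Phi}$: each Weyl orbit $W\alpha$ contains a unique ``highest'' element $\gamma_0 \in \underline{\Phi}^+$, namely the highest root of length $|\alpha|$ in the irreducible component of $\underline{\Phi}$ containing $\alpha$. Moreover, for every $\eta \in W\alpha$, the difference $\gamma_0 - \eta$ lies in $\mathbb{Z}_{\geq 0}\underline{\Delta}$; since $\gamma_0$ and $\eta$ share the same length, a rescaling then yields $\gamma_0^\vee - \eta^\vee \in \mathbb{Z}_{\geq 0}\underline{\Delta}^\vee$. Consequently, for any dominant $\mu \in X^*(\underline{\bfT}) \otimes_{\mathbb{Z}} \mathbb{R}$, the function $\eta \longmapsto \langle \mu, \eta^\vee\rangle$ on $W\alpha$ attains its maximum at $\gamma_0$.

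Next I would decompose
$$\langle w\lambda + \rho, \alpha^\vee\rangle \;=\; \langle \lambda, w^{-1}\alpha^\vee\rangle + \langle \rho, \alpha^\vee\rangle,$$
observe that $w^{-1}\alpha$ and $\alpha$ both belong to the orbit $W\alpha$, and apply the bound above first to $\mu = \lambda$ and then to $\mu = \rho$. This gives
$$\langle w\lambda + \rho, \alpha^\vee\rangle \;\leq\; \langle \lambda, \gamma_0^\vee\rangle + \langle \rho, \gamma_0^\vee\rangle \;=\; \langle \lambda + \rho, \gamma_0^\vee\rangle \;\leq\; p,$$
where the final inequality is precisely the $p$-smallness hypothesis applied to the positive root $\gamma_0$. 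The argument works uniformly for every $\alpha \in \underline{\Phi}$, whether positive or negative, because $\gamma_0$ is a positive root in either case.

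The main obstacle is really just the initial root-theoretic lemma identifying $\gamma_0$ and supplying the inequality $\gamma_0^\vee - \eta^\vee \in \mathbb{Z}_{\geq 0}\underline{\Delta}^\vee$. This is standard (it is the uniqueness of the highest root of a given length in each irreducible root system, together with the observation that each Weyl orbit of roots consists of roots of a single length), but it needs to be cited or spelled out so that the bound $\langle \mu, \eta^\vee\rangle \leq \langle \mu, \gamma_0^\vee\rangle$ is available for the two dominant vectors $\mu = \lambda$ and $\mu = \rho$ that drive the proof.
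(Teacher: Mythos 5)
Your argument is correct and takes a genuinely different, more streamlined route than the paper's. The paper first rewrites $C'$ by isolating $\rho$ (using $\langle \rho, \alpha^\vee\rangle = \hgt(\alpha^\vee)$) and then splits into two cases according to whether $w^{-1}(\alpha^\vee)$ is a positive or negative coroot; the positive case is handled by contradiction, chaining $w^{-1}(\alpha^\vee)$ up to the highest coroot $\alpha_0^\vee$ via simple coroots. You instead keep $\rho$ in place, use the decomposition $\langle w\lambda + \rho, \alpha^\vee\rangle = \langle\lambda, w^{-1}\alpha^\vee\rangle + \langle\rho, \alpha^\vee\rangle$, and bound both terms simultaneously by the value at the dominant representative $\gamma_0$ of $W\alpha$, invoking $p$-smallness at the single positive root $\gamma_0$. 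This removes the case split and the argument by contradiction, at the cost of needing the root-theoretic fact that $\langle\mu, \eta^\vee\rangle$ is maximized over $\eta \in W\alpha$ at the dominant root $\gamma_0$ when $\mu$ is dominant. (Both proofs ultimately rest on the same kind of fact about distinguished maximal roots; yours packages it more cleanly.)

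One small caveat on your auxiliary lemma: the phrase ``since $\gamma_0$ and $\eta$ share the same length, a rescaling then yields $\gamma_0^\vee - \eta^\vee \in \bbZ_{\geq 0}\underline{\Delta}^\vee$'' is a little loose, because rescaling $\gamma_0 - \eta \in \bbZ_{\geq 0}\underline{\Delta}$ by $2/|\gamma_0|^2$ only directly gives a nonnegative \emph{real} combination of the $\alpha_i^\vee$, as the simple roots in $\underline{\Delta}$ need not all have length $|\gamma_0|$. This does not affect the proof, since nonnegativity (not integrality) of the coefficients is all you use in the estimate $\langle\mu, \eta^\vee\rangle \leq \langle\mu, \gamma_0^\vee\rangle$. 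It is cleaner still to observe that $\gamma_0$ dominant is equivalent to $\gamma_0^\vee$ dominant in $\underline{\Phi}^\vee$, and then apply the dominance lemma ($\nu$ dominant implies $\nu - w\nu \in \bbZ_{\geq 0}\underline{\Delta}^\vee$) directly in the dual root system, which gives the integral statement with no rescaling.
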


\begin{proof}[Proof of claim]
We note first that the set $C'$ may equivalently be described as
\begin{equation}
\label{C'positive}
C' = \{\eta \in X^*(\underline{\bfT})\otimes_{\bbZ} \bbR: -p - \hgt(\alpha^\vee) \leq \langle\eta, \alpha^\vee\rangle \leq p - \hgt(\alpha^\vee) \quad \textnormal{for all} \quad \alpha \in \underline{\Phi}^+ \}, 
\end{equation}
where, by abuse of notation, we let $\hgt:\underline{\Phi}^{\vee} \longrightarrow \bbZ$ denote the height function relative to $\underline{\Delta}^\vee$.

Now fix $w \in W$ and $\alpha \in \underline{\Phi}^{+}$, and suppose first that $w^{-1}(\alpha^{\vee}) \in \underline{\Phi}^{-,\vee}$.  This implies $\hgt(\alpha^\vee) - \hgt(w^{-1}(\alpha^\vee)) > 0$, so that 
\begin{eqnarray*}
-p - \hgt(\alpha^\vee) & < & -p - \hgt(\alpha^\vee) + \left(\hgt(\alpha^\vee) - \hgt(w^{-1}(\alpha^\vee))\right) \\
 & = &  -p - \hgt(w^{-1}(\alpha^\vee)) \\
 & \leq & \langle \lambda, w^{-1}(\alpha^\vee)\rangle \\
 & \leq & 0,
\end{eqnarray*}
where the two non-strict inequalities follow from the fact that $\lambda$ is dominant and $p$-small.  Since $\langle \lambda, w^{-1}(\alpha^\vee)\rangle = \langle w(\lambda), \alpha^\vee\rangle$, we see that $w(\lambda)$ satisfies the inequalities \eqref{C'positive} when $w^{-1}(\alpha^\vee) \in \underline{\Phi}^{-,\vee}$ (recall that $p>h+1$).

On the other hand, suppose $w^{-1}(\alpha^\vee) \in \underline{\Phi}^{+,\vee}$, so that $0 \leq \langle \lambda, w^{-1}(\alpha^\vee)\rangle = \langle w(\lambda), \alpha^\vee\rangle$ by dominance of $\lambda$.  To verify the other inequality, assume by contradiction that $\langle w(\lambda), \alpha^\vee\rangle > p - \hgt(\alpha^\vee)$.  Using that $\lambda$ is $p$-small, we get
$$p - \hgt(\alpha^\vee) < \langle w(\lambda), \alpha^\vee\rangle = \langle \lambda, w^{-1}(\alpha^\vee)\rangle \leq p - \hgt(w^{-1}(\alpha^\vee))$$
which implies $\hgt(w^{-1}(\alpha^\vee)) < \hgt(\alpha^\vee)$.  In particular $w^{-1}(\alpha^\vee)$ is not equal to $\alpha_0^\vee$, the highest coroot of the irreducible component of $\underline{\Phi}^\vee$ to which $\alpha^\vee$ belongs.  Thus, there exists a sequence of (not necessarily distinct) simple coroots $\alpha_1^\vee, \ldots, \alpha_n^\vee \in \underline{\Delta}^\vee$ (with $n \geq 1$) such that 
\begin{itemize}[$\diamond$]
\item $w^{-1}(\alpha^\vee) + \sum_{i = 1}^j \alpha_i^\vee \in \underline{\Phi}^{+,\vee}$ for each $j = 1, \ldots, n$, and
\item $w^{-1}(\alpha^\vee) + \sum_{i = 1}^n \alpha_i^\vee$ is equal to $\alpha^\vee_0$.
\end{itemize}
Again using dominance of $\lambda$, we have that $\sum_{i = 1}^n \langle \lambda, \alpha_i^\vee\rangle \geq 0$, which implies
$$p - \hgt(\alpha^\vee) < \langle \lambda, w^{-1}(\alpha^\vee)\rangle + \sum_{i = 1}^n \langle \lambda, \alpha_i^\vee\rangle = \langle \lambda, \alpha_0^\vee \rangle \leq p - \hgt(\alpha_0^\vee)$$
where in the last inequality we have used $p$-smallness of $\lambda$.  The above inequality then gives $\hgt(\alpha_0^\vee) < \hgt(\alpha^\vee)$, contradicting the maximality of $\alpha_0^\vee$.  Thus $w(\lambda)$ also satisfies the inequalities \eqref{C'positive} when $w^{-1}(\alpha^\vee) \in \underline{\Phi}^{+,\vee}$, and the claim is verified.
\end{proof}

We now continue with the proof of the lemma.  Since $\mu$ is a weight of $L(\lambda)$, it is contained in the convex hull of the set $\{w(\lambda)\}_{w\in W}$.  By convexity of $C'$ and the above claim, we get that $\mu \in C'$.  For $\alpha \in \underline{J}$, we have $\langle \rho, \alpha^\vee \rangle = 1 = \langle \rho_M, \alpha^\vee\rangle$ (where $\rho_M := \frac{1}{2}\sum_{\alpha \in \underline{\Phi}_J^+}\alpha$), which implies 
$$\langle \mu + \rho_M, \alpha^\vee \rangle =  \langle \mu + \rho, \alpha^\vee\rangle \leq p$$
for all $\alpha \in \underline{\Phi}_J^+$.  Thus $\mu$ is $p$-small with respect to $\underline{\Phi}_J^+$.  
\end{proof}

\begin{Cor*}
\label{algrepscor}
Let $\lambda \in X^*(\underline{\bfT})_+$ be $p$-small.  Fix $J \subset \Delta$, and let $\bfM$ denote the associated Levi subgroup of 
$\bfG$.
\begin{enumerate}[(a)]
\item\label{algrepscor-a} The $\underline{\bfM}_{k_F}$-representation $L(\lambda)|_{\underline{\bfM}_{k_F}}$ is semisimple.
\item\label{algrepscor-b} The $\bfG(k_F)$-representation $L(\lambda)$ is (absolutely) irreducible.
\item\label{algrepscor-c} The $\bfM(k_F)$-representation $L(\lambda)|_{\bfM(k_F)}$ is semisimple.
\end{enumerate}
\end{Cor*}

\begin{proof}
\begin{enumerate}[(a)]
\item Suppose $L_J(\mu)$ and $L_J(\mu')$ are two irreducible subquotients of $L(\lambda)|_{\underline{\bfM}_{k_F}}$, where $\mu,\mu'\in X^*(\underline{\bfT})$ are dominant with respect to $\underline{\Phi}^+_J$.  Since both $\mu$ and $\mu'$ are also weights of $L(\lambda)$, Lemma \ref{p-small-J} implies that they are both $p$-small relative to $\underline{\Phi}^+_J$.  Therefore, we get
$$\Ext^1_{\underline{\bfM}_{k_F}}\left(L_J(\mu), L_J(\mu')\right) = 0;$$
if $\mu' \neq \mu$, this follows from the Linkage Principle (\cite[Ch. II, Cor. 6.17]{jantzen}), while if $\mu' = \mu$ it follows from \cite[Ch. II, \S 2.12, Eqn. (1)]{jantzen}.  Thus $L(\lambda)|_{\underline{\bfM}_{k_F}}$ is semisimple as a representation of the algebraic group $\underline{\bfM}_{k_F}$. 
 
\item When the derived subgroup of $\underline{\bfG}$ is simply connected, \cite[Lem. 9.2.4]{GHS} implies that $L(\lambda)\otimes_{k_F}\overline{k_F}$ is irreducible as a representation of $\underline{\bfG}(\bbF_p) \cong \bfG(k_F)$, and hence the same is true of the representation $L(\lambda)$.  (Note that the conditions in \cite[Hyp. 9.1.1]{GHS} that the center of $\underline{\bfG}$ is connected and that $\underline{\bfG}$ possesses a twisting element is not used in the proof of the cited result.)  In general, choose a $z$-extension 
$$1 \longrightarrow \widetilde{\bfZ} \longrightarrow \widetilde{\bfG} \longrightarrow \bfG_{k_F} \longrightarrow 1,$$
where $\widetilde{\bfG}$ is a split connected reductive group over $k_F$ with simply connected derived subgroup, and $\widetilde{\bfZ}$ is a split central torus (cf. \cite[p. 387]{K86}).  
Restricting scalars from $k_F$ to $\bbF_p$, we obtain an exact sequence 
\begin{equation}
\label{z-ext-res'd}
1 \longrightarrow \underline{\widetilde{\bfZ}} \longrightarrow \underline{\widetilde{\bfG}} \longrightarrow \underline{\bfG}\longrightarrow 1,
\end{equation}
where $\underline{\widetilde{\bfG}} := \textnormal{Res}_{k_F/\bbF_p}(\widetilde{\bfG})$ and $\underline{\widetilde{\bfZ}} := \textnormal{Res}_{k_F/\bbF_p}(\widetilde{\bfZ})$. We may then view $L(\lambda)$ as an irreducible algebraic representation of $\underline{\widetilde{\bfG}}_{k_F}$ on which 
$\underline{\widetilde{\bfZ}}_{k_F}$ acts trivially.  More precisely, 
as a representation of $\underline{\widetilde{\bfG}}_{k_F}$, the representation $L(\lambda)$ has the form $L(\widetilde{\lambda})$ for the  dominant and $p$-small character $\widetilde{\lambda}$ inflated from $\lambda$. Noting that the derived subgroup of $\underline{\widetilde{\bfG}}$ is simply connected, 
\cite[Lem. 9.2.4]{GHS} now implies that $L(\widetilde{\lambda})$ is absolutely irreducible as a representation of $\underline{\widetilde{\bfG}}(\bbF_p)$.  Furthermore, by Shapiro's Lemma and Hilbert's Theorem 90, we have $H^1(\bbF_p, \underline{\widetilde{\bfZ}}) = H^1(k_F, \widetilde{\bfZ}) = 0$, and therefore taking Galois invariants of the exact sequence \eqref{z-ext-res'd} gives
$$1 \longrightarrow \underline{\widetilde{\bfZ}}(\bbF_p) \longrightarrow \underline{\widetilde{\bfG}}(\bbF_p) \longrightarrow \underline{\bfG}(\bbF_p) \longrightarrow 1.$$
Since $\underline{\widetilde{\bfZ}}(\bbF_p)$ acts trivially on $L(\widetilde{\lambda})$, we see that the action of $\underline{\widetilde{\bfG}}(\bbF_p)$ descends to an absolutely irreducible action of $\underline{\bfG}(\bbF_p) = \bfG(k_F)$ on $L(\lambda)$.

\item By part (a), the algebraic $\underline{\bfM}_{k_F}$-representation $L(\lambda)|_{\underline{\bfM}_{k_F}}$ takes the form
$$L(\lambda)|_{\underline{\bfM}_{k_F}} = \bigoplus_{\mu \in \mathcal{S}} L_J(\mu)$$
where $\mathcal{S}$ is some finite set of weights (possibly with multiplicity) which are dominant and $p$-small with respect to $\underline{\Phi}_J^+$.  Restricting further to $\bfM(k_F)=\underline{\bfM}(\bbF_p)\subset \underline{\bfM}(k_F)$, we obtain
$$L(\lambda)|_{\bfM(k_F)} = \bigoplus_{\mu \in \mathcal{S}} L_J(\mu)|_{\bfM(k_F)}.$$ 
Applying part (b) to $\bfM$ in place of $\bfG$, we see that each $L_J(\mu)|_{\bfM(k_F)}$ is an irreducible $\bfM(k_F)$-representation, which gives the claim.
\end{enumerate}
\end{proof}

\begin{Lem*}
\label{wt-string-lemma}
Suppose $\lambda \in X^*(\underline{\bfT})_+$ is $p$-small.  Then any weight string in $L(\lambda)$ has length at most $p$.
\end{Lem*}

\begin{proof}
Any weight string in $L(\lambda)$ has the form
$$\mu,~ \mu - \alpha,~ \mu - 2\alpha,~ \ldots,~ \mu - r\alpha,$$
where $\mu \in X^*(\underline{\bfT})$, $\alpha \in \underline{\Phi}$, and $r \geq 0$.  Furthermore, each of the characters appearing above are weights of $L(\lambda)$, while $\mu + \alpha$ and $\mu - (r + 1)\alpha$ are \emph{not} weights of $L(\lambda)$, and the integer $r$ is given by $\langle \mu, \alpha^\vee\rangle$ (see \cite[\S 21.3]{humphreys:GTM}).  Acting by the Weyl group $W$, we may moreover assume $\mu \in X^*(\underline{\bfT})_+$.

If $r = \langle \mu,\alpha^\vee\rangle = 0$, then the weight string has length 1 and the lemma holds.  Therefore, we may assume $r = \langle \mu, \alpha^\vee\rangle > 0$.  In particular, this implies that $\alpha \in \underline{\Phi}^+$.  Let $\alpha_0^\vee$ denote the highest coroot of the irreducible component of $\underline{\Phi}^\vee$ to which $\alpha^\vee$ belongs.  As in the proof of Lemma \ref{p-small-J}, we let $\alpha_1^\vee, \ldots, \alpha_n^\vee \in \underline{\Delta}^\vee$ denote a sequence of simple coroots such that 
\begin{itemize}[$\diamond$]
\item $\alpha^\vee + \sum_{i = 1}^j \alpha_i^\vee \in \underline{\Phi}^{+,\vee}$ for each $j = 1, \ldots, n$, and
\item $\alpha^\vee + \sum_{i = 1}^n \alpha_i^\vee$ is equal to $\alpha^\vee_0$
\end{itemize}
(we allow $\alpha^\vee = \alpha_0^\vee$ here).  The dominance of $\mu$ now gives $\langle \mu, \alpha_i^\vee \rangle \geq 0$, which implies
\begin{equation}
\label{wt-string-1}
\langle \mu, \alpha_0^\vee \rangle = \langle \mu, \alpha^\vee \rangle + \sum_{i = 1}^n\langle \mu, \alpha_i^\vee \rangle \geq \langle \mu, \alpha^\vee \rangle.
\end{equation}

Next, by \cite[Ch. VI, \S 1.8, Prop. 25(ii)]{bourbaki:Lie4-6}, the coroot $\alpha_0^\vee$ lies in the closure of the Weyl chamber of $X_*(\underline{\bfT})\otimes_{\bbZ}\bbR$ defined by $\underline{\Delta}$.  Therefore, we have $\langle \beta, \alpha_0^\vee\rangle \geq 0$ for all $\beta \in \underline{\Delta}$.  Since $\mu$ is a weight of $L(\lambda)$, it lies below $\lambda$ in the partial order defined by $\underline{\Delta}$, so we may write $\lambda = \mu + \sum_{\beta \in \underline{\Delta}} n_\beta \beta$, where $n_\beta \in \bbZ_{\geq 0}$.  Combining these two facts shows that
\begin{equation}
\label{wt-string-2}
\langle \lambda, \alpha_0^\vee \rangle = \langle \mu, \alpha_0^\vee \rangle + \sum_{\beta \in \underline{\Delta}} n_\beta \langle \beta, \alpha_0^\vee\rangle \geq \langle \mu, \alpha_0^\vee \rangle.
\end{equation}

We now conclude.  The length of the weight string in the first paragraph above is given by $r + 1 = \langle \mu, \alpha^\vee \rangle + 1$, and by $p$-smallness of $\lambda$ we obtain
$$\langle \mu, \alpha^\vee \rangle + 1\stackrel{\eqref{wt-string-1}}{\leq} \langle \mu, \alpha_0^\vee \rangle + 1 \stackrel{\eqref{wt-string-2}}{\leq}\langle \lambda, \alpha_0^\vee \rangle + 1 \leq p - \hgt(\alpha_0^\vee) + 1 \leq p.$$

\end{proof}

\begin{Pt*}

Now we proceed to the construction of an action of the Lie algebra $\fn_\omega\otimes_{\bbF_p}k_F$ on an appropriate graded module associated to $L(\lambda)$.  To start with, let us define $\bfG'$ as the Weil restriction from $\cO_F$ to $\bbZ_p$ of $\bfG$:
$$\bfG' := \textnormal{Res}_{\cO_F/\bbZ_p}(\bfG).$$
We use analogous primed notation $\bfM', \bfN'$, etc., for the Weil restrictions of $\bfM, \bfN$, etc.  We set $N_0' := \bfN'(\cO_F)$, so that
\begin{equation}
\label{N0'-prod}
N_0' = \bfN(\cO_F \otimes_{\bbZ_p} \cO_F) \cong \prod_{\varsigma:\cO_F \rightarrow \cO_F}(\bfN\times_{\cO_F,\varsigma}\cO_F)(\cO_F)
\end{equation}
(note that since $\cO_F$ is unramified over $\bbZ_p$, we may identify the field isomorphisms $k_F \longrightarrow k_F$ over $\bbF_p$ with ring automorphisms $\cO_F \longrightarrow \cO_F$ over $\bbZ_p$).  We use similar notation $M_0', G_0'$, etc.  Analogously to the isomorphism \eqref{unip-isom}, we have an isomorphism of $\cO_F$-schemes
$$\prod_{\beta\in \underline{\Phi}^+ - \underline{\Phi}_J^+} \bfU_{\beta} \stackrel{\sim}{\longrightarrow} \bfN'_{\cO_F},$$
inducing a homeomorphism
$$\prod_{\beta\in \underline{\Phi}^+ - \underline{\Phi}_J^+} \bfU_{\beta}(\cO_F) \stackrel{\sim}{\longrightarrow} N_0'.$$
\end{Pt*}

\begin{Pt*}
\label{N_0'-sect}
Let us now fix $\alpha \in \Phi^+ - \Phi^+_J$ (without an underline!), and let $\alpha_\varsigma \in \underline{\Phi}^+ - \underline{\Phi}^+_J$ denote the root of $\underline{\bfT}_{k_F}$ which is equal to $\alpha$ in embedding $\varsigma$.  Using the inclusion $N_0 \longhookrightarrow N_0'$ and the above homeomorphism, the element $u_\alpha(x) \in N_0$, for $x \in \cO_F$, may then be written as
\begin{equation}
\label{unip-res-scalars}
u_{\alpha}(x) = \prod_{\varsigma:\cO_F \rightarrow \cO_F} u_{\alpha_\varsigma}(\varsigma(x)).
\end{equation}
The $p$-valuation $\omega: N_0 \longrightarrow \bbR_{> 0} \cup \{\infty\}$ induces a $p$-valuation on $N_0'$, given by the formula
\begin{eqnarray}
N_0' \cong \prod_{\varsigma:\cO_F\rightarrow \cO_F}(\bfN\times_{\cO_F,\varsigma}\cO_F)(\cO_F) & \longrightarrow & \bbR_{>0} \cup \{\infty\} \notag \\
\prod_{\varsigma}n_\varsigma & \longmapsto & \min_{\varsigma}\{\omega(n_\varsigma)\} \label{omega-N0'}
\end{eqnarray}
(see \cite[\S\S III.3.1.7.3, II.1.1.4, I.2.1.8]{lazard}).  We denote this $p$-valuation by $\omega$; note that the restriction of the function \eqref{omega-N0'} to $N_0$ is equal to the function $\omega$ defined in \eqref{omega-def}, so there is no conflict in notation.  In particular, the $p$-valuation $\omega$ evaluated on the left-hand side of \eqref{unip-res-scalars} (using equation \eqref{omega-def}) is equal to $\omega$ evaluated on the right-hand side of \eqref{unip-res-scalars} (using equation \eqref{omega-N0'}).  Additionally, by definition we have $\omega(m'n'm'^{-1}) = \omega(n')$ for all $n' \in N_0', m' \in M_0'$.  Finally, we note that we may construct $\cO_F\llbracket N_0'\rrbracket$ and the induced filtration $w$ exactly as for $\cO_F\llbracket N_0 \rrbracket$, and that the restriction of $w$ from $\cO_F\llbracket N_0'\rrbracket$ to $\cO_F\llbracket N_0\rrbracket$ is equal to the valuation $w$ on $\cO_F\llbracket N_0\rrbracket$ previously defined (see \cite[Cor. III.2.3.5]{lazard}).  We may therefore construct the ring filtration $\cO_F\llbracket N_0'\rrbracket_\bullet$ (and its mod $p$ analog), which satisfies
$$\cO_F\llbracket N_0'\rrbracket_\nu \cap \cO_F\llbracket N_0 \rrbracket = \cO_F\llbracket N_0 \rrbracket_\nu.$$
\end{Pt*}

\begin{Pt*}
Let us now fix a $p$-small $\lambda \in X^*(\underline{\bfT})_+$, and let $L(\lambda)$ denote the associated representation of $\underline{\bfG}(k_F)$.  By restricting the action to $\underline{\bfP}(k_F)$ and inflating to $P_0' = M_0'N_0'$, we view $L(\lambda)$ as a smooth $k_F\llbracket N_0' \rrbracket$-module with a compatible action of $M_0'$.  We recall that the $p$-valuation $\omega$ on $N_0'$ is valued in $\frac{1}{h'}\bbZ \cup \{\infty\}$, and the same is true of the valuation $w$ on $\cO_F\llbracket N_0' \rrbracket$.  We then define a decreasing filtration $\Fil_\bullet(L(\lambda))$ on $L(\lambda)$ indexed by $\frac{1}{h'}\bbZ$ as follows: if $\nu \in \frac{1}{h'}\bbZ_{\geq 0}$, we set $\Fil_\nu(L(\lambda)) = 0$, while if $\nu \in \frac{1}{h'}\bbZ_{< 0}$, we set
$$\Fil_\nu(L(\lambda)) := L(\lambda)[k_F\llbracket N_0'\rrbracket_{-\nu}] = \left\{v \in L(\lambda): \xi \cdot v = 0 \quad \textnormal{for all}\quad \xi \in k_F\llbracket N_0'\rrbracket_{-\nu}\right\}.$$ 
This defines a filtration which is compatible with the decreasing filtration $k_F\llbracket N_0'\rrbracket_\bullet$ on $k_F\llbracket N_0'\rrbracket$.  Furthermore, since the $p$-valuation $\omega$ is invariant by $M_0'$-conjugation, the filtration $k_F\llbracket N_0' \rrbracket_\bullet$ is stable by $M_0'$-conjugation, which shows that $\Fil_\bullet(L(\lambda))$ is a filtration by $M_0'$-submodules.

By passing to the associated graded module of $L(\lambda)$ relative to $\Fil_\bullet(L(\lambda))$, we obtain the space
$$\gr(L(\lambda)) := \bigoplus_{\nu \in \bbR} \gr_{\nu}(L(\lambda)) := \bigoplus_{\nu \in \bbR}\Fil_{\nu}(L(\lambda))/\Fil_{\nu + 1/h'}(L(\lambda)),$$
which is a graded $M_0'$-representation over the graded ring $\gr(k_F\llbracket N_0'\rrbracket)$.  In what follows, we will restrict the actions to the smaller groups, and consider $\gr(L(\lambda))$ as a graded $M_0$-representation over the graded ring $\gr(k_F\llbracket N_0\rrbracket)$.  In particular, by Corollary \ref{lazardcor}, $\gr(L(\lambda))$ has the structure of a graded module over the $k_F$-Lie algebra $\fn_\omega\otimes_{\bbF_p}k_F$: if $\overline{u_\alpha(x_i)}\otimes 1$ is a basis vector for $\fn_\omega\otimes_{\bbF_p}k_F$ as in \eqref{n-omega-basis}, and $v\in \Fil_\nu(L(\lambda))$ with image $\overline{v} \in \gr_{\nu}(L(\lambda))$, then
$$\left(\overline{u_\alpha(x_i)}\otimes 1\right)\star \overline{v} = \overline{(u_\alpha(x_i)\cdot v - v)}  \in \gr_{\nu + \omega(u_\alpha(x_i))}(L(\lambda))$$
(where we use the notation $\star$ to denote the Lie algebra action).  
\end{Pt*}

\begin{Pt*}
On the other hand, denoting by $\fn|_{\bbF_p}$ the Lie algebra $\fn$ considered as an $\bbF_p$-Lie algebra, we may construct an action of the Lie algebra $\fn|_{\bbF_p}\otimes_{\bbF_p} k_F$ on $L(\lambda)$.  First note that by \cite[Cor. A.7.6]{CGP}, we can identify $\fn|_{\bbF_p}$ as follows:
$$\fn|_{\bbF_p} = \textnormal{Lie}(\bfN)(k_F)|_{\bbF_p} \cong \textnormal{Lie}\left(\textnormal{Res}_{\cO_F/\bbZ_p}(\bfN)\right)(\bbF_p) \cong \textnormal{Lie}\left(\textnormal{Res}_{k_F/\bbF_p}(\bfN_{k_F})\right)(\bbF_p) = \textnormal{Lie}(\underline{\bfN})(\bbF_p).$$
(In the penultimate step we have used that $F/\bbQ_p$ is unramified.)   Thus $\fn|_{\bbF_p}\otimes_{\bbF_p}k_F$ is the Lie algebra of $\underline{\bfN}_{k_F}$ (\cite[\S II.4.1.4]{demazuregabriel}).


Then, given $n \in \fn|_{\bbF_p}\otimes_{\bbF_p}k_F = \textnormal{Lie}(\underline{\bfN}_{k_F})(k_F) = \ker(\underline{\bfN}(k_F[\varepsilon]/(\varepsilon^2)) \longrightarrow \underline{\bfN}(k_F))$, the action of $n$ on $L(\lambda)\otimes_{k_F} k_F[\varepsilon]/(\varepsilon^2) = L(\lambda) \oplus L(\lambda)\varepsilon$ is of the form 
\begin{equation}
\label{defofXn}
n\cdot (v + v'\varepsilon) = v + (v' + X_n(v))\varepsilon
\end{equation}
for some $X_n \in \End_{k_F}(L(\lambda))$ (see \cite[Prop. II.4.2.2]{demazuregabriel}).  We then define the Lie algebra action of $n \in \fn|_{\bbF_p}\otimes_{\bbF_p}k_F$ on $v \in L(\lambda)$ by 
\begin{equation}
\label{lie-alg-action-algebraic}
n\star v := X_n(v) \in L(\lambda).
\end{equation}
\end{Pt*}

\begin{Pt*}
Finally, we compare the two Lie algebra actions constructed above.

Let us define a $M_0$-equivariant isomorphism $\gamma: L(\lambda) \stackrel{\sim}{\longrightarrow} \gr(L(\lambda))$ as follows.  Since the $M_0$-representation $L(\lambda)|_{\bfM(k_F)}$ is semisimple by Corollary \ref{algrepscor}, and since the filtration $\Fil_\bullet(L(\lambda))$ is $M_0$-stable, the short exact sequence of $M_0$-representations
$$0 \longrightarrow \Fil_{\nu + 1/h'}(L(\lambda)) \longrightarrow \Fil_\nu(L(\lambda)) \longrightarrow \textnormal{gr}_\nu(L(\lambda)) \longrightarrow 0$$
splits.  By taking the direct sum over all $\nu$ of the splittings $\gr_\nu(L(\lambda)) \longrightarrow \Fil_\nu(L(\lambda)) \longhookrightarrow L(\lambda)$, we obtain an $M_0$-equivariant isomorphism $\gr(L(\lambda)) \stackrel{\sim}{\longrightarrow} L(\lambda)$, and we define $\gamma$ to be the inverse of this map.  
\end{Pt*}

\begin{Lem*}
\label{lazard-vs-alg-lie}
Let $\lambda \in X^*(\underline{\bfT})_+$ be $p$-small.
Then the map $\gamma$ intertwines the actions of $\fn|_{\bbF_p}\otimes_{\bbF_p}k_F$ and $\fn_\omega \otimes_{\bbF_p} k_F$; more precisely, we have
$$\gamma(n \star v) = \ff(n) \star \gamma(v)$$
where $n \in \fn|_{\bbF_p}\otimes_{\bbF_p}k_F, v \in L(\lambda)$, and $\ff$ is the (base change of the) isomorphism \eqref{def-of-ff}.
\end{Lem*}

\begin{proof}
Recall that we may identify $\underline{\Phi}$ with $[k_F:\bbF_p]$ copies of $\Phi$, indexed by the field homomorphisms $\varsigma: k_F \longrightarrow k_F$ over $\bbF_p$.  Let $\beta \in \underline{\Phi}^+ - \underline{\Phi}^+_J$ and $x \in k_F$.  Then 
$$u_\beta(x\varepsilon) \in \ker\left(\underline{\bfN}(k_F[\varepsilon]/(\varepsilon^2)) \longrightarrow \underline{\bfN}(k_F)\right),$$
and we let $X_\beta$ denote the endomorphism $X_{u_\beta(\varepsilon)}$ defined in equation \eqref{defofXn} above.  According to \cite[Ch. II, \S 1.19, Eqn. (6)]{jantzen}, the action of $u_\beta(x\varepsilon)$ on $v \in L(\lambda) \subset L(\lambda) \oplus L(\lambda)\varepsilon = L(\lambda) \otimes_{k_F} k_F[\varepsilon]/(\varepsilon^2)$ is given by
$$u_{\beta}(x\varepsilon)\cdot v = v + x  X_\beta(v)\varepsilon \in L(\lambda) \oplus L(\lambda)\varepsilon.$$
Using the same cited formula, the action of $u_\beta(x)$ on $v \in L(\lambda)$ is given by
\begin{eqnarray*}
u_\beta(x)\cdot v & = & v + x X_\beta(v) +  \frac{x^2}{2!}X_\beta^2(v) + \frac{x^3}{3!}X_\beta^3(v) + \ldots \\
 & = & \sum_{n \geq 0}\frac{x^n}{n!}X_\beta^n(v) \in L(\lambda). 
\end{eqnarray*}
To see that this expression is well-defined, suppose $\mu \in X^*(\underline{\bfT})$ and $v \in L(\lambda)_\mu$ is a non-zero weight vector.  Then $X_\beta^n(v) \in L(\lambda)_{\mu + n\beta}$ (\cite[Ch. II, \S 1.19, Eqn. (5)]{jantzen}).  By Lemma \ref{wt-string-lemma}, the $p$-smallness assumption guarantees that $X_\beta^p(v) \in L(\lambda)_{\mu + p\beta} = 0$.  Hence, $X_\beta^p = 0$ on $L(\lambda)$, and we may write
\begin{equation}
\label{exponential}
u_\beta(x)\cdot v = \sum_{n = 0}^{p - 1} \frac{x^n}{n!}X_\beta^n(v).
\end{equation}
Furthermore, since $(u_\beta(x) - 1)$ commutes with $X_{\beta}$, we see that $(u_\beta(x) - 1)^p$ acts by 0 on $L(\lambda)$.  Thus, we may invert equation \eqref{exponential} to obtain
\begin{eqnarray*}
xX_{\beta}(v) & = & (u_\beta(x) - 1)\cdot v - \frac{1}{2}(u_\beta(x) - 1)^2\cdot v + \frac{1}{3}(u_\beta(x) - 1)^3\cdot v - \ldots \\
 & = & \sum_{n = 1}^{p - 1}\frac{(-1)^{n + 1}}{n}(u_\beta(x) - 1)^n \cdot v.
\end{eqnarray*}

We may now calculate the action of $X_\beta$ on the filtration $\Fil_\bullet(L(\lambda))$.  For $n \geq 0$ and $x \in \cO_F$, the element $(u_\beta(x) - 1)^n \in \cO_F\llbracket N_0' \rrbracket$ has valuation $w((u_\beta(x) - 1)^n) = n\omega(u_\beta(x))$ by \cite[Thm. III.2.3.3, eqn. III.2.3.8.8]{lazard}.  This implies that if $v \in \Fil_{\nu}(L(\lambda))$, then $(u_\beta(x) - 1)^n\cdot v \in \Fil_{\nu + n\omega(u_\beta(x))}(L(\lambda))$.  From the previous equation we therefore see that
\begin{equation}
\label{lie-alg-gr-act-1}
\overline{x}X_\beta(v)  =  \sum_{n = 1}^{p - 1}\frac{(-1)^{n + 1}}{n}(u_\beta(x) - 1)^n \cdot v \in \Fil_{\nu + \omega(u_\beta(x))}(L(\lambda)). 
\end{equation}

Let us now fix $\alpha \in \Phi^+ - \Phi^+_J$ (without an underline!) and $x_i$ as in the proof of Lemma \ref{psatN0}, so that $u_\alpha(\overline{x_i}\varepsilon) \in \fn|_{\bbF_p}$ and $u_{\alpha}(\overline{x_i}\varepsilon) \otimes 1 \in \fn|_{\bbF_p} \otimes_{\bbF_p}k_F$ is a basis vector.  Letting $\alpha_\varsigma \in \underline{\Phi}^+ - \underline{\Phi}^+_J$ denote the root of $\underline{\bfT}_{k_F}$ which is equal to $\alpha$ in embedding $\varsigma$, then by equation \eqref{unip-res-scalars} we may write
\begin{equation}
\label{unip-res-scalars-modp}
u_{\alpha}(\overline{x_i}\varepsilon) \otimes 1 = \prod_{\varsigma:k_F \rightarrow k_F} u_{\alpha_\varsigma}(\varsigma(\overline{x_i})\varepsilon).
\end{equation}
Note that the terms on the right hand side commute pairwise, as do the elements $X_{\alpha_\varsigma}$.  Combining equations \eqref{defofXn} and \eqref{unip-res-scalars-modp} shows that we have
\begin{equation}
\label{lie-alg-comps}
X_{u_\alpha(\overline{x_i}\varepsilon) \otimes 1} = \sum_{\varsigma: k_F \rightarrow k_F}\varsigma(\overline{x_i})X_{\alpha_\varsigma}.
\end{equation}
Applying equation \eqref{unip-res-scalars} once again, we have
\begin{equation}
\label{unip-res-scalars-2}
u_\alpha(\overline{x_i}) = \prod_{\varsigma:k_F \rightarrow k_F}u_{\alpha_\varsigma}(\varsigma(\overline{x_i}))
\end{equation}
as elements of $\underline{\bfN}(k_F)$.  Thus, if $v \in \Fil_\nu(L(\lambda))$, we combine the above equations to obtain
\begin{eqnarray*}
(u_\alpha(\overline{x_i}) - 1)\cdot v & \stackrel{\eqref{unip-res-scalars-2}}{=} & \left(\prod_{\varsigma:k_F \rightarrow k_F}u_{\alpha_\varsigma}(\varsigma(\overline{x_i}))\right)\cdot v - v\\
& \stackrel{\eqref{exponential}}{=} & \left(\prod_{\varsigma:k_F \rightarrow k_F}~\sum_{n = 0}^{p - 1}\frac{\varsigma(\overline{x_i})^n}{n!}X_{\alpha_\varsigma}^n\right)(v)  - v\\
& \stackrel{\eqref{lie-alg-gr-act-1}}{\in} & \sum_{\varsigma:k_F \rightarrow k_F} \varsigma(\overline{x_i})X_{\alpha_\varsigma}(v) + \Fil_{\nu + \omega(u_\alpha(x_i)) + 1/h'}(L(\lambda))\\
& \stackrel{\eqref{lie-alg-comps}}{=} & X_{u_{\alpha}(\overline{x_i}\varepsilon) \otimes 1}(v) + \Fil_{\nu + \omega(u_\alpha(x_i)) + 1/h'}(L(\lambda)),
\end{eqnarray*}
where we have used the fact that $\omega(u_\alpha(x_i)) = \omega(u_{\alpha_\varsigma}(\varsigma(x_i)))$ for every $\varsigma$.  By equation \eqref{lie-alg-action-algebraic}, the element $X_{u_\alpha(\overline{x_i}\varepsilon)\otimes 1}(v)$ is exactly the Lie algebra action of $u_{\alpha}(\overline{x_i}\varepsilon)\otimes 1$ on $v$.  Thus, we finally get
\begin{eqnarray*}
\gamma\left((u_\alpha(\overline{x_i}\varepsilon)\otimes 1)\star v\right) & = & \overline{X_{u_\alpha(\overline{x_i}\varepsilon)\otimes1}(v)} \\
 & = & \overline{(u_{\alpha}(x_i) - 1)\cdot v} \\
 & = & \left(\overline{u_\alpha(x_i)}\otimes 1\right)\star \overline{v}\\
 & = & \ff(u_{\alpha}(\overline{x_i}\varepsilon)\otimes 1)\star \gamma(v).
\end{eqnarray*}
\end{proof}

\subsection{Cohomology of $\fn$} \label{coh-n}

We now calculate the Lie algebra cohomology of $\fn|_{\bbF_p} \otimes_{\bbF_p}k_F$ with coefficients in a representation with $p$-small highest weight.

Before stating the result, we set some notation.  Recall that $W = W(\underline{\bfG},\underline{\bfT})$ denotes the Weyl group of $\underline{\bfG}_{k_F}$ relative to $\underline{\bfT}_{k_F}$.  Given $J \subset \Delta$ with associated Levi subgroup $\bfM$, by a slight abuse of notation we let ${}^JW$ denote the set of minimal length coset representatives for $W(\underline{\bfM},\underline{\bfT})$ in $W$.  Using the decomposition \eqref{resGdecomp}, the set ${}^JW$ decomposes as $[k_F:\bbF_p]$ copies of ${}^JW(\bfG,\bfT)$, the set of minimal length coset representatives for $W(\bfM,\bfT)$ in $W(\bfG,\bfT)$.  Finally, given $w \in W$ and $\lambda \in X^*(\underline{\bfT})$, we define the dot action by
$$w\cdot \lambda := w(\lambda + \rho) - \rho.$$

\begin{Th*}
    \label{kostant}
    Suppose Assumption \ref{assn1} holds, and let $\lambda \in X^*(\underline{\bfT})_+$ be $p$-small.  Let $H^i(\fn|_{\bbF_p} \otimes_{\bbF_p}k_F, L(\lambda))$ denote the $k_F$-linear Lie algebra cohomology with coefficients in the representation $L(\lambda)$.  Then we have an isomorphism of $\underline{\bfM}(k_F)$-representations over $k_F$:
    $$H^i\left(\fn|_{\bbF_p}\otimes_{\bbF_p} k_F,~L(\lambda)\right) \cong \bigoplus_{\substack{w\in {}^JW\\ \ell(w) = i}}L_J(w\cdot \lambda)\ .$$
\end{Th*}

We note that the characters $w\cdot \lambda$ with $w \in {}^JW$ are dominant and $p$-small with respect to $\underline{\Phi}^+_J$, and furthermore that such a character encodes a ``Galois twist.''  For example, if we take $\bfM = \bfT$, $\lambda = 0$ and $i = 1$, then the characters of $\bfT(k_F) = \underline{\bfT}(\bbF_p)$ appearing in $H^1(\fn|_{\bbF_p}\otimes_{\bbF_p}k_F ,L(0))$ are exactly
\begin{eqnarray*}
\bfT(k_F) & \longrightarrow & k_F^\times\\
t & \longmapsto & \alpha(t)^{-p^j}
\end{eqnarray*}
for $\alpha \in \Delta$ and $0 \leq j < [k_F:\bbF_p] = [F:\bbQ_p]$.

\begin{proof}
We prove the theorem in several stages.

\textit{Step 0.}  We first claim that it suffices to prove the claim after base-changing to $\overline{k_F}$.  Indeed, suppose we have an isomorphism of $\underline{\bfM}(\overline{k_F})$-representations over $\overline{k_F}$
\begin{equation}
\label{kostant-base-change-to-alg-closed}
H^i\left(\fn|_{\bbF_p}\otimes_{\bbF_p} \overline{k_F},~L(\lambda) \otimes_{k_F} \overline{k_F}\right) \cong \bigoplus_{\substack{w\in {}^JW\\ \ell(w) = i}}L_J(w\cdot \lambda)\otimes_{k_F} \overline{k_F}.
\end{equation}
By restricting to $\underline{\bfM}(k_F)$, we see the above as an isomorphism of $\underline{\bfM}(k_F)$-representations over $\overline{k_F}$.  Now, by Corollary \ref{algrepscor}\ref{algrepscor-b}, the $\underline{\bfM}(k_F)$-representation $\bigoplus_{w \in {}^JW, \ell(w) = i} L_J(w\cdot \lambda)$ is semisimple, and by \cite[\S 12.1, Prop. 1]{bourbaki:algch8} and \eqref{kostant-base-change-to-alg-closed}, the $\underline{\bfM}(k_F)$-representation $H^i(\fn|_{\bbF_p}\otimes_{\bbF_p} k_F,L(\lambda))$ is also semisimple.  For any $m\in \underline{\bfM}(k_F)$, the characteristic polynomials of $m$ on these two semisimple representations agree (again using \eqref{kostant-base-change-to-alg-closed}), and we conclude that they must be isomorphic by the Brauer--Nesbitt theorem (see \cite[Thm. 30.16]{curtisreiner}).

\textit{Step 1.}  Suppose that $\bfG_{k_F}$ is absolutely simple and simply connected.  Using \eqref{resGdecomp}, we have decompositions
\begin{eqnarray*}
\underline{\bfG}_{k_F} & \cong & \prod_{\varsigma:k_F \rightarrow k_F}\bfG_{k_F}\times_{k_F,\varsigma}k_F =: \prod_{\varsigma:k_F \rightarrow k_F}\bfG_{\varsigma},\\
\underline{\bfM}_{k_F} & \cong & \prod_{\varsigma:k_F \rightarrow k_F}\bfM_{k_F}\times_{k_F,\varsigma}k_F =: \prod_{\varsigma:k_F \rightarrow k_F}\bfM_{\varsigma},\\
 \underline{\bfT}_{k_F} & \cong & \prod_{\varsigma:k_F \rightarrow k_F}\bfT_{k_F}\times_{k_F,\varsigma}k_F =: \prod_{\varsigma:k_F \rightarrow k_F}\bfT_{\varsigma},\\
\fn|_{\bbF_p} \otimes_{\bbF_p} k_F & \cong &  \bigoplus_{\varsigma:k_F \rightarrow k_F} \fn \otimes_{k_F,\varsigma} k_F =: \bigoplus_{\varsigma:k_F \rightarrow k_F} \fn_{\varsigma}, \\
 L(\lambda) & \cong & \bigotimes_{\varsigma:k_F\rightarrow k_F}L(\lambda_{\varsigma}),
\end{eqnarray*}
where $\lambda = (\lambda_{\varsigma})_{\varsigma} \in X^*(\underline{\bfT}) = \bigoplus_{\varsigma:k_F \rightarrow k_F} X^*(\bfT_\varsigma)$.

Recall (from, e.g., \cite[Cor. 7.7.3]{weibel}) that the $\overline{k_F}$-linear cohomology $H^i(\fn|_{\bbF_p}\otimes_{\bbF_p}\overline{k_F}, L(\lambda)\otimes_{k_F} \overline{k_F})$ may be computed as the $i^{\textnormal{th}}$ cohomology of the Chevalley--Eilenberg complex
$$\Hom_{\overline{k_F}}\left(\sideset{}{^\bullet_{\overline{k_F}}}{\bigwedge}(\fn|_{\bbF_p}\otimes_{\bbF_p}\overline{k_F}),~L(\lambda)\otimes_{k_F} \overline{k_F}\right).$$  
Thus, using the K\"unneth formula, we obtain
\begin{eqnarray}
 H^i\left(\fn|_{\bbF_p} \otimes_{\bbF_p} \overline{k_F},~L(\lambda)\otimes_{k_F} \overline{k_F}\right)  & \cong & H^i\left(\bigoplus_{\varsigma:k_F \rightarrow k_F} \fn_\varsigma\otimes_{k_F} \overline{k_F},~\bigotimes_{\varsigma:k_F \rightarrow k_F}L(\lambda_\varsigma)\otimes_{k_F} \overline{k_F}\right) \notag \\ 
    & \cong & \bigoplus_{\sum_{\varsigma}i_\varsigma = i} \bigotimes_{\varsigma}H^{i_\varsigma}\left(\fn_\varsigma\otimes_{k_F} \overline{k_F},~ L(\lambda_\varsigma)\otimes_{k_F} \overline{k_F}\right), \label{liealgkunneth}
\end{eqnarray}
where the tensor product in the last line is taken over $\overline{k_F}$.  By \cite[Thm. 4.2.1]{ugavigre}, applied to the simple simply connected group 
$\bfG_{\varsigma,\overline{k_F}}$, the space $H^{i_\varsigma}(\fn_{\varsigma}\otimes_{k_F} \overline{k_F},L(\lambda_\varsigma)\otimes_{k_F} \overline{k_F})$ decomposes as 
\begin{equation}
    \label{twistedkostant}
    H^{i_\varsigma}\left(\fn_{\varsigma}\otimes_{k_F} \overline{k_F},~L(\lambda_{\varsigma})\otimes_{k_F} \overline{k_F}\right) \cong \bigoplus_{\substack{w_\varsigma \in {}^{J}W(\bfG_{\varsigma},\bfT_{\varsigma})\\ \ell(w_\varsigma) = i_\varsigma}} L_{J}(w_\varsigma \cdot \lambda_\varsigma) \otimes_{k_F} \overline{k_F}
\end{equation}
as representations of the group $\bfM_\varsigma(\overline{k_F})$.  Substituting \eqref{twistedkostant} into \eqref{liealgkunneth}, we obtain
\begin{eqnarray*}
    H^i\left(\fn|_{\bbF_p}\otimes_{\bbF_p}\overline{k_F},~L(\lambda)\otimes_{k_F} \overline{k_F}\right) & \cong & \bigoplus_{\sum_{\varsigma}i_\varsigma = i} \bigotimes_{\varsigma} \bigoplus_{\substack{w_\varsigma \in {}^{J}W(\bfG_{\varsigma},\bfT_{\varsigma})\\ \ell(w_\varsigma) = i_\varsigma}} L_{J}(w_\varsigma \cdot \lambda_{\varsigma})\otimes_{k_F} \overline{k_F} \\
    & \cong & \bigoplus_{\substack{w = (w_\varsigma)_\varsigma \in {}^JW(\underline{\bfG},\underline{\bfT}) \\ \ell(w) = i}} \bigotimes_{\varsigma} L_{J}(w_\varsigma\cdot \lambda_{\varsigma})\otimes_{k_F} \overline{k_F}.
\end{eqnarray*}
Note that the action of $\underline{\bfM}(\overline{k_F})$ on the tensor product in the last line above is exactly $L_J(w\cdot \lambda)\otimes_{k_F} \overline{k_F}$.  This gives the result in the absolutely simple, simply connected case.

\textit{Step 2.}  Suppose now that $\bfG_{k_F}$ is semisimple and simply connected.  In this case, we have decompositions
$$\bfG_{k_F} \cong \bfG_1\times \bfG_2\times \cdots \times \bfG_r,$$
where each $\bfG_i$ is split over $k_F$, absolutely simple, and simply connected (this follows from \cite[Thm. 5.1.17]{conrad:sga3} by considering cocharacter lattices).  We obtain analogous decompositions of groups
$$\bfM_{k_F} \cong \bfM_1\times \bfM_2\times \cdots \times \bfM_r, \qquad \bfN_{k_F} \cong \bfN_1\times \bfN_2\times \cdots \times \bfN_r,$$
$$\bfT_{k_F} \cong \bfT_1\times \bfT_2\times \cdots \times \bfT_r, \qquad W \cong W(\underline{\bfG}_1,\underline{\bfT}_1) \times W(\underline{\bfG}_2,\underline{\bfT}_2) \times \cdots \times W(\underline{\bfG}_r,\underline{\bfT}_r),$$
along with a Lie algebra decomposition
$$\fn \cong \fn_1 \oplus \fn_2 \oplus \ldots \oplus \fn_r,$$
and a decomposition of character lattices
$$X^*(\underline{\bfT}) \cong X^*(\underline{\bfT}_1) \oplus X^*(\underline{\bfT}_2) \oplus \ldots \oplus X^*(\underline{\bfT}_r).$$
Therefore, the K\"unneth formula gives a $\underline{\bfM}(\overline{k_F})$-equivariant isomorphism
$$H^i\left(\fn|_{\bbF_p}\otimes_{\bbF_p}\overline{k_F},~L(\lambda)\otimes_{k_F} \overline{k_F}\right) \cong \bigoplus_{\sum_{j = 1}^r i_j = i}~\bigotimes_{j = 1}^r H^{i_j}\left(\fn_j|_{\bbF_p}\otimes_{\bbF_p}\overline{k_F},~L(\lambda_j)\otimes_{k_F} \overline{k_F}\right).$$
The desired claim now follows from Step 1 applied to each $H^{i_j}(\fn_j|_{\bbF_p}\otimes_{\bbF_p}\overline{k_F},~L(\lambda_j)\otimes_{k_F} \overline{k_F})$.

\textit{Step 3.} Next, suppose that $\bfG_{k_F}$ has simply connected derived subgroup $\bfG_{k_F}^{\textnormal{der}}$.  The inclusion 
$\bfG_{k_F}^{\textnormal{der}} \longhookrightarrow \bfG_{k_F}$ induces an equality $\bfN_{k_F} \cap \bfG_{k_F}^{\textnormal{der}} = \bfN_{k_F}$, and we therefore obtain an isomorphism of $(\underline{\bfM} \cap \underline{\bfG}^{\textnormal{der}})(\overline{k_F})$-representations
$$H^i\left(\fn|_{\bbF_p}\otimes_{\bbF_p}\overline{k_F},~L(\lambda)\otimes_{k_F} \overline{k_F}\right)|_{(\underline{\bfM} \cap \underline{\bfG}^{\textnormal{der}})(\overline{k_F})} \cong H^i\left(\fn^{\textnormal{der}}|_{\bbF_p}\otimes_{\bbF_p}\overline{k_F},~L(\lambda|_{\underline{\bfT} \cap \underline{\bfG}^{\textnormal{der}}})\otimes_{k_F} \overline{k_F}\right),$$
where $\fn^{\textnormal{der}}$ denotes the Lie algebra of $\bfN_{k_F}\cap \bfG_{k_F}^{\textnormal{der}}$, and $\underline{\bfT} \cap \underline{\bfG}^{\textnormal{der}}$ is a maximal torus of $\underline{\bfG}^{\textnormal{der}}$, cf. \cite[Prop. 5.3.4]{conrad:sga3} (as above we use underlines to denote scalar restriction from $k_F$ to $\bbF_p$).  By Step 2, we obtain 
$$H^i\left(\fn|_{\bbF_p}\otimes_{\bbF_p}\overline{k_F},~L(\lambda)\otimes_{k_F} \overline{k_F}\right)|_{(\underline{\bfM} \cap \underline{\bfG}^{\textnormal{der}})(\overline{k_F})} \cong \bigoplus_{\substack{w \in {}^JW(\underline{\bfG}^{\textnormal{der}}, \underline{\bfT} \cap \underline{\bfG}^{\textnormal{der}}) \\ \ell(w) = i}} L_J(w\cdot (\lambda|_{\underline{\bfT} \cap \underline{\bfG}^{\textnormal{der}}}))\otimes_{k_F} \overline{k_F}.$$
On the other hand, if we denote by $\bfZ$ the center of $\bfG$, then the action of $\underline{\bfZ}(\overline{k_F})$ on $H^i(\fn|_{\bbF_p}\otimes_{\bbF_p}\overline{k_F},L(\lambda)\otimes_{k_F} \overline{k_F})$ is given by the character $\lambda|_{\underline{\bfZ}(\overline{k_F})}$.  Since $\underline{\bfZ}(\overline{k_F})\underline{\bfG}^{\textnormal{der}}(\overline{k_F}) = \underline{\bfG}(\overline{k_F})$, we conclude that
$$H^i\left(\fn|_{\bbF_p}\otimes_{\bbF_p}\overline{k_F},~L(\lambda)\otimes_{k_F} \overline{k_F}\right) \cong \bigoplus_{\substack{w \in {}^JW \\ \ell(w) = i}} L_J(w\cdot \lambda)\otimes_{k_F} \overline{k_F}$$
as representations of $\underline{\bfM}(\overline{k_F})$ (using the canonical identification of $W$ with $W(\underline{\bfG}^{\textnormal{der}}, \underline{\bfT} \cap \underline{\bfG}^{\textnormal{der}})$).  This gives the claim when $\bfG_{k_F}^{\textnormal{der}}$ is simply connected.

\textit{Step 4.} Finally, suppose $\bfG_{k_F}$ is an arbitrary split connected reductive group, and choose a $z$-extension
$$1 \longrightarrow \widetilde{\bfZ} \longrightarrow \widetilde{\bfG} \stackrel{q}\longrightarrow \bfG_{k_F} \longrightarrow 1,$$
where $\widetilde{\bfG}$ is a split connected reductive group over $k_F$ with simply connected derived subgroup, and $\widetilde{\bfZ}$ is a split central torus (as in the proof of Corollary \ref{algrepscor}\ref{algrepscor-b}). 
Restricting scalars to $\bbF_p$, we obtain a short exact sequence
$$1 \longrightarrow \underline{\widetilde{\bfZ}} \longrightarrow \underline{\widetilde{\bfG}} \stackrel{\underline{q}}{\longrightarrow} \underline{\bfG} \longrightarrow 1.$$
The inflation along $\underline{q}_{k_F}:=\underline{q}\otimes_{\bbF_p}k_F$ of $L(\lambda)$ is the irreducible algebraic representation $L(\widetilde{\lambda})$ of 
$\underline{\widetilde{\bfG}}_{k_F}$ where $\widetilde{\lambda}$ is the inflation of $\lambda$ along $\underline{q}_{k_F}$.  Since $q$ identifies unipotent radicals, and $\widetilde{\lambda}$ is trivial on $\underline{\widetilde{\bfZ}}_{k_F}$, we have an isomorphism of $\underline{q}^{-1}(\underline{\bfM})(\overline{k_F})$-representations
$$H^i\left(\fn|_{\bbF_p}\otimes_{\bbF_p}\overline{k_F},~L(\lambda)\otimes_{k_F} \overline{k_F}\right) \cong H^i\left(\widetilde{\fn}|_{\bbF_p}\otimes_{\bbF_p}\overline{k_F},L(\widetilde{\lambda})\otimes_{k_F} \overline{k_F}\right)$$
where the left hand side is inflated from $\underline{\bfM}(\overline{k_F})$ and $\widetilde{\fn}:=\textnormal{Lie}(q^{-1}(\bfN_{k_F}))(k_F)$ in the right hand side.  Applying Step 3 to $\widetilde{\bfG}$, we get an isomorphism of $\underline{q}^{-1}(\underline{\bfM})(\overline{k_F})$-representations
\begin{equation}
\label{z-ext-H1}
H^i\left(\fn|_{\bbF_p}\otimes_{\bbF_p}\overline{k_F},~L(\lambda)\otimes_{k_F} \overline{k_F}\right) \cong \bigoplus_{\substack{w \in {}^JW(\underline{\widetilde{\bfG}}, \underline{q}^{-1}(\underline{\bfT})) \\ \ell(w) = i}} L_J(w\cdot \widetilde{\lambda})\otimes_{k_F} \overline{k_F}.
\end{equation}
Equivalently, we have an isomorphism of $\underline{\bfM}(\overline{k_F})$-representations
$$H^i\left(\fn|_{\bbF_p}\otimes_{\bbF_p}\overline{k_F},~L(\lambda)\otimes_{k_F} \overline{k_F}\right) \cong \bigoplus_{\substack{w \in {}^JW \\ \ell(w) = i}} L_J(w\cdot \lambda)\otimes_{k_F} \overline{k_F}$$
(using the canonical identification of $W$ with $W(\underline{\widetilde{\bfG}}, \underline{q}^{-1}(\underline{\bfT}))$). 
This finishes the final step and concludes the proof.
\end{proof}

\begin{Cor*}
\label{kostant-cor}
Suppose Assumption \ref{assn1} holds, and let $\lambda \in X^*(\underline{\bfT})_+$ be $p$-small.  Then we have an isomorhpism of $k_F$-linear $M_0$-representations
    $$H^i\left(\fn_\omega\otimes_{\bbF_p} k_F,~\gr(L(\lambda))\right) \cong \bigoplus_{\substack{w\in {}^JW\\ \ell(w) = i}}L_J(w\cdot \lambda).$$
\end{Cor*}

\begin{proof}
This follows from Theorem \ref{kostant} and Lemma \ref{lazard-vs-alg-lie}, using that the map $\gamma$ is both $M_0$-equivariant and equivariant for the two Lie algebra actions.
\end{proof}

\subsection{Cohomology of $N_0$} \label{coh-N_0}

Our next goal is to calculate the cohomology of $N_0$ with coefficients in a representation with $p$-small highest weight.  We begin with a few lemmas.

Recall the notation $\bfG' := \textnormal{Res}_{\cO_F/\bbZ_p}(\bfG)$, $\bfM' := \textnormal{Res}_{\cO_F/\bbZ_p}(\bfM)$, $\bfT' := \textnormal{Res}_{\cO_F/\bbZ_p}(\bfT)$, etc. In particular $\bfT'$ is a torus over $\bbZ_p$ which splits over $\cO_F$. Consequently, the abstract group of characters of $\bfT'_{\cO_F}$ identifies canonically with the one of $\bfT'_{F}$ and of $\bfT'_{k_F}=\underline{\bfT}_{k_F}$, which we have simply denoted by $X^*(\underline{\bfT})$.

\begin{Lem*}
    \label{FpQpdims}
    Suppose Assumption \ref{assn1} holds.  Let $J\subset \Delta$, $\mu \in X^*(\underline{\bfT})$ dominant and $p$-small with respect to 
    $\underline{\Phi}_J^+$, and $L_{J}(\mu)$ (resp., $L_{J,F}(\mu)$) the irreducible algebraic representation of $\bfM'_{k_F} \cong \underline{\bfM}_{k_F}$ (resp., of $\bfM'_F$) of highest weight $\mu$.  Then 
    $$\dim_{k_F}\left(L_{J}(\mu)\right) = \dim_{F}\left(L_{J,F}(\mu)\right).$$
\end{Lem*}

\begin{proof}
    This follows exactly as in \cite[Prop. 1.6]{polotilouine}.  Namely, let $\cO_F(\mu)$ denote the rank 1 $\cO_F$-module on which $\bfT'_{\cO_F}$ acts by the character $\mu$. Then define 
    $$
    H^0(\mu) := \ind_{(\bfB'^- \cap \bfM')_{\cO_F}}^{\bfM'_{\cO_F}}(\cO_F(\mu)),
    $$  
    where $\bfB'^{-} := \textnormal{Res}_{\cO_F/\bbZ_p}(\bfB^-)$ for the Borel subgroup $\bfB^-$ which is opposite to $\bfB$. It is a free $\cO_F$-module, cf.  \cite[\S II.8.8, Eqn. (1)]{jantzen}.

    On the one hand, we have the following sequence of isomorphisms:
    $$H^0(\mu)\otimes_{\cO_F} F \cong \ind_{(\bfB'^- \cap \bfM')_{F}}^{\bfM'_{F}}(F(\mu)) \cong L_{J,F}(\mu).$$
    The first isomorphism comes from flat base change (\cite[\S I.3.5, Eqn. (3)]{jantzen}), while the second is the Borel--Weil--Bott theorem (\cite[Cor. II.5.6]{jantzen}).

    On the other hand, by Kempf's vanishing theorem (\cite[\S II.8.8, Eqn. (2)]{jantzen}) we have 
    $$R^1\ind_{(\bfB'^- \cap \bfM')_{\cO_F}}^{\bfM'_{\cO_F}}(\cO_F(\mu)) = 0.$$
    Thus, applying the universal coefficient theorem (as in \cite[Prop. I.4.18(b)]{jantzen}), and the Borel--Weil--Bott theorem (\cite[Cor. II.5.6]{jantzen}), we get
    \begin{eqnarray*}
    H^0(\mu)\otimes_{\cO_F}k_F & \cong & H^0(\mu)\otimes_{\cO_F}k_F~ \oplus~ \textnormal{Tor}_1^{\cO_F}\left(R^1\ind_{(\bfB'^- \cap \bfM')_{\cO_F}}^{\bfM'_{\cO_F}}(\cO_F(\mu)),~ k_F\right) \\
     & \cong & \ind_{(\bfB'^- \cap \bfM')_{k_F}}^{\bfM'_{k_F}}(k_F(\mu)) \\
     & \cong & L_{J}(\mu).
    \end{eqnarray*}

    Combining the previous two paragraphs with the fact that $H^0(\mu)$ is free over $\cO_F$, we obtain
    $$\dim_{k_F}\left(L_{J}(\mu)\right) = \textnormal{rk}_{\cO_F}\left(H^0(\mu)\right)= \dim_{F}\left(L_{J,F}(\mu)\right).$$
\end{proof}

\begin{Rem*}
\label{FpQpdims-remark}
Suppose $\lambda \in X^*(\underline{\bfT})_+$ is $p$-small, and fix $J \subset \Delta$.  Then for every $w \in {}^JW$, the characters $w\cdot \lambda$ are dominant and $p$-small with respect to $\underline{\Phi}^+_J$ by Lemma \ref{p-small-J}, and therefore
    $$\dim_{k_F}\left(L_{J}(w\cdot \lambda)\right) = \dim_{F}\left(L_{J,F}(w\cdot \lambda)\right).$$
\end{Rem*}

\begin{Lem*} \label{compn:cts-to-lie-alg-Qp}
Suppose Assumption \ref{assn1} holds. Let $V$ be a finite dimensional algebraic representation of the algebraic group $\bfN_{\bbQ_p}'$.  Then we have an isomorphism
\begin{equation*}
    H^i_{\textnormal{cts}}\left(N_0,~V\right) \cong  H^i\left(\fN'_{\bbQ_p},~ V\right)
    \end{equation*}
    where $\fN'_{\bbQ_p} := \textnormal{Lie}(\bfN')(\bbQ_p)$ denotes the Lie algebra of $\bfN_{\bbQ_p}'$.
\end{Lem*}

\begin{proof}
Recall that $(N_0,\omega)$ is a $p$-saturated $p$-valued group by Lemma \ref{psatN0}. As such, it admits a $\bbZ_p$-Lie algebra $\cL^*(N_0) := \cL^*\textnormal{Sat}(\bbZ_p\llbracket N_0\rrbracket)$ in the sense of Lazard \cite[\S\S I.2.2.11, IV.1.3.1]{lazard}. Then, by \cite[Thms. V.2.4.9, V.2.4.10]{lazard}, there is a canonical isomorphism
\begin{equation*}
    H^i_{\textnormal{cts}}\left(N_0,~V\right) \cong  H^i\left(\cL^*(N_0)\otimes_{\bbZ_p}\bbQ_p,~ V\right)^{N_0},
    \end{equation*}
    where the superscript $N_0$ stands for the subspace of $N_0$-invariants. 
    
    Now, since $N_0=\bfN'(\bbZ_p)$ and $(N_0,\omega)$ is $p$-saturated, by \cite[Prop. 4.3.1, Lem. 4.1.3]{HK} there is a natural identification
$$
\cL^*(N_0)\otimes_{\bbZ_p}\bbQ_p \cong \fN' \otimes_{\bbZ_p}\bbQ_p \cong \fN'_{\bbQ_p},
$$
where $\fN' := \textnormal{Lie}(\bfN')(\bbZ_p)$.  Furthermore, this isomorphism is compatible with the natural actions of $N_0$ on $\cL^*(N_0)$ and of $\bfN'$ on $\textnormal{Lie}(\bfN')$; explicitly, it identifies the logarithms attached to $(N_0,\omega)$ with the derivations of $\bfN'$.  In particular, the action of $N_0$ on $H^i(\cL^*(N_0)\otimes_{\bbZ_p}\bbQ_p, V) \cong H^i(\fN'_{\bbQ_p}, V)$ is algebraic (coming from an algebraic action of $\bfN'_{\bbQ_p}$). Thus, if we let $\bfK \subset \bfN'_{\bbQ_p}$ denote the closed subgroup scheme given by the kernel of the $\bfN'_{\bbQ_p}$-action, we see that the kernel for the $N_0$-action is $N_0 \cap \bfK(\bbQ_p)$. On the other hand, by \cite[Thm. V.2.4.10(iii)]{lazard}, this kernel contains an open subgroup of $N_0$ (that is, the $N_0$-action is smooth).  Since such an open subgroup is Zariski dense in $\bfN'_{\bbQ_p}$ (\cite[Lem. 3.2]{platonovrapinchuk}), we conclude that $\bfK = \bfN_{\bbQ_p}'$, which finishes the proof.\footnote{The prototype of this argument goes back to \cite[\S 3]{CW74}, as noted in \cite[Rem. 5.1.3]{HK}.}
\end{proof}

\begin{Rem*}
The proof of \cite[Prop. 4.3.1]{HK} shows that we even have an isomorphism at integral level:
$$\cL^*(N_0) \cong \textnormal{Lie}(\bfN')(\bbZ_p) = \fN'.$$
This isomorphism is moreover compatible with the actions of $N_0$ on $\cL^*(N_0)$ and of $\bfN$ on $\textnormal{Lie}(\bfN')$.
\end{Rem*}

\begin{Lem*}
    \label{lie-leq-grp}
    Suppose Assumption \ref{assn1} holds, and let $\lambda \in X^*(\underline{\bfT})_+$ be $p$-small.  Then
    $$\dim_{k_F}\left(H^i(\fn|_{\bbF_p}\otimes_{\bbF_p}k_F,~L(\lambda))\right) \leq \dim_{k_F}\left(H^i(N_0,~L(\lambda))\right).$$
\end{Lem*}

\begin{proof}
    Our argument is similar to \cite[Pfs. of Thms. 1, 4]{R19b}.

Let $\fN := \textnormal{Lie}(\bfN)(\cO_F)$ denote the $\cO_F$-Lie algebra of $\bfN$, so that $\fN \otimes_{\cO_F}k_F \cong \fn$.  We let $\fN|_{\bbZ_p}$ denote the Lie algebra $\fN$ considered as a $\bbZ_p$-Lie algebra; using \cite[Cor. A.7.6]{CGP}, we have 
    $$\fN|_{\bbZ_p} \cong \textnormal{Lie}(\bfN)(\cO_F)|_{\bbZ_p} \cong \textnormal{Lie}(\bfN')(\bbZ_p),$$
where $\bfN' := \textnormal{Res}_{\cO_F/\bbZ_p}(\bfN)$.  Thus, we have $\fN|_{\bbZ_p}\otimes_{\bbZ_p}k_F \cong \fn|_{\bbF_p}\otimes_{\bbF_p}k_F$.  

We first claim that the algebraic $\underline{\bfG}_{k_F}$-representation $L(\lambda)$ lifts to characteristic 0, that is, that there exists an algebraic representation of $\bfG'_{\cO_F}$ on a finite free $\cO_F$-module whose mod $p$ reduction is $L(\lambda)$.  In fact, this was already constructed in the proof of Lemma \ref{FpQpdims}: we may take $H^0(\lambda) = \ind_{\bfB'^-_{\cO_F}}^{\bfG'_{\cO_F}}(\cO_F(\lambda))$ as our lift.

Next, we claim that the $\cO_F$-linear cohomology $H^i(\fN|_{\bbZ_p} \otimes_{\bbZ_p}\cO_F , H^0(\lambda))$ is free of finite rank over $\cO_F$.  Since $H^i(\fN|_{\bbZ_p}\otimes_{\bbZ_p}\cO_F, H^0(\lambda))$ is finitely generated over $\cO_F$, this is equivalent to showing that the cohomology is $p$-torsion free.  We proceed by descending induction.  The claim is trivially true when $i > \textnormal{rk}_{\cO_F}(\fN|_{\bbZ_p}\otimes_{\bbZ_p}\cO_F) = |\underline{\Phi}^+ - \underline{\Phi}^+_J|$ by the existence of the Chevalley--Eilenberg complex. Suppose that the claim is valid for all degrees greater than $i$.  Then, using the universal coefficient theorem, 
we have 
    \begin{eqnarray*}
        H^i\left(\fN|_{\bbZ_p}\otimes_{\bbZ_p} k_F,~L(\lambda)\right) & \cong & H^i\left(\fN|_{\bbZ_p}\otimes_{\bbZ_p}\cO_F,~H^0(\lambda)\right) \otimes_{\cO_F} k_F \\
        & & ~ \oplus~ \textnormal{Tor}_1^{\cO_F}\left(H^{i + 1}\left(\fN|_{\bbZ_p}\otimes_{\bbZ_p}\cO_F,~H^0(\lambda)\right),~k_F\right) \\
 & \cong & H^i\left(\fN|_{\bbZ_p}\otimes_{\bbZ_p}\cO_F,~H^0(\lambda)\right) \otimes_{\cO_F} k_F \\
  & & \\
        H^i\left(\fN|_{\bbZ_p}\otimes_{\bbZ_p} F,~L_{F}(\lambda)\right) & \cong & H^i\left(\fN|_{\bbZ_p}\otimes_{\bbZ_p}\cO_F,~H^0(\lambda)\right) \otimes_{\cO_F} F \\
        & & ~ \oplus~ \textnormal{Tor}_1^{\cO_F}\left(H^{i + 1}\left(\fN|_{\bbZ_p}\otimes_{\bbZ_p}\cO_F,~H^0(\lambda)\right),~F\right) \\
 & \cong & H^i\left(\fN|_{\bbZ_p}\otimes_{\bbZ_p} \cO_F,~H^0(\lambda)\right) \otimes_{\cO_F} F,
    \end{eqnarray*}
    where on the left-hand side we consider the $k_F$-linear (resp., $F$-linear) cohomology of the $k_F$-Lie algebra $\fN|_{\bbZ_p}\otimes_{\bbZ_p} k_F$ (resp., the $F$-Lie algebra $\fN|_{\bbZ_p}\otimes_{\bbZ_p} F$). 
    Thus, in order to verify the claim about torsion-freeness, it suffices to show that 
    \begin{equation}
    \label{FpQpLiealg}
        \dim_{k_F}\left(H^i(\fN|_{\bbZ_p}\otimes_{\bbZ_p} k_F,~L(\lambda))\right) = \dim_{F}\left(H^i(\fN|_{\bbZ_p}\otimes_{\bbZ_p} F,~L_F(\lambda))\right).
    \end{equation}
    The above equality essentially follows from \cite[Thm. 4.2.1]{ugavigre}, using an algebraic isomorphism $\overline{F} \cong \bbC$.  Indeed, by using Weil restrictions and proceeding as in the proof of Theorem \ref{kostant}, along with the universal coefficient theorem to base change to algebraically closed fields, it suffices to verify that $\dim_{k_F}(L_{J}(w\cdot \lambda)) = \dim_{F}(L_{J,F}(w\cdot \lambda))$.  This follows from Remark \ref{FpQpdims-remark}.

    Next, we consider the $F$-linear cohomology $H^i(\fN|_{\bbZ_p}\otimes_{\bbZ_p} F, L_F(\lambda))$.  By adjunction, this cohomology group identifies with the $\bbQ_p$-linear cohomology $H^i(\fN|_{\bbZ_p} \otimes_{\bbZ_p}\bbQ_p, L_F(\lambda))$, where we view $L_F(\lambda)$ as a $\bbQ_p$-vector space by restriction.  Thus, Lemma \ref{compn:cts-to-lie-alg-Qp} applies, giving an isomorphism
    \begin{equation}
      \label{compn:lie-alg-to-cts}
    H^i\left(\fN|_{\bbZ_p}\otimes_{\bbZ_p}\bbQ_p,~ L_F(\lambda)\right) \cong H^i_{\textnormal{cts}}\left(N_0,~L_F(\lambda)\right).
    \end{equation}
    

    Now let $H^i_{\textnormal{cts}}(N_0,H^0(\lambda))$ denote the continous cohomology of $H^0(\lambda)$ (viewed as a $\bbZ_p$-module).  This cohomology group may be calculated using a quasi-minimal complex \cite[\S V.2.2.2, Eqn. V.2.2.3.1]{lazard}.  Therefore, two more applications of the universal coefficient theorem give
    \begin{eqnarray}
        H^i_{\textnormal{cts}}(N_0,~L_F(\lambda)) & \cong & H^i_{\textnormal{cts}}\left(N_0,~H^0(\lambda)\right)\otimes_{\bbZ_p} \bbQ_p \label{compn:cts-to-Zp}\\
        H^i\left(N_0,~L(\lambda)\right) & \cong & H^i_{\textnormal{cts}}\left(N_0,~H^0(\lambda)\right)\otimes_{\bbZ_p} \bbF_p ~ \oplus~ \textnormal{Tor}_1^{\bbZ_p}\left(H^{i + 1}_{\textnormal{cts}}(N_0,~H^0(\lambda)),~\bbF_p\right). \label{compn:Zp-to-Fp}
    \end{eqnarray}

    Finally, we combine the above isomorphisms, noting that the $\bbQ_p$-vector spaces in \eqref{compn:lie-alg-to-cts}
    (resp., the $\bbF_p$-vector spaces in \eqref{compn:Zp-to-Fp}) naturally have an $F$-linear structure (resp., a $k_F$-linear structure).  Thus, we obtain
        \begin{eqnarray*}
        \dim_{k_F}\left(H^i(\fn|_{\bbF_p}\otimes_{\bbF_p}k_F,~L(\lambda))\right) & = & \dim_{k_F}\left(H^i(\fN|_{\bbZ_p}\otimes_{\bbZ_p} k_F,~L(\lambda))\right) \\
        & \stackrel{\eqref{FpQpLiealg}}{=} & \dim_{F}\left(H^i(\fN|_{\bbZ_p}\otimes_{\bbZ_p} F,~L_F(\lambda))\right) \\
         & \stackrel{\eqref{compn:lie-alg-to-cts}}{=}  & \dim_{F}\left(H^i_{\textnormal{cts}}(N_0,~L_F(\lambda))\right) \\
         & \leq & \dim_{k_F}\left(H^i(N_0,~L(\lambda))\right),
    \end{eqnarray*}
    where the last inequality follows from equations \eqref{compn:cts-to-Zp} and \eqref{compn:Zp-to-Fp}.  
\end{proof}

We may now prove our main result on the mod $p$ cohomology of $N_0$.

\begin{Th*}
    \label{groupcoh}
    Suppose Assumption \ref{assn1} holds.  Let $\lambda \in X^*(\underline{\bfT})_+$ be $p$-small, and fix a subset $J \subset \Delta$ with associated parabolic subgroup $\bfP = \bfM \bfN$.

	We have a $M_0$-equivariant convergent spectral sequence
        \begin{equation}
        \label{liealgss}
        E_1^{i,j} = \textnormal{gr}_i\left(H^{i + j}(\fn_\omega\otimes_{\bbF_p}k_F,~\gr(L(\lambda)))\right) \Longrightarrow H^{i + j}\left(N_0,~L(\lambda)\right),    
        \end{equation}
        which collapses on the first page.  (We renormalize the gradings on $\fn_\omega$ and $\gr(L(\lambda))$ by multiplying by $h'$, so that they are $\bbZ$-valued.  The grading on the Lie algebra cohomology then comes from the grading on the Chevalley--Eilenberg complex of $\fn_\omega$ and the grading on $\gr(L(\lambda))$.)

	Consequently, we obtain a $M_0$-equivariant isomorphism
        $$\bigoplus_{i \in \bbZ} \textnormal{gr}_i\left(H^n(N_0,~L(\lambda))\right) \cong \bigoplus_{\substack{w \in {}^JW \\ \ell(w) = n}} L_J(w\cdot \lambda),$$
        where the notation is as in Theorem \ref{kostant} and where the grading on the left-hand side comes from the spectral sequence \eqref{liealgss}.  

\end{Th*}

\begin{proof}
The existence of the spectral sequence follows from \cite[Thm. 5.5, Prop. 6.2]{sorensen:hochschild}, while its $M_0$-equivariance follows readily from the constructions in \cite[\S\S 2 -- 5]{sorensen:hochschild}.  We must verify that it collapses on the first page.  By definition of convergence of spectral sequences, we have 
        $$\dim_{k_F}\left(H^n(\fn_\omega \otimes_{\bbF_p}k_F, \gr(L(\lambda)))\right) \geq \dim_{k_F}\left(H^n(N_0,~L(\lambda))\right).$$  
        On the other hand, Lemmas \ref{lazard-vs-alg-lie} and \ref{lie-leq-grp} imply that 
        \begin{eqnarray*}
        \dim_{k_F}\left(H^n(\fn_\omega \otimes_{\bbF_p}k_F, \gr(L(\lambda)))\right) & = & \dim_{k_F}\left(H^n(\fn|_{\bbF_p}\otimes_{\bbF_p}k_F,~L(\lambda))\right) \\
        &  \leq & \dim_{k_F}(H^n(N_0,~L(\lambda))).
        \end{eqnarray*}
        Thus the dimensions are equal, which forces all the differentials on the $E_1$ page to vanish.

	The degeneration of the spectral sequence \eqref{liealgss} and Corollary \ref{kostant-cor} imply that $H^n(N_0,L(\lambda))$ has an $M_0$-stable filtration for which
       \begin{eqnarray*}
        \bigoplus_{i \in \bbZ}\textnormal{gr}_i\left(H^n(N_0,~L(\lambda))\right) & \cong &  \bigoplus_{i \in \bbZ}\textnormal{gr}_i\left(H^n(\fn_\omega\otimes_{\bbF_p}k_F,~\gr(L(\lambda)))\right) \\
        & \cong & H^n\left(\fn_\omega\otimes_{\bbF_p}k_F,~\gr(L(\lambda))\right)\\
        & \cong & \bigoplus_{\substack{w \in {}^JW \\ \ell(w) = n}}L_J(w\cdot \lambda)
        \end{eqnarray*}
        as $M_0$-representations, where the second equivalence comes from the fact that $H^n(\fn_\omega\otimes_{\bbF_p}k_F,\gr(L(\lambda)))$ is semisimple as an $M_0$-representation (recalling Lemma \ref{p-small-J} and Corollary \ref{algrepscor}\ref{algrepscor-b}).  
       
\end{proof}

\subsection{Splitting of the cohomology complex}\label{split:subsection}

In this subsection, we assume the coefficient field $k$ contains the residue field $k_F$ of $F$, and tacitly base-change all representations to $k$.  An application of the universal coefficient theorem shows that the results of the previous section hold with coefficients in $k$.  Our goal will be to examine the bounded complex $\nR H^0(N_0,L(\lambda)) \in \nD(M_0)$; in particular, we give sufficient conditions for this complex to split as a direct sum of its shifted cohomology objects.  

\begin{Pt*}
Fix a standard Levi subgroup $\bfM$ of $\bfG$ with corresponding subset $J \subset \Delta$, and let $\bfC_{\bfM}$ denote the connected center of $\bfM$.  Set $C_{M,0} := \bfC_{\bfM}(\cO_F)$.  We shall be interested in the action of $C_{M,0}$ on the cohomology spaces $H^n(N_0,L(\lambda))$.  To this end, we make the following assumption.  

\begin{Assn*}
\label{assn-centralchar}
If $v,w \in {}^JW$ satisfy $\ell(v) \neq \ell(w)$, then $C_{M,0}$ acts by distinct characters on $L_J(v\cdot \lambda)$ and $L_J(w\cdot \lambda)$.
\end{Assn*}

We remark that this assumption will not be satisfied in general.  (See Remark \ref{dropconds}.)

Below, we will give several sufficient conditions (in terms of $\bfM$ and $\lambda$) for Assumption \ref{assn-centralchar} to hold.  We first record the consequence relevant for our purposes.
\end{Pt*}

\begin{Lem*}
\label{coh-split}
Suppose Assumptions \ref{assn1} and \ref{assn-centralchar} hold.  Then we have 
$$\nR H^0(N_0,L(\lambda)) \cong \bigoplus_{n = 0}^{\dim(N_0)}H^n(N_0, L(\lambda))[-n]$$
in $\nD(M_0)$.  
\end{Lem*}

\begin{proof}
Assumption \ref{assn-centralchar} guarantees that $\Ext_{M_0}^i(L_J(v \cdot \lambda), L_J(w\cdot \lambda)) = 0$ for all $i \in \bbZ$, and all $v, w \in {}^JW$ satisfying $\ell(v) \neq \ell(w)$.  By d\'evissage, Theorem \ref{groupcoh} then implies 
$$\Ext_{M_0}^i\big(H^m(N_0, L(\lambda)),~ H^n(N_0, L(\lambda))\big) = 0$$
for all $i \in \bbZ$ and all $m \neq n$, and thus Corollary \ref{split-cor-a} gives the desired result.
\end{proof}

We now give some conditions for Assumption \ref{assn-centralchar} to hold.

\begin{Lem*}
\label{split:M=T}
Suppose that:
\begin{itemize}[$\diamond$]
\item Assumption \ref{assn1} holds;
\item the center of $\bfG$ is connected;
\item $\bfM = \bfT$ (so that ${}^JW = W$);
\item $\lambda \in X^*(\underline{\bfT})_+$ is $p$-small and for every $\alpha \in \Phi^+$, there exists $\varsigma_0:k_F \longrightarrow k_F$ such that $\langle \lambda + \rho, \alpha_{\varsigma_0}^\vee\rangle \neq p - 1$ (see Subsection \ref{N_0'-sect} for the definition of $\alpha_{\varsigma_0}$).
\end{itemize}
Then Assumption \ref{assn-centralchar} holds.  In particular, we have a splitting
$$\nR H^0(U_0,L(\lambda)) \cong \bigoplus_{n = 0}^{\dim(U_0)}H^n(U_0, L(\lambda))[-n]$$
in $\textnormal{D}(T_0)$.  Moreover, each $H^n(U_0,L(\lambda))$ splits as 
$$H^n(U_0,L(\lambda)) \cong \bigoplus_{\substack{w \in W \\ \ell(w) = n}}k(w\cdot \lambda)$$
in $\textnormal{D}(T_0)^\heartsuit$, and $\nR H^0(U_0,L(\lambda))$ is multiplicity-free.  
\end{Lem*}

\begin{proof}
We will prove the stronger assertion that if $v,w \in W$ satisfy $v \neq w$, then $T_0$ acts by distinct characters on $L_\emptyset(v\cdot \lambda) = k(v\cdot \lambda)$ and $L_{\emptyset}(w\cdot \lambda) = k(w\cdot \lambda)$.  This will imply the stronger claimed splitting statement.

In order to prove the claim, it suffices to show that the stabilizer in $W$ of the (finite-order) character $(\lambda + \rho)|_{T_0}$ is trivial.  According to \cite[Thm. 5.13]{delignelusztig}, this stabilizer is generated by reflections $s_{\alpha_\varsigma}$ ($\alpha \in \Phi^+, \varsigma:k_F \longrightarrow k_F$) such that ``$\alpha_{\varsigma}^\vee$ is orthogonal to the character $(\lambda + \rho)|_{T_0}$.''  This condition is described in \cite[\S 5.9]{delignelusztig}; in particular, this condition holds if and only if 
$$\prod_{\varsigma':k_F \rightarrow k_F} (\varsigma\circ\varsigma')\left(\zeta^{\langle\lambda + \rho, \alpha_{\varsigma'}^\vee\rangle}\right) = 1,$$
where $\zeta$ denotes a fixed generator of $k_F^\times$.  However, the fourth assumption implies that the left-hand side cannot be equal to 1.  Therefore, the stabilizer of $(\lambda + \rho)|_{T_0}$ is trivial.  
\end{proof}

\begin{Rem*}
The fourth assumption of the lemma above can be relaxed somewhat.  Namely, it suffices to check the condition $\langle \lambda + \rho, \alpha_{\varsigma_0}^\vee\rangle \neq p - 1$ only for those $\alpha$ such that $\alpha^\vee$ is either the highest or a second-highest coroot in the irreducible component of $\Phi^\vee$ to which it belongs.  To justify this claim, we show that if $\alpha^\vee$ is not the highest or a second-highest coroot, then $\langle \lambda + \rho, \alpha_\varsigma^\vee\rangle \neq p - 1$ for all $\varsigma : k_F \longrightarrow k_F$.  Suppose by contraposition that $\langle \lambda + \rho, \alpha_\varsigma^\vee\rangle = p - 1$ for some $\varsigma$, and let $\alpha^\vee_0$ denote the highest coroot of $\Phi^\vee_{\alpha^\vee}$, the irreducible component of $\Phi^\vee$ to which $\alpha^\vee$ belongs.  Write 
$$\alpha^\vee = \sum_{\beta^\vee \in \Phi^\vee_{\alpha^\vee} \cap \Delta^\vee}n_{\beta^\vee} \beta^\vee, \qquad \alpha^\vee_{0} = \sum_{\beta^\vee \in \Phi^\vee_{\alpha^\vee} \cap \Delta^\vee}m_{\beta^\vee} \beta^\vee,$$
where $n_{\beta^\vee}, m_{\beta^\vee} \in \bbZ_{\geq 0}$.  By \cite[Ch. VI, \S 1.8, Prop. 25(i)]{bourbaki:Lie4-6}, we have $m_{\beta^\vee} \geq n_{\beta^\vee}$ for all $\beta^\vee \in \Phi^\vee_{\alpha^\vee} \cap \Delta^\vee$.  Therefore, by dominance and $p$-smallness of $\lambda$, we have
\begin{eqnarray*}
\hgt(\alpha_{0}^\vee) - \hgt(\alpha^\vee) & = & \sum_{\beta^\vee \in \Phi^\vee_{\alpha^\vee}\cap \Delta^\vee} (m_{\beta^\vee} - n_{\beta^\vee}) \\
& \leq & \sum_{\beta^\vee \in \Phi^\vee_{\alpha^\vee}\cap \Delta^\vee} (m_{\beta^\vee} - n_{\beta^\vee})\langle \lambda + \rho, \beta_\varsigma^\vee\rangle \\
 &  = & \langle \lambda + \rho, \alpha_{0,\varsigma}^\vee - \alpha^\vee_\varsigma\rangle \\
 & \leq &  p - (p - 1) \\
 & = & 1.
\end{eqnarray*}
We conclude that $\hgt(\alpha^\vee) \geq \hgt(\alpha^\vee_0) - 1$.  
\end{Rem*}
 
 \begin{Rem*}
 \label{dropconds}
 If the connectedness assumption in Lemma \ref{split:M=T} is omitted, then Assumption \ref{assn-centralchar} may fail to hold.  For example, consider the case where $\bfG = \nS\nL_{2/\bbQ_p}$ with $p \geq 5$, $\lambda = \frac{p - 3}{2} \in \bbZ \cong X^*(\bfT)$, and $\bfM = \bfT$.  Then all but the second assumption of Lemma \ref{split:M=T} are satisfied.  We have $\bfC_{\bfM} = \bfT$, and the actions of $T_0 \cong \bbZ_p^\times$ on $H^0(U_0,L(\lambda)) = L_\emptyset(\lambda) = k(\lambda)$ and $H^1(U_0,L(\lambda)) = L_\emptyset(s\cdot \lambda) = k(s\cdot\lambda)$ agree (where $s$ denotes the nontrivial element of the Weyl group).  Thus, Assumption \ref{assn-centralchar} does not hold.

 Similarly, if the fourth assumption of the lemma is omitted, then Assumption \ref{assn-centralchar} may fail to hold.  In this case, take $\bfG = \nG\nL_{2/\bbQ_p}$ with $p \geq 5$, $\lambda = (p - 2,0) \in \bbZ^{\oplus 2} \cong X^*(\bfT)$, and $\bfM = \bfT$.  Then all but the fourth assumption of Lemma \ref{split:M=T} are satisfied.  We have $\bfC_{\bfM} = \bfT$, and the actions of $T_0 \cong (\bbZ_p^\times)^2$ on $H^0(U_0, L(\lambda)) = L_\emptyset(\lambda) = k(\lambda)$ and $H^1(U_0, L(\lambda)) = L_\emptyset(s\cdot \lambda) = k(s\cdot \lambda)$ agree.

 \end{Rem*}

 When $\lambda = 0$, we can drop the connectedness assumption in Lemma \ref{split:M=T}.
 
 \begin{Lem*}
 \label{split:M=T,lambda=0}
 Suppose that:
\begin{itemize}[$\diamond$]
\item Assumption \ref{assn1} holds;
\item $\bfM = \bfT$ (so that ${}^JW = W$);
\item $\lambda = 0$.
\end{itemize}
Then Assumption \ref{assn-centralchar} holds.  In particular, we have a splitting
$$\nR H^0(U_0,k) \cong \bigoplus_{n = 0}^{\dim(U_0)}H^n(U_0, k)[-n]$$
in $\textnormal{D}(T_0)$.  Moreover, each $H^n(U_0,k)$ splits as 
$$H^n(U_0,k) \cong \bigoplus_{\substack{w \in W \\ \ell(w) = n}}k(w\cdot 0)$$
in $\textnormal{D}(T_0)^\heartsuit$, and $\nR H^0(U_0,k)$ is multiplicity-free.  
 \end{Lem*}

 \begin{proof}
 We will prove that if $w \in W$ is nontrivial, then the (finite-order) character $(w\cdot 0)|_{T_0} = (w(\rho) - \rho)|_{T_0}$ is nontrivial. Suppose by contradiction that $(w\cdot 0)|_{T_0}$ is trivial, and let $\beta \in \Delta$ (without an underline).  Then, for each $x \in k_F^\times$, we have
 \begin{equation}
 \label{split:M=T,lambda=0,eqn1}
1 = (w\cdot 0)\left(\beta^\vee(x^{-1})\right) = \prod_{\varsigma:k_F \rightarrow k_F}\varsigma\left(x^{\langle-w(\rho) + \rho, \beta_\varsigma^\vee\rangle}\right).
\end{equation}
Next, note that $\langle-w(\rho) + \rho, \beta_\varsigma^\vee\rangle = \langle\rho, -w^{-1}(\beta_\varsigma^\vee) + \beta_\varsigma^\vee\rangle = \hgt(-w^{-1}(\beta_\varsigma^\vee)) + 1$.  Thus, we get
\begin{equation}
\label{split:M=T,lambda=0,bounds}
-(p - 1) < -h + 2 \leq \langle-w(\rho) + \rho, \beta_\varsigma^\vee\rangle \leq h < p - 1.
 \end{equation}
Combining \eqref{split:M=T,lambda=0,eqn1} and \eqref{split:M=T,lambda=0,bounds}, we deduce
$\langle-w(\rho) + \rho, \beta_\varsigma^\vee\rangle = \hgt(-w^{-1}(\beta_\varsigma^\vee)) + 1 =  0$ for all $\varsigma$.

Now choose $\beta\in \Delta$ for which there exists $\varsigma_0:k_F \longrightarrow k_F$ satisfying $w^{-1}(\beta_{\varsigma_0}^\vee) \in \underline{\Phi}^{-,\vee}$ (such a choice is possible since $w \neq 1$).  The previous paragraph then implies $\hgt(w^{-1}(\beta_{\varsigma_0}^\vee)) = 1$, and we therefore arrive at a contradiction.
\end{proof}

\begin{Rem*}\label{ramified}
    If the ramification condition of Assumption \ref{assn1} in Lemma \ref{split:M=T,lambda=0} is not satisfied, the representations $H^n(U_0, k)$ may fail to be semisimple and multiplicity-free.  For example, consider the group $\bfG = \nG\nL_{2/\bbQ_p(\sqrt{p})}$ over the ramified extension $F = \bbQ_p(\sqrt{p})$, with $\bfU$ the upper triangular unipotent matrices and $\bfT$ the diagonal matrices.  We have $U_0 \cong \cO_F \cong \bbZ_p^{\oplus 2}$, and one can check that, as a $T_0$-representation, the cohomology $H^1(U_0,\bbF_p)$ is a nonsplit extension of $\alpha^{-1}$ by $\alpha^{-1}$, where 
    $$\alpha\left(\begin{pmatrix}
        a & 0 \\ 0 & d
    \end{pmatrix}\right) = ad^{-1}~\textnormal{mod}~\sqrt{p}\cO_F$$
    for $a,d \in \cO_F^\times$.  
\end{Rem*}

 \begin{Pt*}
We now focus on general Levi subgroups $\bfM$ of $\bfG$ (associated to $J \subset \Delta$) and $\lambda = 0$. Let $\Phi = \Phi^{(1)} \sqcup \ldots \sqcup \Phi^{(r)}$ denote the decomposition of $\Phi$ into irreducible components, and for each $1 \leq j \leq r$, let $\alpha_0^{(j)}$ denote the highest root of $\Phi^{(j)}$ (relative to the basis $\Delta \cap \Phi^{(j)}$).  For $1 \leq j \leq r$, we define $\xi_j \in X_*(\bfT_{k_F})$ to be any fixed element satisfying
\begin{itemize}[$\diamond$]
\item $\langle \alpha, \xi_j \rangle = 0$ for all $\alpha \in \Phi^+ - \Phi^{(j), +}$;
\item $\langle \alpha, \xi_j \rangle = 0$ for all $\alpha \in \Phi^{(j),+} \cap \Phi^+_J$;
\item $\langle \alpha, \xi_j \rangle > 0$ for all $\alpha \in \Phi^{(j),+} - (\Phi^{(j),+} \cap \Phi^+_J)$;
\item $\langle \alpha_0^{(j)}, \xi_j \rangle$ is minimal.
\end{itemize}
In particular, since $\langle \alpha, \xi_j\rangle = 0$ for all $\alpha \in J$, we have $\xi_j \in X_*(\bfC_{\bfM, k_F})$ (\cite[Lem. 8.1.8(ii)]{springer}).  Finally, we define
$$h_\bfM := \max_{1 \leq j \leq r} \left\{\langle \alpha_0^{(j)}, \xi_j\rangle \right\}.$$

In the proof below, we shall use the following notation: given $\xi \in X_*(\bfT_{k_F})$ and $\varsigma: k_F \longrightarrow k_F$, we let 
$$\xi_\varsigma \in X_*(\underline{\bfT}) \cong \bigoplus_{\varsigma':k_F \rightarrow k_F} X_*(\bfT_{k_F} \times_{k_F,\varsigma'}k_F)$$ 
denote the element which is equal to $\xi$ in embedding $\varsigma$ and $0$ otherwise.  
\end{Pt*}

\begin{Lem*}
\label{split:ratl-to-alg}
Suppose $p > |\Phi^+ - \Phi^+_J|h_\bfM + 1$, and let $v,w \in {}^JW$.  If $C_{M,0}$ acts by the same character on $L_J(v\cdot 0)$ and $L_J(w\cdot 0)$, then 
$$\langle v\cdot 0, \xi_{j,\varsigma}\rangle = \langle w\cdot 0, \xi_{j,\varsigma}\rangle$$
for all $1 \leq j \leq r$ and all $\varsigma: k_F \longrightarrow k_F$.
 \end{Lem*}
 
 \begin{proof}
Fix $1 \leq j \leq r$.  For $x \in k_F^\times$, the action of $\xi_j(x^{-1}) \in C_{M,0}$ on $L_J(w\cdot 0)$ is given by
\begin{equation}
\label{Mcentralaction}
(w\cdot 0)\left(\xi_j(x^{-1})\right) = \prod_{\varsigma:k_F \rightarrow k_F}\varsigma\left(x^{\langle - w\cdot 0,\xi_{j,\varsigma}\rangle}\right).
\end{equation}
Next, note that 
$$-w\cdot 0 = - w(\rho) + \rho = \sum_{\substack{\alpha \in \underline{\Phi}^+ \\ w^{-1}(\alpha) \in \underline{\Phi}^-}}\alpha.$$
If $\alpha \in X^*(\underline{\bfT}) \cong \bigoplus_{\varsigma':k_F \rightarrow k_F} X^*(\bfT_{k_F} \times_{k_F,\varsigma'}k_F)$ does not lie in the $\varsigma$-component, then $\langle \alpha, \xi_{j,\varsigma} \rangle = 0$.  On the other hand, if $\alpha$ does lie in the $\varsigma$-component, then we obtain
$$0 \leq \langle \alpha, \xi_{j,\varsigma}\rangle \leq \langle \alpha_{0,\varsigma}^{(j)}, \xi_{j,\varsigma}\rangle \leq h_{\bfM}$$
(see \cite[Ch. VI, \S 1.8, Prop. 25(i)]{bourbaki:Lie4-6}).  Since the number of roots $\alpha \in \underline{\Phi}^+$ which satisfy $w^{-1}(\alpha) \in \underline{\Phi}^-$ and which lie in the $\varsigma$-component is bounded above by $|\Phi^+ - \Phi^+_J|$, we get
\begin{equation}
 \label{Mbound}
 0 \leq \langle -w\cdot 0 , \xi_{j,\varsigma}\rangle = \sum_{\substack{\alpha \in \underline{\Phi}^+ \\ w^{-1}(\alpha) \in \underline{\Phi}^-}} \langle \alpha, \xi_{j,\varsigma} \rangle \leq |\Phi^+ - \Phi^+_J|h_{\bfM} < p - 1.
 \end{equation}

Suppose now that $L_J(v\cdot 0)$ and $L_J(w\cdot 0)$ have the same $C_{M,0}$-action.  We get
$$\prod_{\varsigma:k_F \rightarrow k_F}\varsigma\left(x^{\langle - v\cdot 0,\xi_{j,\varsigma}\rangle}\right) = \prod_{\varsigma:k_F \rightarrow k_F}\varsigma\left(x^{\langle - w\cdot 0,\xi_{j,\varsigma}\rangle}\right)$$
for all $x\in k_F^\times$ by equation \eqref{Mcentralaction}, and equation \eqref{Mbound} implies that
$$0 \leq \langle -v\cdot 0, \xi_{j,\varsigma}\rangle < p - 1, \qquad 0 \leq \langle -w\cdot 0, \xi_{j,\varsigma}\rangle < p - 1$$
for all $\varsigma$.  Therefore, by writing the embeddings $\varsigma$ as powers of the arithmetic Frobenius, we see that the exponent of $x$ in \eqref{Mcentralaction} is an integer between $0$ and $p^{[k_F:\bbF_p]} - 2$.  Taking $x$ to be a generator of $k_F^\times$, we conclude that
$$\langle v\cdot 0, \xi_{j,\varsigma}\rangle = \langle w\cdot 0, \xi_{j,\varsigma}\rangle$$
for all $\varsigma$.  
\end{proof}

 We now apply the above discussion to Assumption \ref{assn-centralchar}.  Recall that ${}^JW(\bfG,\bfT)$ denotes the set of minimal length coset representatives for $W(\bfM,\bfT)$ in $W(\bfG,\bfT)$, and that ${}^JW$ decomposes as a product of copies of ${}^JW(\bfG,\bfT)$ indexed by field isomorphisms $\varsigma:k_F \longrightarrow k_F$.

 \begin{Lem*}
 \label{split:bruhat}
  Let $\bfM$ correspond to $J \subset \Delta$, and suppose that:
 \begin{itemize}[$\diamond$]
 \item Assumption \ref{assn1} holds;
\item $p > |\Phi^+ - \Phi^+_J| h_{\bfM} + 1$;
\item $\lambda = 0$;
\item any two elements of ${}^JW(\bfG, \bfT)$ of distinct length are comparable under the right weak Bruhat order of $W(\bfG,\bfT)$ (see the proof below for the definition of this partial order).
\end{itemize}
Then Assumption \ref{assn-centralchar} holds.
 \end{Lem*}
 
 \begin{Rem*}
As examples of sets ${}^JW(\bfG,\bfT)$ which satisfy the fourth assumption above, we may take: 
\begin{itemize}[$\diamond$]
\item any group $\bfG$ of type $\textnormal{A}_n$, and $J = \{\alpha_1, \ldots, \alpha_{n - 1}\}$ or $J = \{\alpha_2, \ldots, \alpha_n\}$; 
\item any group $\bfG$ of type $\textnormal{C}_n$, and $J =  \{\alpha_2, \ldots, \alpha_n\}$;
\item any group $\bfG$ of type $\textnormal{D}_n$, and $J =  \{\alpha_2, \ldots, \alpha_n\}$;
\item any group $\bfG$ of type $\textnormal{G}_2$, and $J = \{\alpha_1\}$ or $J = \{\alpha_2\}$;
\item etc.
\end{itemize}
(See \cite[Planches I -- IX]{bourbaki:Lie4-6} for the customary numbering of roots.)
\end{Rem*}

\begin{proof}[Proof of Lemma \ref{split:bruhat}]
Recall from \cite[Ch. 3]{bjornerbrenti} that the \textit{right weak order} $\preceq$ on the Coxeter group $W(\bfG,\bfT)$ is characterized by $y \preceq z$ if and only if $\ell(y) + \ell(y^{-1}z) = \ell(z)$.  By \cite[Prop. 3.1.3]{bjornerbrenti}, we have that 
\begin{equation}
\label{weak-right-Phi}
y\preceq z \quad \textnormal{if and only if}\quad \Phi_y \subset \Phi_z,
\end{equation}
where 
$$\Phi_y := \Phi^+ \cap y(\Phi^-).$$

We now apply these considerations to central characters.  Suppose that $v,w \in {}^JW$ have distinct lengths, and suppose by contradiction that $L_J(v\cdot 0)$ and $L_J(w\cdot 0)$ have the same action of $C_{M,0}$.  By Lemma \ref{split:ratl-to-alg}, we have 
\begin{equation}
\label{inn-prod-v-w-equal}
\langle v\cdot 0, \xi_{j,\varsigma}\rangle = \langle w\cdot 0, \xi_{j,\varsigma}\rangle
\end{equation}
for all $1\leq j \leq r$ and all $\varsigma:k_F \longrightarrow k_F$.  Since $v$ and $w$ have distinct lengths, there exists $\varsigma$ for which $v_\varsigma$ and $w_\varsigma$ have distinct lengths (as elements of ${}^JW(\bfG,\bfT)$).  Assume without loss of generality that $v_\varsigma \prec w_\varsigma$.  Using the characterization of right weak order in \eqref{weak-right-Phi}, we may write the character $v_\varsigma\cdot 0 - w_\varsigma\cdot 0$ as
$$v_\varsigma\cdot 0 - w_\varsigma\cdot 0 = -\sum_{\alpha \in \Phi_{v_\varsigma}}\alpha + \sum_{\alpha \in \Phi_{w_\varsigma}}\alpha = \sum_{\alpha \in \Phi_{w_\varsigma} - \Phi_{v_\varsigma}}\alpha.$$
Since $v_\varsigma\prec w_\varsigma$, the sum $\sum_{\alpha \in \Phi_{w_\varsigma} - \Phi_{v_\varsigma}}\alpha$ contains a summand in $\Phi^+ - \Phi^+_J$.  Such a summand pairs positively with $\xi_{j,\varsigma}$ (for an appropriately chosen $j$), from which we deduce $\langle v_\varsigma\cdot 0 - w_\varsigma\cdot 0, \xi_{j,\varsigma}\rangle\neq 0$.  However, this contradicts \eqref{inn-prod-v-w-equal}.
\end{proof}

 \begin{Lem*}
 \label{split:abelian}
  Let $\bfM$ correspond to $J \subset \Delta$, and suppose that:
 \begin{itemize}[$\diamond$]
\item Assumption \ref{assn1} holds;
\item $p > |\Phi^+ - \Phi^+_J| h_{\bfM} + 1$;
\item $\lambda = 0$;
\item $\bfN$ is abelian.
\end{itemize}
Then Assumption \ref{assn-centralchar} holds.
 \end{Lem*}

\begin{proof}
The given assumptions imply that $N_0$ is a (topologically) finitely generated, abelian, pro-$p$ group.  From this, we deduce that $N_0 \cong \bbZ_p^{\oplus \dim(\bfN)[F:\bbQ_p]}$ as topological groups, and therefore $N_0$ is uniform.  Hence, by \cite[Thm. 5.1.5]{symondsweigel}, we obtain an $M_0$-equivariant isomorphism
\begin{equation}
\label{abelian:wedge}
H^i(N_0,~k) \cong \sideset{}{_{k}^i}\bigwedge H^1(N_0,~ k).
\end{equation}
Moreover, by Theorem \ref{groupcoh}, the $M_0$-representation $H^1(N_0,k)$ admits a filtration whose graded pieces are 
$$L_J(-\beta_\varsigma), \qquad \textnormal{where}\qquad \beta \in \Delta - J,~ \varsigma:k_F \longrightarrow k_F.$$
In particular, the characters of $C_{M,0}$ appearing in $H^1(N_0,k)$ are the $(-\beta_\varsigma)|_{C_{M,0}}$ for $\beta$ and $\varsigma$ as above, with multiplicity $\dim_{k}(L_J(-\beta_\varsigma))$.

Now suppose $v,w\in {}^JW$ satisfy $\ell(v)\neq \ell(w)$, and that $C_{M,0}$ acts by the same character on $L_J(v\cdot 0)$ and $L_J(w\cdot 0)$.  Since the character $(v\cdot 0)|_{C_{M,0}}$ appears in $H^{\ell(v)}(N_0, k)$ (and analogously for $w$), the decomposition \eqref{abelian:wedge} implies
$$(-v\cdot 0)|_{C_{M,0}} = \sum_{\varsigma:k_F \rightarrow k_F}\sum_{\beta \in \Delta - J} m_{\beta, \varsigma}\beta_{\varsigma}|_{C_{M,0}}, \qquad (-w\cdot 0)|_{C_{M,0}} = \sum_{\varsigma:k_F \rightarrow k_F}\sum_{\beta \in \Delta - J} n_{\beta, \varsigma}\beta_{\varsigma}|_{C_{M,0}},$$
for some choice of coefficients $m_{\beta,\varsigma},~ n_{\beta,\varsigma} \in \bbZ_{\geq 0}$ satisfying $0 \leq m_{\beta,\varsigma},~ n_{\beta,\varsigma} \leq \dim_{k}(L_J(-\beta_\varsigma))$ and
$$\sum_{\varsigma:k_F \rightarrow k_F}\sum_{\beta \in \Delta - J} m_{\beta, \varsigma} = \ell(v),\qquad \sum_{\varsigma:k_F \rightarrow k_F}\sum_{\beta \in \Delta - J} n_{\beta, \varsigma} = \ell(w).$$
Since $v$ and $w$ were assumed to have distinct lengths, there exists $\varsigma$ for which $v_\varsigma$ and $w_\varsigma$ have distinct lengths.  From this, we deduce
$$\sum_{\beta \in \Delta - J} m_{\beta, \varsigma} = \ell(v_\varsigma) \neq \ell(w_\varsigma) = \sum_{\beta \in \Delta - J} n_{\beta, \varsigma}.$$
Consequently, there exists $\beta' \in \Delta - J$ for which we have $m_{\beta', \varsigma} \neq n_{\beta', \varsigma}$.

Now choose $1 \leq j \leq r$ such that $\beta' \in \Phi^{(j)}$.  By the assumption on central characters and Lemma \ref{split:ratl-to-alg}, we have
$$\sum_{\beta \in \Delta - J}m_{\beta,\varsigma}\langle \beta_{\varsigma}, \xi_{j,\varsigma}\rangle = \langle -v\cdot 0, \xi_{j,\varsigma}\rangle = \langle -w\cdot 0, \xi_{j,\varsigma}\rangle = \sum_{\beta \in \Delta - J}n_{\beta,\varsigma}\langle \beta_{\varsigma}, \xi_{j,\varsigma}\rangle.$$
By \cite[Lem. 2.2]{RRS}, we have $J \cap \Phi^{(j)} = (\Delta \cap \Phi^{(j)}) - \{\beta'\}$, so that by definition of $\xi_j$ the preceding equation reduces to 
$$m_{\beta',\varsigma}\langle \beta'_{\varsigma}, \xi_{j,\varsigma}\rangle = n_{\beta',\varsigma}\langle \beta'_{\varsigma}, \xi_{j,\varsigma}\rangle.$$
However, since $\langle \beta'_{\varsigma}, \xi_{j,\varsigma}\rangle > 0$ this contradicts $m_{\beta', \varsigma} \neq n_{\beta', \varsigma}$.  
\end{proof}

\subsection{Application to principal series}\label{psr:subsection}

We now give an application of the above results to the cohomology of principal series representations.  

Let $k$ be an arbitrary field of characteristic $p$, and let $\chi:T \longrightarrow k^\times$ denote a smooth character.  Such a character is trivial on the maximal pro-$p$ subgroup of $T$.  Thus, if we let $\chi_0$ denote the restriction $\chi|_{T_0}$, then $\chi_0$ factors through the subgroup $\bfT(k_F)$.  We use the notation $\chi_0$ to refer to either the character of $T_0$ or of $\bfT(k_F)$.

We inflate the character $\chi$ to a character of $B$ trivial on $U$, and consider the induced representation $\Ind_B^G(\chi)$.  We are interested in the cohomology spaces $H^i(G_0,\Ind_B^G(\chi))$.

\begin{Prop*}
	\label{prin-series-appn}
    Suppose Assumption \ref{assn1} holds.  Let $T_1$ denote the pro-$p$ Sylow subgroup of $T_0$, and let $\ft_1^* := \Hom_{\textnormal{cts}}(T_1,k)$, which is a $k$-vector space of dimension $[F:\bbQ_p]\textnormal{rk}(\bfG)$.
    \begin{enumerate}[(a)]
        \item\label{prin-series-appn:1} Suppose $k$ contains $k_F$, and assume $\chi_0 \neq -w\cdot 0$ for all $w \in W$.  Then 
        $$H^i\left(G_0,\Ind_B^G(\chi)\right) = 0  \qquad \textnormal{for all $i \geq 0$}.$$
        \item\label{prin-series-appn:2} Suppose $k$ contains $k_F$, and assume $\chi_0 = -w\cdot 0$ for some $w \in W$.  Then 
        $$H^i\left(G_0, \Ind_B^G(\chi)\right) \cong \sideset{}{^{i - \ell(w)}_k}{\bigwedge}\ft_1^*.$$
        In particular, $H^i(G_0, \Ind_B^G(\chi))\neq 0$ if and only if $\ell(w) \leq i \leq \ell(w) + [F:\bbQ_p]\textnormal{rk}(\bfG)$.  
        \item\label{prin-series-appn:3} Suppose $k$ is arbitrary, and assume that $\chi$ is unramified (that is, that $\chi_0$ is the trivial character).  Then 
        $$H^i\left(G_0,\Ind_B^G(\chi)\right) \cong \sideset{}{^{i}_k}{\bigwedge}\ft_1^*.$$
        In particular, $H^i(G_0, \Ind_B^G(\chi))\neq 0$ if and only if $0 \leq i \leq [F:\bbQ_p]\textnormal{rk}(\bfG)$.  
    \end{enumerate}
\end{Prop*}

\begin{proof}
    Combining the Iwasawa decomposition $G = BG_0$ and the Mackey decomposition gives
    $$\Ind_B^G(\chi)|_{G_0} \cong \Ind_{B_0}^{G_0}(\chi_0)$$
    (where we inflate $\chi_0$ to a character of $B_0$ trivial on $U_0$).  An application of Shapiro's lemma then gives
    $$H^i\left(G_0,\Ind_B^G(\chi)\right) \cong  H^i\left(G_0, \Ind_{B_0}^{G_0}(\chi_0)\right) \cong H^i(B_0,\chi_0).$$
    Using the decomposition $B_0 = T_0 \ltimes U_0$, we will calculate the latter cohomology using the Hochschild--Serre spectral sequence:
    \begin{equation}
        \label{hochschildserre}
        E_2^{i,j} = H^i\left(T_0,H^j(U_0,\chi_0)\right) \cong H^i\left(T_0,\chi_0 \otimes_k H^j(U_0,k)\right) \Longrightarrow H^{i + j}(B_0, \chi_0).  
    \end{equation}

    We first examine the $T_0$ cohomology above.  Suppose $V$ is a smooth representation of $T_0$ over $k$ on which $T_1$ acts trivially.  The Hochschild--Serre spectral sequence associated to the normal subgroup $T_1$ then gives 
    $$E_2^{i,j} = H^i\left(\bfT(k_F),H^j(T_1,V)\right) \cong H^i\left(\bfT(k_F),V \otimes_k H^j(T_1,k)\right) \Longrightarrow H^{i + j}(T_0,V).$$
    Since $\bfT(k_F)$ is finite of order coprime to $p$, the spectral sequence collapses to give
    $$\left(V \otimes_k H^j(T_1,k)\right)^{\bfT(k_F)} \cong H^j(T_0,V).$$
    Note furthermore that the action of $\bfT(k_F)$ on $H^j(T_1,k)$ is trivial, from which we get
    \begin{equation}
        \label{T0coh}
        V^{\bfT(k_F)}\otimes_k H^j(T_1,k) \cong H^j(T_0,V).
    \end{equation}
    Thus $H^j(T_0,V)\neq 0$ if and only if $V^{\bfT(k_F)}\neq 0$ and $H^j(T_1,k) \neq 0$.

    We now apply the previous paragraph to the spectral sequence \eqref{hochschildserre}:
    \begin{enumerate}[(a)]
        \item Suppose first that $k$ contains $k_F$, and that $\chi_0 \neq -w\cdot 0$ for all $w\in W$.  By Lemma \ref{split:M=T,lambda=0}, for all $j \geq 0$, the semisimple representation $\chi_0\otimes_k H^j(U_0, k)$ does not contain the trivial character of $\bfT(k_F)$.  Consequently, equation \eqref{T0coh} applied to $V = \chi_0\otimes_k H^j(U_0, k)$ implies $H^i(T_0,\chi_0 \otimes_k H^j(U_0,k)) = 0$ for all $i$ and $j$, which gives part (a) by invoking the spectral sequence \eqref{hochschildserre}.
        \item Suppose next that $k$ contains $k_F$, and that $\chi_0 = -w\cdot 0$ for some fixed $w\in W$.  By the multiplicity-freeness result in Lemma \ref{split:M=T,lambda=0}, we obtain that 
        $$\left(\chi_0\otimes_k H^j(U_0,k)\right)^{\bfT(k_F)} \neq 0 \qquad \textnormal{if and only if} \qquad j = \ell(w),$$ 
        in which case the dimension is 1.  Applying equation \eqref{T0coh} to $V = \chi_0\otimes_k H^j(U_0, k)$ shows that 
        $$H^i\left(T_0,\chi_0\otimes_k H^j(U_0,k)\right) \neq 0 \qquad \textnormal{implies}\qquad j = \ell(w).$$
        Therefore, the spectral sequence \eqref{hochschildserre} collapses to give
        $$H^i(B_0,\chi_0) \cong H^{i - \ell(w)}\left(T_0,\chi_0\otimes_kH^{\ell(w)}(U_0,k)\right) \cong H^{i - \ell(w)}(T_1,k).$$
        \item Finally, suppose that $k$ is arbitrary, and that $\chi_0$ is trivial.  A straightforward argument using the universal coefficient theorem, Lemma \ref{split:M=T,lambda=0}, and \cite[\S 12.1, Prop. 1]{bourbaki:algch8} shows that $\bigoplus_{n \geq 0}H^n(U_0, k)$ is multiplicity-free and semisimple.  Since the trivial $T_0$-character appears in $H^0(U_0,k)$, we deduce
        $$H^j(U_0,k)^{\bfT(k_F)} \neq 0 \qquad \textnormal{if and only if} \qquad j = 0,$$ 
        in which case the dimension is 1.  Applying equation \eqref{T0coh} to $V = H^j(U_0, k)$ shows that
        $$H^i\left(T_0,H^j(U_0,k)\right) \neq 0 \qquad \textnormal{implies}\qquad j = 0.$$
        Therefore, the spectral sequence \eqref{hochschildserre} collapses to give
        $$H^i(B_0,\chi_0) \cong H^{i}\left(T_0,H^{0}(U_0,k)\right) \cong H^{i}(T_1,k).$$
    \end{enumerate}

    To finish the proof, it suffices to calculate $H^i(T_1,k)$ for an arbitrary field $k$ of characteristic $p$.  The group $T_1$ is isomorphic to $(1 + p\cO_F)^{\textnormal{rk}(\bfG)}$; since $1 + p\cO_F$ is torsion-free (this follows from Assumption \ref{assn1}), we get that 
    $$T_1 \cong \bbZ_p^{\oplus [F:\bbQ_p]\textnormal{rk}(\bfG)}.$$
    In particular, $T_1$ is a uniform pro-$p$ group of dimension $[F:\bbQ_p]\textnormal{rk}(\bfG)$.  By a result of Lazard (see \cite[Thm. 5.1.5]{symondsweigel}) we have 
    $$H^i(T_1,k) \cong \sideset{}{^i_{k}}{\bigwedge}\ft_1^*.$$
    This finishes the proof.
\end{proof}

\section{The left adjoint of parabolic induction and orthogonal decompositions} \label{LN_0:section}

In this section, we use the results of the previous section to describe the left adjoint of parabolic induction on certain compactly induced representations.

We keep the assumptions of the previous section.  In particular, Assumption \ref{assn1} remains in effect, and we let $J \subset \Delta$ denote a subset with corresponding parabolic subgroup $\bfP = \bfM\bfN$.  We fix a $p$-small character $\lambda \in X^*(\underline{\bfT})_+$ throughout.

\subsection{The left adjoint at compact level}

\bigskip

In \cite{H22}, Heyer constructs and studies the left adjoint $L(N,-)$ of the parabolic induction functor $\Ind_{P}^{G}:\nD(M)\lra \nD(G)$, for an arbitrary coefficient field $k$ of characteristic $p$. In particular, at compact level, the following holds:

\begin{Th*} \emph{\textbf{(Heyer, \cite{H22})}} 
\label{Heyer}
\begin{enumerate}[(a)]
\item\label{Heyer-a} Let $\cK\subset G$ be a compact and torsion-free (in particular, pro-$p$) subgroup, and assume that $P \cap \cK = (M \cap \cK)(N \cap \cK)$. Then the parabolic induction functor $\Ind_{P\cap \cK}^{\cK}:\nD(M\cap \cK)\lra \nD(\cK)$ admits a left adjoint $L(N\cap \cK,-)$; moreover
$$
L(N\cap \cK,-) = \textnormal{R}H^0\left(N\cap \cK,-\right)[\dim(N\cap \cK)].
$$
\item\label{Heyer-b} The parabolic induction functor $\Ind_{P_0}^{G_0}:\nD(M_0)\lra \nD(G_0)$ admits a left adjoint $L(N_0,-)$.
\end{enumerate}
\end{Th*}

\begin{proof}
Point \ref{Heyer-a} is \cite[Cor. 3.1.8, Prop. 3.1.10]{H22}, while point \ref{Heyer-b} follows from \cite[Thm. 3.2.3]{H22}.
\end{proof}

In point \ref{Heyer-b}, the subgroup $G_0 \subset G$ is compact but not necessarily torsion-free.  However, for this subgroup, we will adapt the proof of point \ref{Heyer-a} and use Theorem \ref{groupcoh} to prove the following:

\begin{Prop*} \label{formula-spherical}
Assume Assumption \ref{assn1} holds. Then
$$
L(N_0,-) = \textnormal{R}H^0\left(N_0,-\right)[\dim(N_0)] \otimes_k (N_{k_F/\bbF_p}(\overline{2\rho}\cdot \overline{2\rho_M}^{-1})).
$$
\end{Prop*}

In order to prove Proposition \ref{formula-spherical}, we will need the following lemma.

\begin{Lem*}\label{rig-comp}
Let $\cK$ be a compact $p$-torsion-free $p$-adic Lie group. Then $\nD(\cK)$ is rigidly-compactly generated (in the sense of \cite[Def. 2.7]{BDS}).
\end{Lem*}

\begin{proof}
If $\cK$ is a pro-$p$-group, then $\nD(\cK)$ is rigidly-compactly generated by \cite[Prop. 2.3.19]{H22}, in which case the full subcategory of compact objects is the strictly full saturated triangulated subcategory $\lan 1_{\cK}\lran$ generated by the trivial representation. 

If $\cK$ is not necessarily pro-$p$, one can consider a pro-$p$-Sylow subgroup $\cP$ (which is an \emph{open} subgroup of $\cK$). Then $\Ind_{\cP}^{\cK}(1_{\cP})$ is a compact generator of $\nD(\cK)$ by \cite[Lem. 4, Prop. 6]{S15}. Moreover, we claim that it is a rigid object, i.e., that the canonical map
$$
\uRHom_{\cK}(\Ind_{\cP}^{\cK}(1_{\cP}),1_{\cK})\otimes_k Z\lra \uRHom_{\cK}(\Ind_{\cP}^{\cK}(1_{\cP}),Z)
$$
is an isomorphism for all $Z\in \nD(\cK)$ (recall that we use $\uRHom_{\cK}$ to denote the internal Hom in $\nD(\cK)$, cf. \cite[\S 3]{SS22}) . Since the canonical map 
$$
\Ind_{\cP}^{\cK}\left(\uRHom_{\cP}(X,Y|_{\cP})\right) \lra \uRHom_{\cK}(\Ind_{\cP}^{\cK}(X),Y)
$$
is an isomorphism for all $X \in \nD(\cP),Y\in \nD(\cK)$ by \cite[Lem. 2.2.3, \S 2.2.1]{heyer:geometrical}, the claim holds if and only if the canonical (projection)
map  
$$
\Ind_{\cP}^{\cK}(1_{\cP})\otimes_k Z \lra \Ind_{\cP}^{\cK}(1_{\cP} \otimes_k Z|_{\cP})
$$
is an isomorphism. But this isomorphism is readily checked if $Z\in\nD(\cK)^{\heartsuit}$, and hence follows for any $Z$ since all the differentials involved are trivial (see again \cite[Lem. 2.2.3]{heyer:geometrical}).

Finally, by the general arguments from the proof of \cite[Prop. 2.3.19]{H22}, the category $\nD(\cK)$ is rigidly-compactly generated, with full subcategory of compact objects equal to $\lan \Ind_{\cP}^{\cK}(1_{\cP})\lran$.
\end{proof}

We may now prove the above proposition.

\begin{proof}[Proof of Proposition \ref{formula-spherical}.]
First, we claim that the functor  
	$$\textnormal{R}H^0(N_0,-): \nD(P_0) \longrightarrow \nD(M_0)$$
	preserves compact objects.  
Let $\textnormal{pr}:\bfG(\cO_F) \longtwoheadrightarrow \bfG(k_F)$ denote the projection map, and define the pro-$p$-Iwahori subgroup by $I_1 := \textnormal{pr}^{-1}\left(\bfU(k_F)\right)$.  Thus $I_1$ is a pro-$p$-Sylow subgroup of $G_0$; by the Assumption \ref{assn1}, it is torsion-free, i.e.,  $G_0$ is $p$-torsion free.  Further, set 
	$I_{P,1} :=  I_1 \cap P_0$; by the Iwahori decomposition relative to $P$, the subgroup $I_{P,1}$ is a pro-$p$-Sylow of $P_0$.
	Then by the proof of Lemma \ref{rig-comp}, the full subcategory of compact objects of $\nD(P_0)$ is $\langle \Ind_{I_{P,1}}^{P_0}(1_{I_{P,1}})\rangle$.  Hence, to prove the claim, it suffices to show that $\textnormal{R}H^0(N_0,\Ind_{I_{P,1}}^{P_0}(1_{I_{P,1}}))$ is a compact object of $\nD(M_0)$.  This can be checked at pro-$p$-level as follows. The factorization $P_0=M_0N_0$ induces the factorization $I_{P,1}=I_{M,1}N_0$ with $I_{M,1} := I_1 \cap M_0$. Hence we have the following diagram of functors between abelian categories 
	\begin{center}
	\begin{tikzcd}[column sep=8ex]
	\textnormal{Rep}(I_{P,1}) \ar[r, "\Ind_{I_{P,1}}^{P_0}"] \ar[d, "H^0(N_0{,}-)"'] & \textnormal{Rep}(P_0) \ar[d, "H^0(N_0{,}-)"] \\
	\textnormal{Rep}(I_{M,1})  \ar[r, "\Ind_{I_{M,1}}^{M_0}"] & \textnormal{Rep}(M_0)\ ;
	\end{tikzcd}
	\end{center}
	it is commutative (up to natural isomorphism), as can be checked using the factorizations above and the related Mackey decompositions (\cite[Thm. 1.1]{yamamoto}).  Then, since the two induction functors are exact and since the restriction $\textnormal{Rep}(P_0) \lra \textnormal{Rep}(I_{P,1})$ is exact, the diagram of functors 
	\begin{center}
	\begin{tikzcd}[column sep = 8ex]
    \nD(I_{P,1}) \arrow[r, "\mathrm{Ind}_{I_{P,1}}^{P_0}"] \arrow[d, "\mathrm{R}H^0(N_0{,}-)"']
    & \nD(P_0) \arrow[d, "\mathrm{R}H^0(N_0{,}-)"] \\
    \nD(I_{M,1}) \arrow[r, "\mathrm{Ind}_{I_{M,1}}^{M_0}"]
    & \nD(M_0)
\end{tikzcd}
	\end{center}
	is commutative (up to a natural isomorphism). Now, the functor $\Ind_{I_{M,1}}^{M_0}:\nD(I_{M,1})  \lra \nD(M_0)$ preserves compact objects because it is left adjoint to the restriction functor which commutes with direct sums. Thus it suffices to check that $\textnormal{R}H^0(N_0, 1_{I_{P,1}}) \in \nD(I_{M,1})$ is a compact object, which is the content of \cite[Prop. 3.1.6]{H22}.
	
	Next, we claim that there exists an object $\omega_{P_0/M_0}\in\nD(P_0)$ such that the functor 
	$$\textnormal{R}H^0(N_0,\omega_{P_0/M_0} \otimes_k -): \nD(P_0) \longrightarrow \nD(M_0)$$
	is left adjoint to the inflation functor\footnote{More precisely, the functor $\textnormal{R}H^0(N_0,-):\nD(P_0) \lra \nD(M_0)$ admits a right adjoint and $\omega_{P_0/M_0}$ is the value of this right adjoint on $1_{M_0}$.}. Indeed, since any closed subgroup of $G_0$ is $p$-torsion-free, Lemma \ref{rig-comp} allows us to use the results of \cite{BDS} as in \cite[\S 3.1]{H22}, with the above fact that $\textnormal{R}H^0(N_0,-)$ preserves compact objects as an input.  One can then compose this left adjoint with the restriction functor
	$$\Res^{G_0}_{P_0}:\nD(G_0) \longrightarrow \nD(P_0)$$
	to obtain a left adjoint to $\Ind_{P_0}^{G_0}:\nD(M_0) \lra \nD(G_0)$.
	
	Finally, it remains to show that
	$$\omega_{P_0/M_0} = N_{k_F/\bbF_p}(\overline{2\rho}\cdot \overline{2\rho_M}^{-1})[\dim(N_0)]$$
	(with coefficients of the character extended to $k$).  Since we deal with groups which are not pro-$p$, for this last point we need the notion of Poincaré group in the sense of \cite[Def. 3.7.1]{NSW}; then any closed subgroup $\cK$ of $G_0$ is a Poincar\'e group at $p$ of dimension $\dim(\cK)$ (we defer the justification of this claim until after the present proof).  In particular, 
$$
\varinjlim_{\substack{\cK'\vartriangleleft \cK\\ \textnormal{open}}} H^i(\cK',~ k)^\vee 
= 
\left\{ \begin{array}{ll}
0 & \textrm{if $0\leq i <\dim(\cK)$,}\\
k & \textrm{if $i=\dim(\cK)$,}
\end{array} \right.
$$
	where the maps in the direct limit are the $k$-linear duals of the corestriction maps;  the first equality comes from the definition, while the second one comes from \cite[Cor. 3.4.7]{NSW}. With these inputs, the proof of \cite[Prop. 3.1.10]{H22} applies to show that 
	$\omega_{P_0/M_0} \cong \delta[\dim(N_0)]$ for some smooth character $\delta: P_0 \lra k^\times$.  In particular $\delta$ is trivial on the pro-$p$ group $N_0$. To compute its value on $M_0$, note that the left adjoint property of $\textnormal{R}H^0(N_0,\omega_{P_0/M_0}\otimes-)$, and the fact that it is concentrated in degrees $\leq 0$, imply that 
	$$H^{\dim(N_0)}(N_0,-)\otimes_k \delta:\textnormal{Rep}(P_0)\lra\textnormal{Rep}(M_0)$$ is left adjoint to inflation on the heart abelian categories. Hence this left adjoint has to coincide with the functor of $N_0$-coinvariants. In particular
	$H^{\dim(N_0)}(N_0,k)\otimes_k\delta$ is the trivial representation of $M_0$. But we have computed $H^{\dim(N_0)}(N_0,k)$ in Theorem \ref{groupcoh} (taking $\lambda:=0$).  Namely, assuming that $k$ contains $k_F$, this cohomology space is given by
	$$\bigoplus_{i \in \bbZ}\textnormal{gr}_i(H^{\dim(N_0)}(N_0,~ k)) \cong \bigoplus_{\substack{w \in {}^JW\\ \ell(w) = \dim(N_0)}}L_J(w\cdot 0).$$  
	By \cite[\S 1.10, Prop.]{humphreys-book}, there is a unique element of ${}^JW$ of length $\dim(N_0)$, namely $w_{\bfM, \circ}w_\circ$, where $w_\circ$ (resp., $w_{\bfM,\circ}$) denotes the longest element of  $W$ (resp., $W(\underline{\bfM},\underline{\bfT})$).  Thus, we get $H^{\dim(N_0)}(N_0,k) \cong L_J(w_{\bfM,\circ}w_{\circ}\cdot 0)$, and a calculation shows that the character $w_{\bfM,\circ}w_{\circ}\cdot 0$ is equal to $N_{k_F/\bbF_p}(\overline{2\rho}^{-1}\cdot \overline{2\rho_M})$. In particular, this character has a model over any coefficient field $k$ of characteristic $p$.  This gives the result.
\end{proof}

\begin{Rem*}
We can also deduce Proposition \ref{formula-spherical} from results of \cite{H22} and \cite{heyer:geometrical}.  Indeed, \cite[Cor. 3.4.20]{H22} identifies the left adjoint $L(N_0,-)$ as $\nR H^0(N_0,-)$ up to a twist (see also Remark 3.4.3 of \textit{op. cit.}).  One can then explicitly identify this character using \cite[Prop. 2.3.4, Lem. 2.3.5, Prop. 2.3.9, Lem. 4.1.6]{heyer:geometrical}.  We would like to thank Heyer for pointing this out.
\end{Rem*}

The following result was used in the proof of Proposition \ref{formula-spherical}.

\begin{Lem*}
Suppose $\cK$ is a compact $p$-torsion-free $p$-adic Lie group.  Then $\cK$ is a Poincar\'e group at $p$ of dimension $\dim(\cK)$, in the sense of \cite[Def. 3.7.1]{NSW}.
\end{Lem*}

\begin{proof}
If $\cP$ denotes a pro-$p$-Sylow subgroup of $\cK$, then by \cite[Cor. (1)]{serre:cohdim} and \cite[Thm. V.2.5.8]{lazard}, $\cP$ is a Poincar\'e group of dimension $\dim(\cP) = \dim(\cK)$, in the sense of \cite[\S V.2.5.7]{lazard} or \cite[\S I.4.5]{serre:galoiscoh}.  By \cite[Prop. 3.7.6 and Def. 3.7.1]{NSW}, we see that $\cP$ is a Poincar\'e group at $p$ of dimension $\dim(\cK)$, in the sense of \cite[Ch. III, \S 7]{NSW}.  In particular, the dualizing module of $\cP$ is isomorphic (as an abelian group) to $\bbQ_p/\bbZ_p$.  By \cite[Ch. III, \S 7, Exer. 1]{NSW}, we obtain that $\cK$ is a duality group at $p$ of dimension $\dim(\cK)$ (this uses the fact that $\cd_p(\cK) = \dim(\cK) < \infty$, by \cite[Cor. (1)]{serre:cohdim}).  Since $\cP$ is open in $\cK$, the set of open normal subgroups of $\cP$ is cofinal in the set of open normal subgroups of $\cK$, and we see that the restriction to $\cP$ of the dualizing module of $\cK$ is isomorphic to the dualizing module of $\cP$ (compare \cite[\S I.3.5, Prop. 18]{serre:galoiscoh}).  Hence, the dualizing module of $\cK$ is isomorphic to $\bbQ_p/\bbZ_p$, and the claim follows.
\end{proof}

\subsection{Orthogonal decomposition for small weights}

We assume from this point onwards that the coefficient field $k$ contains $k_F$.  

\begin{Pt*}\label{comput-N_0-coh}
Let $\lambda\in X^*(\underline{\bfT})_+$ be a $p$-small character of $\bfT$ with respect to $\underline{\Phi}^+$, and let $L(\lambda)$ be the irreducible algebraic representation of $\underline{\bfG}$ of highest weight $\lambda$. Then, by Theorem \ref{groupcoh}, the object 
$$
\textnormal{R}H^0(N_0,~L(\lambda))\quad\in \ \nD(M_0)
$$
is concentrated in degrees $[0,\dim(N_0)]$, and its $n^{\textnormal{th}}$ cohomology $H^n(N_0,L(\lambda))\in \nD(M_0)^{\heartsuit}$ is finite-dimensional, with Jordan--H\"older constituents given by the highest weight representations
$$
L_J(w\cdot \lambda),\quad w\in {}^{J}W,\quad \ell(w)=n.
$$
\end{Pt*}

\begin{Th*}\label{comput-L(N_0,L(lambda))}
Suppose Assumptions \ref{assn1} and \ref{assn-centralchar} hold. Let 
$$
L(N_0,-):\nD(G_0) \lra \nD(M_0)
$$
be the left adjoint of parabolic induction at spherical level. Set $L^{ n}(N_0,-):=h^nL(N_0,-)$ and $L^{ \leq n}(N_0,-):=\tau^{\leq n}L(N_0,-)$ for $n\in\bbZ$. 

Then $L^n(N_0,L(\lambda)) = 0$ for $n\notin [-\dim(N_0),0]$.  Moreover, the $M_0$-representation $L^n(N_0,L(\lambda))$ is finite-dimensional with Jordan--H\"older constituents 
$$
L_J(w\cdot \lambda)\otimes_k N_{k_F/\bbF_p}(\overline{2\rho}\cdot \overline{2\rho_M}^{-1}),\quad w\in {}^{J}W,\quad\ell(w)=n+\dim(N_0).
$$
In particular:
\begin{enumerate}[(a)]
\item For $n = -\dim(N_0)$, we have $L^{\leq -\dim(N_0)}(N_0,L(\lambda)) \cong L^{-\dim(N_0)}(N_0,L(\lambda))[\dim(N_0)]$ and
\begin{eqnarray*}
L^{-\dim(N_0)}(N_0,~ L(\lambda)) & \cong & L(\lambda)^{N_0}\otimes_k N_{k_F/\bbF_p}(\overline{2\rho}\cdot \overline{2\rho_M}^{-1})\\
 & \cong & H^0(N_0,~L(\lambda))\otimes_k N_{k_F/\bbF_p}(\overline{2\rho}\cdot \overline{2\rho_M}^{-1})\\
 & \cong & L_J(\lambda)\otimes_k N_{k_F/\bbF_p}(\overline{2\rho}\cdot \overline{2\rho_M}^{-1}).
\end{eqnarray*}
\item For $n = 0$, we have $L^{\leq 0}(N_0,L(\lambda)) \cong L(N_0,L(\lambda))$ and
\begin{eqnarray*}
L^0(N_0,~L(\lambda)) & \cong & L(\lambda)_{N_0}\\
 & \cong & H^{\dim(N_0)}(N_0,~L(\lambda))\otimes_k N_{k_F/\bbF_p}(\overline{2\rho}\cdot \overline{2\rho_M}^{-1})\\
 & \cong & L_J(w_{\bfM,\circ}w_{\circ}\cdot \lambda)\otimes_k N_{k_F/\bbF_p}(\overline{2\rho}\cdot \overline{2\rho_M}^{-1}).
\end{eqnarray*}
\end{enumerate}
\end{Th*}

\begin{proof}
This follows from combining Proposition \ref{formula-spherical} with the discussion of $\textnormal{R}H^0\left(N_0,-\right)$ in Section \ref{comput-N_0-coh}.
\end{proof}

\begin{Cor*}\label{decomp-L(N_0)}
Suppose Assumptions \ref{assn1} and \ref{assn-centralchar} hold. If $m,n \in \bbZ$ are unequal, then the following orthogonality relations hold: 
$$
\textnormal{RHom}_{M_0}\left(L^m(N_0,L(\lambda)),L^n(N_0,L(\lambda))\right)=0.
$$
In particular $L(N_0,L(\lambda))\in \nD(M_0)$ decomposes canonically as
$$
L(N_0,~L(\lambda))\cong\bigoplus_{n=-\dim(N_0)}^0L^n(N_0,L(\lambda))[-n].
$$
\end{Cor*}

The proof of the above corollary will require an extra ingredient.  Let us first note the following lemma. Recall that $\nD(M_0)$ is a closed symmetric monoidal category; in particular it is endowed with the contravariant \emph{smooth dual} endofunctor $\cS := \uRHom_{M_0}(-,k)$, cf. \cite[\S 3]{SS22}.

\begin{Lem*}\label{smooth-dual}
Let $\nD^{\rm b}_{\fin}(M_0)$ be the full subcategory of the bounded derived category whose objects have finite dimensional cohomologies.
\begin{enumerate}[(a)]
\item\label{smooth-dual-a} The contravariant endofunctor $\cS$ of $\nD(M_0)$ restricts to an endofunctor of $\nD^{\rm b}_{\fin}(M_0)$, which then is $t$-exact.
\item\label{smooth-dual-b} The resulting contravariant endofunctor of $\nD^{\rm b}_{\fin}(M_0)^{\heartsuit}$ sends a finite dimensional smooth representation $V$ of $M_0$ to its contragredient $V^*$.
\item\label{smooth-dual-c} For all $X\in \nD^{\rm b}_{\fin}(M_0)$ and $Y\in\Rep(M_0)$, the canonical map
$$
\cS(X)\otimes_k Y=\uRHom_{M_0}(X,k)\otimes_k Y\lra \uRHom_{M_0}(X,Y)
$$
is an isomorphism.
\end{enumerate}
\end{Lem*}

\begin{proof}
If $V\in \nD(M_0)$ is concentrated in degree $0$ and finite-dimensional, it follows directly from the definition of $\cS$ that $\cS(V)$ is the contragredient $V^*$ placed in degree $0$. Then the lemma follows by dévissage.
\end{proof}

We may now prove the above corollary.

\begin{proof}[Proof of Corollary \ref{decomp-L(N_0)}]
Consider $L^m(N_0,L(\lambda))$ as an object of $\nD^{\rm b}_{\fin}(M_0)$ concentrated in degree $0$. Then, by \cite[Prop. 3.1]{SS22} and Lemma \ref{smooth-dual}, we have
\begin{eqnarray*}
\textnormal{RHom}_{M_0}\big(L^m(N_0,L(\lambda)),~L^n(N_0,L(\lambda))\big) & \cong & \RHom_{M_0}\big(k,~ \uRHom_{M_0}(L^m(N_0,L(\lambda)),~L^n(N_0,L(\lambda)))\big) \\
 & \cong & \RHom_{M_0}\big(k,~ \cS(L^m(N_0,L(\lambda)))\otimes_k L^n(N_0,L(\lambda))\big) \\
 & \cong & \RHom_{M_0}\big(k,~ L^m(N_0,L(\lambda))^*\otimes_k L^n(N_0,L(\lambda))\big) \\
& \cong &  \textnormal{R}H^0\big(M_0,~L^m(N_0,L(\lambda))^*\otimes L^n(N_0,L(\lambda))\big) 
\end{eqnarray*}
as objects of $\nD(k)$.  By Theorem \ref{comput-L(N_0,L(lambda))}, the $M_0$-representation $L^m(N_0,L(\lambda))^*\otimes L^n(N_0,L(\lambda))$ admits a filtration whose graded pieces are
$$
L_J(v\cdot \lambda)^*\otimes L_J(w\cdot \lambda),
$$
where 
$$(v,w)\in ({}^{J}W)^2,\quad\ell(v)=m+\dim(N_0),\quad\ell(w)=n+\dim(N_0).$$
Now $C_{M,0}$ acts on $L_J(v\cdot \lambda)^*\otimes L_J(w\cdot \lambda)$ by the character $(w\cdot \lambda-v\cdot\lambda)|_{C_{M,0}}$, which is non-trivial by Assumption \ref{assn-centralchar} (since $m\neq n$). Since $\textnormal{R}H^0(C_{M,0},\chi)=0$ for any non-trivial smooth character $\chi:C_{M,0} \lra k^{\times}$, we get
$$
\textnormal{R}H^0\left(C_{M,0},~L^m(N_0,L(\lambda))^*\otimes L^n(N_0,L(\lambda))|_{C_{M,0}}\right) = 0
$$
by dévissage. Hence
$$
\textnormal{R}H^0\left(M_0,~L^m(N_0,L(\lambda))^*\otimes L^n(N_0,L(\lambda))\right)=0
$$
since $\textnormal{R}H^0(M_0,-)=\textnormal{R}H^0(M_0/C_{M,0},-)\circ \textnormal{R}H^0(C_{M,0},(-)|_{C_{M,0}})$.

Finally, the canonical decomposition of $L(N_0,L(\lambda))\in \nD(M_0)$ follows from the orthogonality relations, cf. \ref{decomp-can}.
\end{proof}

\begin{Rem*} \label{comult-compute}
In the situation of Corollary \ref{decomp-L(N_0)}, let $\mu\in X^*(\underline{\bfT})$ be a character of $\underline{\bfT}$ which is dominant with respect to $\underline{\Phi}_J^+$, and let $L_J(\mu)$ be the irreducible algebraic representation of $\underline{\bfM}$ of highest weight $\mu$. 
\begin{enumerate}[(a)]
\item If $ \mu|_{C_{M,0}}\notin\{ (w\cdot \lambda) |_{C_{M,0}}:\ w\in {}^{J}W\}$, then 
$$
\textnormal{RHom}_{M_0}\left(L(N_0,L(\lambda)),~L_J(\mu)\otimes_k N_{k_F/\bbF_p} (\overline{2\rho}\cdot \overline{2\rho_M}^{-1})\right)=0.
$$
\item If $ \mu|_{C_{M,0}} = (w\cdot \lambda) |_{C_{M,0}}$ for some $w\in {}^{J}W$, then 
\begin{flushleft}
$\displaystyle{\textnormal{RHom}_{M_0}\left(L(N_0,L(\lambda)),~L_J(\mu)\otimes_k N_{k_F/\bbF_p} (\overline{2\rho}\cdot \overline{2\rho_M}^{-1})\right)}$
\end{flushleft}
\begin{flushright}
$ \cong \textnormal{RHom}_{M_0}\left(L^{\ell(w)-\dim(N_0)}(N_0,L(\lambda)),~L_J(\mu)\otimes_k N_{k_F/\bbF_p} (\overline{2\rho}\cdot \overline{2\rho_M}^{-1})\right).$
\end{flushright}
Moreover, if $w$ is the only element in ${}^JW$ satisfying $\mu|_{C_{M,0}} = (w\cdot \lambda)|_{C_{M,0}}$, then we obtain
\begin{flushleft}
$ \textnormal{RHom}_{M_0}\left(L^{\ell(w)-\dim(N_0)}(N_0,L(\lambda)),~L_J(\mu)\otimes_k N_{k_F/\bbF_p} (\overline{2\rho}\cdot \overline{2\rho_M}^{-1})\right)$
\end{flushleft}
\begin{flushright}
$ \cong   \textnormal{RHom}_{M_0}\left(L_J(w\cdot\lambda),~L_J(\mu)\right).$
\end{flushright}
\end{enumerate}
The last isomorphism is canonical only if the $M_0$-representation $H^{\ell(w)}(N_0,L(\lambda))$ is irreducible, for example this is always the case for $w\in\{1,w_{\bfM,\circ}w_{\circ}\}$; in general, the isomorphism depends on the choice of a realization of $L_J(w\cdot\lambda)$ as a subquotient of the $M_0$-representation $H^{\ell(w)}(N_0,L(\lambda))$. The proof is similar to the one of Corollary \ref{decomp-L(N_0)}.
\end{Rem*}

\section{The Satake morphism} \label{Satake:section}

We now use the previous results on orthogonal decompositions to define a derived version of the Satake morphism.  We continue to assume that $k$ contains $k_F$.

\subsection{The construction}

\begin{Pt*}

Following Heyer \cite[\S 4.3]{H22}, we use the left adjoint 
$$
L(N,-):\nD(G)\lra  \nD(M)
$$ 
of the parabolic induction functor $\Ind_P^G:\nD(M) \lra \nD(G)$ to define a derived Satake morphism with coefficients in $k$.  Recall from \cite[Thm. 4.1.1]{H22} that the adjunction $L(N,-) \dashv \Ind_P^G$ holds in the form of a natural isomorphism
$$
\iota_{X,Y}:\RHom_M(L(N,X),Y) \stackrel{\sim}{\lra} \RHom_G(X,\Ind_P^G(Y))
$$
in $\nD(k)$, for all $X\in \nD(G)$ and $Y\in \nD(M)$. Denoting by $\eta_X:X\lra \Ind_P^G(L(N,X))$ the unit, the map $\iota_{X,Y}$ is defined as the composition
\begin{center}
\begin{tikzcd}
\RHom_M(L(N,X),Y) \ar[rr, "\iota_{X,Y}", "\sim"'] \ar[dr, "\Ind_P^G"'] & & \RHom_G(X,\Ind_P^G(Y)) \\
&\RHom_G(\Ind_P^G(L(N,X)),\Ind_P^G(Y)) \ar[ur, "\RHom_G(\eta_X{,}\Ind_P^G(Y))"'] & 
\end{tikzcd}
\end{center}
Here the map $\Ind_P^G$ is well-defined since $\Ind_P^G:\Rep(M) \lra \Rep(G)$ is exact.
\end{Pt*}

\begin{Const*}
\label{pre-Sat}
Given $X\in \nD(G)$, we denote by
$$
L(N,-)_X:\RHom_G(X,X) \lra \RHom_M(L(N,X),L(N,X))\quad
$$
the composition
\begin{center}
\begin{tikzcd}
\RHom_G(X,X) \ar[rr, "L(N{,}-)_X"] \ar[dr, "\RHom_G(X{,}\eta_X)"'] & &  \RHom_M(L(N,X),L(N,X))  \\
&\RHom_G\left(X,\Ind_P^G(L(N,X))\right) \ar[ur, "\iota_{X,L(N,X)}^{-1}"', "\mathclap{\sim}" sloped] & 
\end{tikzcd}
\end{center}
Moreover, we denote by 
$$
h^{\bullet}L(N,-)_X:\Ext^{\bullet}_G(X,X) \lra \Ext^{\bullet}_M(L(N,X),L(N,X))
$$
the morphism of graded $k$-vector spaces induced by $L(N,-)_X$ on cohomologies (that is, we write $\Ext^\bullet_G(X,X)$ for $\bigoplus_{i \in \bbZ}\Ext^i_G(X,X)$, etc.).
\end{Const*}

\begin{Lem*}
The map $h^{\bullet}L(N,-)_X$ is a morphism of graded $k$-algebras for the Yoneda product.
\end{Lem*}

\begin{proof}
By definition, we have the following commutative diagram of graded $k$-vector spaces:
\begin{center}
\begin{tikzcd}[column sep=3em, row sep=3em]
    E^{\bullet}:=\Hom_{\nD(G)}(X,X[\bullet]) \arrow[d] \arrow[r, "h^{\bullet}L(N{,}-)_X"] \arrow[d, "e^{\bullet}"']
    & F^{\bullet}:=\Hom_{\nD(M)}(L(N,X),L(N,X)[\bullet]) \arrow[dl, "h^{\bullet}\iota_{X,L(N,X)}"', "\sim"' sloped] \arrow[d, "\Ind_P^G"] \\
    M^{\bullet}:=\Hom_{\nD(G)}\left(X,\Ind_P^G (L(N,X))[\bullet]\right)
    & G^{\bullet}:=\Hom_{\nD(G)}\left(\Ind_P^G (L(N,X)),\Ind_P^G (L(N,X))[\bullet]\right) \arrow[l, "g^{\bullet}"']
\end{tikzcd}
\end{center}
where
$$
e^{\bullet}:=\Hom_{\nD(G)}(X,\eta_X[\bullet])\quad\textrm{and}\quad g^{\bullet}:=\Hom_{\nD(G)}(\eta_X,\Ind_P^G(L(N,X))[\bullet]).
$$
Thus, we have that:
\begin{itemize}[$\diamond$]
\item the graded $k$-vector space $M^{\bullet}$ is a graded bimodule over $(F^{\bullet},E^{\bullet})$ (where the $F^\bullet$-action is given by first applying the functor $\Ind_P^G$);
\item the map $e^{\bullet}$ is a morphism of graded right $E^{\bullet}$-modules, which sends the unit of $E^{\bullet}$ to $\eta_X$; 
\item the map $h^{\bullet}\iota_{X,L(N,X)}$ is an isomorphism of graded left  $F^{\bullet}$-modules, which sends the unit of $F^{\bullet}$ to $\eta_X$.
\end{itemize}
The lemma now follows from a diagram chase.
\end{proof}

\begin{Pt*}\label{com-cpct}
Next, by \cite[\S 4.3]{H22} there is a natural isomorphism of functors from $\nD(G_0)$ to $\nD(M)$
\begin{equation*}
L(N,-) \circ \cind_{G_0}^G \stackrel{\sim}{\Longrightarrow} \cind_{M_0}^M\circ L(N_0,-),
\end{equation*}
because $G_0\subset G$ is an open compact subgroup satisfying the Iwasawa decomposition $G=PG_0$ and $P_0=N_0M_0$.  Using this isomorphism, we make the following definition.
\end{Pt*}

\begin{Def*}\label{Sat}
Given $X_0\in \nD(G_0)$, we let
$$\sS_{X_0}:\RHom_G\left(\cind_{G_0}^G(X_0),~\cind_{G_0}^G(X_0)\right) \lra \RHom_M\left(\cind_{M_0}^M(L(N_0,X_0)),~\cind_{M_0}^M(L(N_0,X_0))\right)$$
be the morphism $L(N,-)_X$ of Section \ref{pre-Sat} associated to $X:=\cind_{G_0}^G(X_0)$, composed with the isomorphism of Section \ref{com-cpct} evaluated on $X_0$; it is a morphism in $\nD(k)$.

Furthermore, we let
$$h^{\bullet}\sS_{X_0}:\Ext^{\bullet}_G\left(\cind_{G_0}^G(X_0),~\cind_{G_0}^G(X_0)\right) \lra \Ext^{\bullet}_M\left(\cind_{M_0}^M(L(N_0,X_0)),~\cind_{M_0}^M(L(N_0,X_0))\right) $$
be the morphism of graded $k$-vector spaces induced by $\sS_{X_0}$ on cohomologies; it is a morphism of graded $k$-algebras.
\end{Def*}

\subsection{Satake morphisms for $p$-small weights}

We now show how the Satake map $\sS_{X_0}$ decomposes when we take $X_0 = L(\lambda)$.

\begin{Th*}\label{decomp-L(N)}
Suppose Assumptions \ref{assn1} and \ref{assn-centralchar} hold. Let 
$$
L(N,-):\nD(G) \lra \nD(M)
$$
be the left adjoint of parabolic induction, and set $L^{ n}(N,-):=h^nL(N,-)$ for $n\in\bbZ$.  If $m,n \in \bbZ$ are unequal, then the following orthogonality relations hold: 
$$
\textnormal{RHom}_{M}\left(L^m\big(N,\cind_{G_0}^G(L(\lambda))\big),~L^n\big(N,\cind_{G_0}^G(L(\lambda))\big)\right) = 0.
$$
In particular $L(N,\cind_{G_0}^G(L(\lambda)))\in \nD(M)$ decomposes as
$$
L\big(N,\cind_{G_0}^G(L(\lambda))\big) \cong \bigoplus_{n=-\dim(N)}^0L^n\big(N,\cind_{G_0}^G(L(\lambda))\big)[-n],
$$
and $\RHom_{M}(L(N,\cind_{G_0}^G(L(\lambda))),~L(N,\cind_{G_0}^G(L(\lambda))))\in\nD(k)$ decomposes as
\begin{flushleft}
$\displaystyle{\RHom_{M}\left(L(N,\cind_{G_0}^G(L(\lambda))),~L(N,\cind_{G_0}^G(L(\lambda)))\right)}$
\end{flushleft} 
\begin{flushright}
$\displaystyle{\cong \bigoplus_{n=-\dim(N)}^0\RHom_{M}\left(L^n(N,\cind_{G_0}^G(L(\lambda))),~L^n(N,\cind_{G_0}^GL(\lambda)))\right).}$
\end{flushright}
\end{Th*}

\begin{proof}
Since $\cind_{M_0}^M:\Rep(M_0) \lra \Rep(M)$ is exact, we have $L^m(N,\cind_{G_0}^G(L(\lambda))) \cong \cind_{M_0}^M(L^m(N_0,L(\lambda)))$ by the discussion in Section \ref{com-cpct}.  Hence, by Frobenius reciprocity, we obtain
\begin{flushleft}
$\displaystyle{\textnormal{RHom}_{M}\left(L^m\big(N,\cind_{G_0}^G(L(\lambda))\big),~L^n\big(N,\cind_{G_0}^G(L(\lambda))\big)\right) }$
\end{flushleft}
\begin{flushright}
$\displaystyle{\cong\textnormal{RHom}_{M_0}\left(L^m(N_0,L(\lambda)),~L^n\big(N,\cind_{G_0}^G(L(\lambda))\big)|_{M_0}\right)}$
\end{flushright}
(recall that $(-)|_{M_0}: \Rep(M) \lra \Rep(M_0)$ is exact). By \cite[Prop. 3.1]{SS22} and Lemma \ref{smooth-dual}, this gives
\begin{flushleft}
$\displaystyle{\textnormal{RHom}_{M}\left(L^m\big(N,\cind_{G_0}^G(L(\lambda))\big),~L^n\big(N,\cind_{G_0}^G(L(\lambda))\big)\right)}$ 
\end{flushleft}
\begin{flushright}
$\displaystyle{\cong \textnormal{R}H^0\left(M_0,~L^m(N_0,L(\lambda))^*\otimes L^n\big(N,\cind_{G_0}^G(L(\lambda))\big)|_{M_0}\right)
}$
\end{flushright}
(see the proof of Corollary \ref{decomp-L(N_0)}).  On the other hand, by Theorem \ref{comput-L(N_0,L(lambda))} the $M_0$-representation $L^n(N,\cind_{G_0}^G(L(\lambda)))|_{M_0} \cong \cind_{M_0}^M(L^n(N_0,L(\lambda)))|_{M_0}$ admits a filtration whose graded pieces are
$$
\left(\cind_{M_0}^M (L_J(w\cdot \lambda))\otimes_k N_{k_F/\bbF_p} (\overline{2\rho}\cdot \overline{2\rho_M}^{-1})\right)  |_{M_0},
$$
where $w\in {}^{J}W$ satsifies $\ell(w)=n+\dim(N_0)$.  Recall that $C_{M,0}$ is central in $M$.  It follows from the Mackey formula that $C_{M,0}$ acts on the $w^{\textnormal{th}}$ graded piece by the character $(w\cdot \lambda)|_{C_{M,0}}\otimes_k N_{k_F/\bbF_p} (\overline{2\rho}\cdot \overline{2\rho_M}^{-1})|_{C_{M,0}}$. 

From this point one concludes exactly as in the proof of Corollary \ref{decomp-L(N_0)}, further noting that $\textnormal{R}H^0(C_{M,0},\bigoplus_I\chi) \cong \bigoplus_I\textnormal{R}H^0(C_{M,0},\chi)$ for a non-necessarily finite set $I$, since $C_{M,0}$ is compact.
\end{proof}

\begin{Cor*}
Suppose Assumptions \ref{assn1} and \ref{assn-centralchar} hold. Then the Satake morphism in $\nD(k)$
\begin{flushleft}
$\displaystyle{ \sS_{L(\lambda)}:\RHom_G\left(\cind_{G_0}^G(L(\lambda)),~\cind_{G_0}^G(L(\lambda))\right) }$
\end{flushleft}
\begin{flushright}
$\displaystyle{  \lra\RHom_M\left(\cind_{M_0}^M(L(N_0,L(\lambda))),~\cind_{M_0}^M(L(N_0,L(\lambda)))\right) }$
\end{flushright}
decomposes canonically as a family 
\begin{flushleft}
$\displaystyle{ \sS_{L(\lambda),n}:\RHom_G\left(\cind_{G_0}^G(L(\lambda)),~\cind_{G_0}^G(L(\lambda))\right) }$
\end{flushleft}
\begin{flushright}
$\displaystyle{  \lra\RHom_M\left(\cind_{M_0}^M(L^n(N_0,L(\lambda))),~\cind_{M_0}^M(L^n(N_0,L(\lambda)))\right) }$
\end{flushright}
indexed by $n\in [-\dim(N),0]$. Accordingly, the morphism of graded $k$-algebras
\begin{flushleft}
$\displaystyle{ h^{\bullet}\sS_{L(\lambda)}:\Ext_G^{\bullet}\left(\cind_{G_0}^G(L(\lambda)),~\cind_{G_0}^G(L(\lambda))\right) }$
\end{flushleft}
\begin{flushright}
$\displaystyle{ \lra\Ext_M^{\bullet}\left(\cind_{M_0}^M(L(N_0,L(\lambda))),~\cind_{M_0}^M(L(N_0,L(\lambda)))\right) }$
\end{flushright}
decomposes canonically as a family of graded $k$-algebras
\begin{flushleft}
$\displaystyle{ h^{\bullet}\sS_{L(\lambda),n}:\Ext_G^{\bullet}\left(\cind_{G_0}^G(L(\lambda)),~\cind_{G_0}^G(L(\lambda))\right) }$
\end{flushleft}
\begin{flushright}
$\displaystyle{ \lra\Ext_M^{\bullet}\left(\cind_{M_0}^M(L^n(N_0,L(\lambda))),~\cind_{M_0}^M(L^n(N_0,L(\lambda)))\right) }$
\end{flushright}
indexed by $n\in [-\dim(N),0]$.
\end{Cor*}

\begin{Rem*}
\label{link-Heyer}
The degree $0$ part $\{h^0\sS_{L(\lambda),n}\}_{n\in  [-\dim(N),0]}$ of the family $\{h^{\bullet}\sS_{L(\lambda),n}\}_{n\in  [-\dim(N),0]}$ is the family of morphisms of $k$-algebras considered by Heyer in \cite[\S 4.3, Def.]{H22} (for the $G_0$-representation $L(\lambda)$). In particular, the morphism of $k$-algebras 
$$
h^0\sS_{L(\lambda),0}:\End_G\left(\cind_{G_0}^G(L(\lambda))\right)\lra \End_M\left(\cind_{M_0}^M(L(\lambda)_{N_0})\right)
$$ 
is the Satake morphism originally defined by Herzig in \cite{herzig:satake},\cite{herzig:inventiones}, and generalized by Henniart-Vignéras in \cite{HV12},\cite{HV15}; see \cite[Thm. 4.3.2]{H22}.
\end{Rem*}

\begin{Rem*}
\label{link-Ronchetti}
Suppose that Assumptions \ref{assn1} and \ref{assn-centralchar} hold.  In the discussion below, we take $\lambda = 0$, so that when $\bfM = \bfT$, Assumption \ref{assn-centralchar} is automatically satisfied (by Lemma \ref{split:M=T,lambda=0}).  

We examine the Satake morphisms introduced above.  The representation $L(\lambda)$ is equal to the trivial $G_0$-representation $1_{G_0}$, and we will show that the $n=0$ component $h^{\bullet}\sS_{1_{G_0},0}$ of the family $\{h^{\bullet}\sS_{1_{G_0},n}\}_{n\in  [-\dim(N),0]}$,  is the morphism of graded $k$-algebras constructed by Ronchetti in \cite[\S 6 Def. 13]{R19b}\footnote{More precisely, the morphism in \cite[\S 6 Def. 13]{R19b} is only defined in $\Ext$-degrees $\bullet \leq 1$, and the morphism $h^{\leq 1}\sS_{1_{G_0},0}$ is equal to it.}. Indeed, by the above construction we have the commutative diagram:

\begin{center}
\adjustbox{scale=.68,center}{
\begin{tikzcd}[column sep=3em, row sep=3em]
    & \RHom_M\left(\cind_{M_0}^M(1_{M_0}),~\cind_{M_0}^M(1_{M_0})\right) \arrow[dr, "\mathclap{\sim}" sloped] & \\
 \RHom_G\left(\cind_{G_0}^G(1_{G_0}),~\cind_{G_0}^G(1_{G_0})\right)  \ar[dr] \ar[r, "\sS_{1_{G_0}}"] \ar[ur, "\sS_{1_{G_0},0}"]  & \RHom_M\left(L(N,\cind_{G_0}^G(1_{G_0})),~L(N,\cind_{G_0}^G(1_{G_0}))\right) \ar[r] \ar[d, "\rotatebox{90}{$\sim$}"', "\iota_{\cind_{G_0}^G(1_{G_0}), L(N,\cind_{G_0}^G(1_{G_0}))}"] & \RHom_M\left(L(N,\cind_{G_0}^G(1_{G_0})),~\cind_{M_0}^M(1_{M_0})\right) \ar[d, "\rotatebox{90}{$\sim$}"', "\iota_{\cind_{G_0}^G(1_{G_0}), \cind_{M_0}^M(1_{M_0}) }"]\\
    & \RHom_G\left(\cind_{G_0}^G(1_{G_0}),~\Ind_P^G(L(N,\cind_{G_0}^G(1_{G_0})))\right) \ar[r] & \RHom_G\left(\cind_{G_0}^G(1_{G_0}),~\Ind_P^G(\cind_{M_0}^M(1_{M_0}))\right)
\end{tikzcd}
}
\end{center}
To justify the commutativity, note that we have a canonical augmentation
\begin{equation}
\label{augmentation}
L(N,\cind_{G_0}^G(1_{G_0})) \lra \tau^{\geq 0}L(N,\cind_{G_0}^G(1_{G_0})) \cong  L^0(N,\cind_{G_0}^G(1_{G_0})) \cong \cind_{M_0}^M(1_{M_0}).
\end{equation}
This map induces the commutativity of the lower-right square, and gives the diagonal isomorphism (by invoking Theorem \ref{decomp-L(N)}).  The lower-left triangle commutes by definition of $\sS_{1_{G_0}}$, while the upper triangle commutes by Theorem \ref{decomp-L(N)}.

Hence $\sS_{1_{G_0},0}$ fits into the commutative diagram  
\begin{center}
\begin{tikzcd}[column sep=3em, row sep=3em]
    \RHom_G\left(\cind_{G_0}^G(1_{G_0}),~\cind_{G_0}^G(1_{G_0})\right) \ar[dr] \ar[r, "\sS_{1_{G_0},0}"]    & \RHom_M\left(\cind_{M_0}^M(1_{M_0}),~\cind_{M_0}^M(1_{M_0})\right) \arrow[d, "\rotatebox{90}{$\sim$}"] \\

    & \RHom_G\left(\cind_{G_0}^G(1_{G_0}),~\Ind_P^G(\cind_{M_0}^M(1_{M_0}))\right)
\end{tikzcd}
\end{center}
Here the isomorphism comes from the two outer isomorphisms of the previous diagram, while the diagonal map is induced from the composition 
$$\cind_{G_0}^G(1_{G_0}) \longrightarrow \Ind_P^G(L(N,\cind_{G_0}^G(1_{G_0}))) \longrightarrow \Ind_P^G(\cind_{M_0}^M(1_{M_0})),$$
where the first map is the unit $\eta_{\cind_{G_0}^G(1_{G_0})}$, and the second is induced by the augmentation map \eqref{augmentation}.  Thus, we see that
$$
h^{\bullet}\sS_{1_{G_0},0}:\Ext_G^{\bullet}\left(\cind_{G_0}^G(1_{G_0}),~\cind_{G_0}^G(1_{G_0})\right )\lra \Ext_M^{\bullet}\left(\cind_{M_0}^M(1_{M_0}),~\cind_{M_0}^M(1_{M_0})\right)
$$ 
is obtained by evaluating the natural right action of $\Ext_G^{\bullet}(\cind_{G_0}^G(1_{G_0}),\cind_{G_0}^G(1_{G_0}))$ on the canonical intertwiner $\cind_{G_0}^G(1_{G_0})\lra\Ind_P^G(\cind_{M_0}^M(1_{M_0}))$ constructed above, which is a degree $0$ element of the graded bimodule $\Ext_G^{\bullet}(\cind_{G_0}^G(1_{G_0}),\Ind_P^G(\cind_{M_0}^M(1_{M_0})))$ of universal unramified parabolic induction.  This is precisely the construction from \cite[\S 2]{HV12}.
\end{Rem*}

\begin{Rem*}
\label{unr-p-s}
The vertical isomorphism from the previous diagram
$$
\RHom_M\left(\cind_{M_0}^M(1_{M_0}),~\cind_{M_0}^M(1_{M_0})\right) \stackrel{\sim}{\lra} \RHom_G\left(\cind_{G_0}^G(1_{G_0}),~ \Ind_P^G(\cind_{M_0}^M(1_{M_0}))\right)
$$
appearing in Remark \ref{link-Ronchetti} is interesting in its own right, since the right-hand side computes the $G_0$-cohomology of the \emph{universal} unramified parabolic induction $\Ind_P^G(\cind_{M_0}^M(1_{M_0}))$. It ultimately relies on the $N_0$-cohomology computation of Section \ref{comput-N_0-coh}. 
\end{Rem*}

\appendix

\section{Orthogonality and canonical splitting}

In this appendix we recall some facts about splitting of objects in derived categories.  The proofs are standard, but we include them for the sake of completeness.

Let $\cC$ be an abelian category having enough injectives. Denote by $\nD^{\rm b}(\cC)$ its bounded derived category. 

\begin{Pt*}
Recall the truncation functors $\tau^{\geq n}:\nD^{\rm b}(\cC)\lra\nD^{\rm b}(\cC)$ for $n\in\bbZ$.  For each $n\in\bbZ$ and $X\in \nD^{\rm b}(\cC)$, we have the distinguished triangle
\begin{equation}\label{trunc-triangle-a}
(h^nX)[-n] \lra \tau^{\geq n}X \lra \tau^{\geq n+1}X \lra ((h^nX)[-n] )[1].
\end{equation}
\end{Pt*}

\begin{Lem*}\label{split-lemma-a}
Let $X\in \nD^{\rm b}(\cC)$.  Assume $\Ext_{\cC}^{i-n}(h^iX,h^nX)=\Ext_{\cC}^{i-n+1}(h^iX,h^nX)=0$ for some $n\in\bbZ$ and all $i>n$. Then the morphism 
$$
\tau^{\geq n}X \lra \tau^{\geq n+1}X
$$ 
admits a section, which is unique. Together with $(h^nX)[-n] \lra \tau^{\geq n}X$, it defines an isomorphism
$$
\alpha_{\tau^{\geq n}X}: (h^nX)[-n]\oplus\tau^{\geq n+1}X \stackrel{\sim}{\lra} \tau^{\geq n}X.
$$
Also, the morphism
$$
(h^nX)[-n] \lra \tau^{\geq n}X
$$
admits a retraction, which is unique. Together with $\tau^{\geq n}X\lra \tau^{\geq n+1}X$, it defines an isomorphism
$$
\beta_{\tau^{\geq n}X}:  \tau^{\geq n}X \stackrel{\sim}{\lra} (h^nX)[-n]\oplus\tau^{\geq n+1}X.
$$
The composition
\begin{center}
\begin{tikzcd}[column sep = 10ex]
    (h^nX)[-n]\oplus\tau^{\geq n+1}X \arrow[r, "\alpha_{\tau^{\geq n}X}", "\sim"'] 
    & \tau^{\geq n}X \arrow[r, "\beta_{\tau^{\geq n}X}", "\sim"']
    & (h^nX)[-n]\oplus\tau^{\geq n+1}X
\end{tikzcd}
\end{center}
is the identity, i.e. $\alpha_{\tau^{\geq n}X}$ and $\beta_{\tau^{\geq n}X}$ are inverse one to the other.
\end{Lem*}

\begin{proof}
Applying $\Hom_{\nD^{\rm b}(\cC)}(\tau^{\geq n+1}X,-)$ to the triangle (\ref{trunc-triangle-a}) we get the exact sequence of abelian groups
\begin{flushleft}
$\displaystyle{\Hom_{\nD^{\rm b}(\cC)}(\tau^{\geq n+1}X,(h^nX)[-n])\lra \Hom_{\nD^{\rm b}(\cC)}(\tau^{\geq n+1}X,\tau^{\geq n}X) \lra \Hom_{\nD^{\rm b}(\cC)}(\tau^{\geq n+1}X,\tau^{\geq n+1}X) }$
\end{flushleft}
\begin{flushright}
$ \displaystyle{\lra \Hom_{\nD^{\rm b}(\cC)}(\tau^{\geq n+1}X,((h^nX)[-n] )[1]).}$
\end{flushright}
The first term vanishes because 
$$
\Hom_{\nD^{\rm b}(\cC)}((h^iX)[-i],(h^nX)[-n])=\Ext_{\cC}^{i-n}(h^iX,h^nX)=0
$$
for all $i\geq n+1$ by assumption. The fourth term vanishes because 
$$
\Hom_{\nD^{\rm b}(\cC)}((h^iX)[-i],((h^nX)[-n] )[1])=\Ext_{\cC}^{i-n+1}(h^iX,h^nX)=0
$$
for all $i\geq n+1$ by assumption. Hence the identity of $\tau^{\geq n+1}X$ lifts uniquely to a morphism $\tau^{\geq n+1}X\lra\tau^{\geq n}X$, as desired.

Similarly, applying $\Hom_{\nD^{\rm b}(\cC)}(-,(h^nX)[-n])$ to the triangle (\ref{trunc-triangle-a}) we get the exact sequence of abelian groups
\begin{flushleft}
$\displaystyle{\Hom_{\nD^{\rm b}(\cC)}(\tau^{\geq n+1}X,(h^nX)[-n])\lra \Hom_{\nD^{\rm b}(\cC)}(\tau^{\geq n}X,(h^nX)[-n]) \lra \Hom_{\nD^{\rm b}(\cC)}((h^nX)[-n],(h^nX)[-n]) }$
\end{flushleft}
\begin{flushright}
$\displaystyle{\hfill \lra \Hom_{\nD^{\rm b}(\cC)}(\tau^{\geq n+1}X[-1],(h^nX)[-n]).}$
\end{flushright}
The first term vanishes because 
$$
\Hom_{\nD^{\rm b}(\cC)}((h^iX)[-i],(h^nX)[-n])=\Ext_{\cC}^{i-n}(h^iX,h^nX)=0
$$
for all $i\geq n+1$ by assumption. The fourth term vanishes because 
$$
\Hom_{\nD^{\rm b}(\cC)}((h^iX)[-i][-1],(h^nX)[-n])=\Ext_{\cC}^{i-n+1}(h^iX,h^nX)=0
$$
for all $i\geq n+1$ by assumption. Hence the identity of $(h^nX)[-n]$ lifts uniquely to a morphism $\tau^{\geq n}X\lra(h^nX)[-n]$, as desired.

Finally, we have
$$
(\beta_{\tau^{\geq n}X}\circ \alpha_{\tau^{\geq n}X})|_{(h^nX)[-n]}=(\Id_{(h^nX)[-n]},0)
$$
by construction, and 
$$
(\beta_{\tau^{\geq n}X}\circ \alpha_{\tau^{\geq n}X})|_{\tau^{\geq n+1}X}=(0,\Id_{\tau^{\geq n+1}X})
$$
by construction and because $\Hom_{\nD^{\rm b}(\cC)}(\tau^{\geq n+1}X,(h^nX)[-n])=0$.
\end{proof}

\begin{Cor*}
\label{split-cor-a}
Let $X\in\nD^{\rm b}(\cC)$ such that $\Ext_{\cC}^{i-n}(h^iX,h^nX)=\Ext_{\cC}^{i-n+1}(h^iX,h^nX)=0$ for all $n,i\in\bbZ$ with $i>n$. Then the family $\{\alpha_{\tau^{\geq n}X}\}_{n\in\bbZ}$ assemble to an isomorphism
$$
\alpha_{\geq,X}:\bigoplus_n (h^nX)[-n] \stackrel{\sim}{\lra} X,
$$
the family $\{\beta_{\tau^{\geq n}X}\}_{n\in\bbZ}$ assemble to an isomorphism
$$
\beta_{\geq,X}: X \stackrel{\sim}{\lra} \bigoplus_n (h^nX)[-n],
$$
and $\alpha_{\geq,X}$ and $\beta_{\geq,X}$ are inverse one to the other.
\end{Cor*}

\begin{Pt*}
Recall the truncation functors $\tau^{\leq n}:\nD^{\rm b}(\cC)\lra\nD^{\rm b}(\cC)$ for $n\in\bbZ$. For each $n\in\bbZ$ and $X\in \nD^{\rm b}(\cC)$, we have the distinguished triangle
\begin{equation}\label{trunc-triangle-b}
\tau^{\leq n-1}X \lra \tau^{\leq n}X \lra (h^nX)[-n] \lra (\tau^{\leq n-1}X)[1].
\end{equation}
\end{Pt*}

The following results follow in a completely analogous manner to Lemma \ref{split-lemma-a} and Corollary \ref{split-cor-a}.

\begin{Lem*}\label{split-lemma-b}
Let $X\in \nD^{\rm b}(\cC)$.  Assume $\Ext_{\cC}^{n-i}(h^nX,h^iX)=\Ext_{\cC}^{n-i+1}(h^nX,h^iX)=0$ for some $n\in\bbZ$ and all $i<n$. Then 
the morphism
$$
\tau^{\leq n}X\lra (h^nX)[-n] 
$$
admits a section, which is unique. Together with $\tau^{\leq n-1}X\lra \tau^{\leq n}X$, it defines an isomorphism
$$
\alpha_{\tau^{\leq n}X}:  (h^nX)[-n]\oplus\tau^{\leq n-1}X \stackrel{\sim}{\lra} \tau^{\leq n}X.
$$
Also, the morphism 
$$
\tau^{\leq n-1}X \lra \tau^{\leq n}X
$$ 
admits a retraction, which is unique. Together with $\tau^{\leq n}X \lra (h^nX)[-n]$, it defines an isomorphism
$$
\beta_{ \tau^{\leq n}X}:\tau^{\leq n}X \stackrel{\sim}{\lra} \tau^{\leq n-1}X\oplus (h^nX)[-n].
$$
The composition
\begin{center}
\begin{tikzcd}[column sep = 10ex]
    (h^nX)[-n]\oplus\tau^{\leq n-1}X \arrow[r, "\alpha_{\tau^{\leq n}X}" , "\sim"']
    & \tau^{\leq n}X \arrow[r, "\beta_{\tau^{\leq n}X}" , "\sim"']
    & (h^nX)[-n]\oplus\tau^{\leq n-1}X
\end{tikzcd}
\end{center}
is the identity, i.e. $\alpha_{\tau^{\leq n}X}$ and $\beta_{\tau^{\leq n}X}$ are inverse one to the other.
\end{Lem*}

\begin{Cor*}
Let $X\in\nD^{\rm b}(\cC)$ such that $\Ext_{\cC}^{n-i}(h^nX,h^iX)=\Ext_{\cC}^{n-i+1}(h^nX,h^iX)=0$ for all $n,i\in\bbZ$ with $i<n$. Then the family $\{\alpha_{\tau^{\leq n}X}\}_{n\in\bbZ}$ assemble to an isomorphism
$$
\alpha_{\leq,X}:\bigoplus_n (h^nX)[-n] \stackrel{\sim}{\lra} X,
$$
the family $\{\beta_{\tau^{\leq n}X}\}_{n\in\bbZ}$ assemble to an isomorphism
$$
\beta_{\leq,X}: X \stackrel{\sim}{\lra} \bigoplus_n (h^nX)[-n],
$$
and $\alpha_{\leq,X}$ and $\beta_{\leq,X}$ are inverse one to the other.
\end{Cor*}



\begin{Lem*}\label{comp-geq-leq}
Let $X\in \nD^{\rm b}(\cC)$. Assume $\Ext_{\cC}^{b-a}(h^bX,h^aX)=\Ext_{\cC}^{b-a+1}(h^bX,h^aX)=0$ for all $a,b\in\bbZ$ with $a<b$. Then 
$$
\alpha_{\geq,X}=\alpha_{\leq,X}\quad\textrm{and}\quad\beta_{\geq,X}=\beta_{\leq,X}.
$$
\end{Lem*}

\begin{proof}
Fix $n\in\bbZ$, and consider the commutative diagram of canonical morphisms
\begin{center}
\begin{tikzcd}[column sep=2.5em, row sep=2.5em]
    & X \arrow[dr, "q"] & \\
    \tau^{\leq n}X \arrow[ur, "i"] \arrow[dr, "p"']
    & & \tau^{\geq n}X \\
    & (h^nX)[-n] \arrow[ur, "j"'] &
\end{tikzcd}
\end{center}
The morphism $q$ is split by the composition of sections 
$$
s: \tau^{\geq n}X \lra \tau^{\geq n-1}X \lra \tau^{\geq n - 2}X \lra \ldots \lra  \tau^{\geq m}X = X
$$ 
(for $m \ll 0$) from Lemma \ref{split-lemma-a}, and by definition $\alpha_{\geq,X}|_{(h^nX)[-n]}:(h^nX)[-n]\lra X$ is $s\circ j$. Since $(h^nX)[-n]$ is concentrated in degrees $\leq n$, the latter factors uniquely through $i$, i.e., there exists a unique $f:(h^nX)[-n]\lra \tau^{\leq n}X$ such that 
$\alpha_{\geq,X}|_{(h^nX)[-n]}=i\circ f$. Then:
$$
j\circ p \circ f = q\circ i\circ f = q \circ s\circ j =j.
$$
But $j$ admits a retraction by Lemma \ref{split-lemma-a}, hence $p\circ f=\Id_{(h^nX)[-n]}$. Consequently, the morphism $f$ has to coincide with the unique section of $p$ from Lemma \ref{split-lemma-b}. Then by definition $\alpha_{\leq,X}|_{(h^nX)[-n]}:(h^nX)[-n]\lra X$ is $i\circ f$. We have thus obtained that 
$\alpha_{\geq,X}$ and $\alpha_{\leq,X}$ agree on the direct summand $(h^nX)[-n]$.

It follows that $\alpha_{\geq,X}=\alpha_{\leq,X}$, and hence that $\beta_{\geq,X}=\beta_{\leq,X}$ by passing to the inverse.
\end{proof}

\begin{Pt*}
In the situation of Lemma \ref{comp-geq-leq}, we set
$$
\alpha_X:=\alpha_{\geq,X}=\alpha_{\leq,X}\quad\textrm{and}\quad\beta_X:=\beta_{\geq,X}=\beta_{\leq,X}.
$$
They are isomorphisms, which are inverse one to the other.
\end{Pt*}

\begin{Lem*}\label{decomp-can}
Let $X\in \nD^{\rm b}(\cC)$. Assume $\Ext_{\cC}^i(h^mX,h^nX)=0$ for all $i,m,n\in\bbZ$ with $m\neq n$; in particular
$$
X\cong \bigoplus_{n \in \bbZ}(h^nX)[-n]
$$
using $\alpha_X=\beta_X^{-1}$. Then
$$
\RHom_{\cC}(X,X)\cong \bigoplus_{m,n\in \bbZ}\RHom_{\cC}(h^mX,h^nX)[m-n] \cong \bigoplus_{n\in \bbZ}\RHom_{\cC}(h^nX,h^nX)
$$
in the derived category of abelian groups. Moreover, the induced isomorphism 
$$
\Ext_{\cC}^{\bullet}(X,X)\cong \bigoplus_{n\in \bbZ}\Ext_{\cC}^{\bullet}(h^nX,h^nX)
$$
in the category of graded abelian groups is an isomorphism of graded rings for the Yoneda product.
\end{Lem*}

\begin{proof}
This is clear.
\end{proof}

%

\bibliographystyle{amsalpha}
\bibliography{refs}

\end{document}